\let\oldmarginpar\marginpar
\renewcommand\marginpar[1]
\newcommand{\la}{\langle}
\newcommand{\ra}{\rangle}
\newtheorem{theorem}{\bf Theorem}[section]
\newtheorem{lemma}[theorem]{\bf Lemma}
\newtheorem{prop}[theorem]{\bf Proposition}
\newtheorem{corollary}[theorem]{\bf Corollary}
\newcommand{\CC}{{\Bbb C}}
\newcommand{\CP}{{\Bbb CP}}
\newcommand{\FF}{{\Bbb F}}
\newcommand{\NN}{{\Bbb N}}
\newcommand{\RR}{{\Bbb R}}
\newcommand{\ZZ}{{\Bbb Z}}
\newcommand{\ggreat}{>\kern-.7ex>}
\newcommand{\ssmall}{<\kern-.7ex<}
\newcommand{\qu}{/\kern-.7ex/}
\newcommand{\exh}{\to\kern-1.8ex\to}
\newcommand{\cC}{{\EuScript{C}}}
\newcommand{\dD}{{\EuScript{D}}}
\newcommand{\fF}{{\EuScript{F}}}
\newcommand{\gG}{{\EuScript{G}}}
\newcommand{\jJ}{{\EuScript{J}}}
\newcommand{\lL}{{\EuScript{L}}}
\newcommand{\mM}{{\EuScript{M}}}
\newcommand{\oO}{{\EuScript{O}}}
\newcommand{\tT}{{\EuScript{T}}}
\newcommand{\xX}{{\EuScript{X}}}
\newcommand{\GL}{\operatorname{GL}}
\newcommand{\ab}{\operatorname{ab}}
\newcommand{\Aut}{\operatorname{Aut}}
\newcommand{\cyc}{\operatorname{cyc}}
\newcommand{\Diff}{\operatorname{Diff}}
\newcommand{\GCD}{\operatorname{GCD}}
\newcommand{\Hom}{\operatorname{Hom}}
\newcommand{\Id}{\operatorname{Id}}
\newcommand{\Int}{\operatorname{Int}}
\newcommand{\Ker}{\operatorname{Ker}}
\newcommand{\Lie}{\operatorname{Lie}}
\newcommand{\Mat}{\operatorname{Mat}}
\newcommand{\orb}{\operatorname{orb}}
\renewcommand{\O}{\operatorname{O}}
\newcommand{\Out}{\operatorname{Out}}
\newcommand{\SL}{\operatorname{SL}}
\newcommand{\PSL}{\operatorname{PSL}}
\newcommand{\SO}{\operatorname{SO}}
\newcommand{\Symp}{\operatorname{Symp}}
\renewcommand{\vert}{\operatorname{ver}}
\newcommand{\ov}{\overline}
\newcommand{\ord}{\operatorname{ord}}
\newcommand{\wt}{\widetilde}
\newcommand{\imag}{{\mathbf i}}
\title[Finite groups acting symplectically on $T^2\times S^2$]
{Finite groups acting symplectically on $T^2\times S^2$}
\author{Ignasi Mundet i Riera}
\address{Departament d'\`Algebra i Geometria\\
Facultat de Matem\`atiques\\
Universitat de Barcelona\\
Gran Via de les Corts Catalanes 585\\
08007 Barcelona \\
Spain}
\email{ignasi.mundet@ub.edu}
\date{\today}
\subjclass[2010]{57S17,53D05}
\thanks{This work has been partially supported by the (Spanish) MEC Project MTM2012-38122-C03-02.}
\begin{document}

\maketitle

\begin{abstract}
For any symplectic form $\omega$ on $T^2\times S^2$ we construct
infinitely many nonisomorphic finite groups which admit
effective smooth actions on $T^2\times S^2$ that are trivial in
cohomology but which do not admit any effective symplectic
action on $(T^2\times S^2,\omega)$. We also prove that for any
$\omega$ there is another symplectic form $\omega'$ on
$T^2\times S^2$ and a finite group acting symplectically and
effectively on $(T^2\times S^2,\omega')$ which does not admit
any effective symplectic action on $(T^2\times S^2,\omega)$.

A basic ingredient in our arguments is the study of the Jordan
property of the symplectomorphism groups of $T^2\times S^2$. A
group $G$ is Jordan if there exists a constant $C$ such that
any finite subgroup $\Gamma$ of $G$ contains an abelian
subgroup whose index in $\Gamma$ is at most $C$. Csik\'os,
Pyber and Szab\'o proved recently that the diffeomorphism group
of $T^2\times S^2$ is not Jordan. We prove that, in contrast,
for any symplectic form $\omega$ on $T^2\times S^2$ the group
of symplectomorphisms $\Symp(T^2\times S^2,\omega)$ is Jordan.
We also give upper and lower bounds for the optimal value of
the constant $C$ in Jordan's property for $\Symp(T^2\times
S^2,\omega)$ depending on the cohomology class represented by
$\omega$. Our bounds are sharp for a large class of symplectic
forms on $T^2\times S^2$.
\end{abstract}

\section{Introduction}

\subsection{}
In this paper we study effective symplectic finite group
actions or, equivalently, finite subgroups of symplectomorphism
groups. Despite the extraordinary development of symplectic
geometry in the last three decades, the interactions between
finite transformation groups and symplectic geometry seems to
be so far a mostly unexplored terrain (with the remarkable
exceptions of \cite{C,CK1,CK2}).

The following notation will be useful in our discussion: for
any group $\gG$ we denote by $\fF(\gG)$ the set of all
isomorphism classes of finite subgroups of $\gG$.
Given a symplectic manifold $(X,\omega)$ we denote by $\Diff_{[\omega]}(X)$
the group of diffeomorphisms of $X$ which preserve the de Rham cohomology class represented
by $\omega$. We have inclusions
$$\fF(\Symp(X,\omega))\subseteq \fF(\Diff_{[\omega]}(X))\subseteq \fF(\Diff(X))$$
induced by the inclusions of the groups.
%
%
A basic question which apparently has not received attention is
the following: given a symplectic manifold $(X,\omega)$, how big can
the difference between $\fF(\Diff_{[\omega]}(X))$ and $\fF(\Symp(X,\omega))$ be?
Similarly, one may want to compare $\fF(\Symp(X,\omega))$ and $\fF(\Symp(X,\omega'))$ for
different symplectic structures $\omega,\omega'$.

If $\Sigma$ is a closed, connected and orientable surface, then
for any symplectic form $\omega$ on $\Sigma$ we have
$\fF(\Symp(\Sigma,\omega))=\fF(\Diff_{[\omega]}(\Sigma))=\fF(\Diff^+(\Sigma))$,
where $\Diff^+$ refers to orientation preserving
diffeomorphisms. To prove this claim, let us fix some
symplectic form $\omega$ on $\Sigma$.
Given a finite subgroup $\Gamma\subset\Diff^+(\Sigma)$ one may
take, by the averaging trick, a $\Gamma$-invariant Riemannian
metric $g$ on $\Sigma$; the volume form $\omega_g$ associated
to $g$ and the orientation given by $\omega$ is
$\Gamma$-invariant, and so is any constant multiple of
$\omega_g$. For some $\lambda\in\RR_{>0}$ we have an equality
of cohomology classes $[\lambda\omega_g]=[\omega]$ and by
Moser's stability there is a diffeomorphism
$\phi\in\Diff(\Sigma)$ such that
$\phi^*(\lambda\omega_g)=\omega$ (see e.g. Exercise 3.21 in
\cite{MS}). Conjugating the action of $\Gamma$ by $\phi$ we
obtain an action of $\Gamma$ which fixes $\omega$.

We will show in this paper that in higher dimensions the
situation becomes much more interesting. We will study in
detail $\fF(\Symp(T^2\times S^2,\omega))$ for every symplectic
form $\omega$ on $T^2\times S^2$, and we will prove that for
every $\omega$ the difference
$$\fF(\Diff_{[\omega]}(T^2\times
S^2))\setminus \fF(\Symp(T^2\times S^2,\omega))$$
contains infinitely many elements. Hence, there is an infinite
sequence of pairwise nonisomorphic finite groups
$G_1,G_2,\dots$ such that each $G_j$ acts smoothly and
effectively on $T^2\times S^2$ but, in contrast, there is no
effective symplectic action of $G_j$ on $(T^2\times
S^2,\omega)$. We will also prove that for any symplectic form
$\omega$ there exists another symplectic form $\omega'$ such
that
$$\fF(\Symp(T^2\times S^2,\omega'))\nsubseteq
\fF(\Symp(T^2\times S^2,\omega)),$$ i.e., there exists some
finite group $G$ which admits an effective symplectic action on
$(T^2\times S^2,\omega')$ but no such action on $(T^2\times
S^2,\omega)$.

A related question which we do not answer in this paper is
whether there exists some finite subgroup
$G\subset\Diff^+(T^2\times S^2)$ which does not admit effective
symplectic actions on $(T^2\times S^2,\omega)$ for any choice
of $\omega$ (this question is closely related to the results in
\cite{C,CK1,CK2}).

By a theorem of Lalonde and McDuff (see Theorem \ref{thm:LM}
below) the symplectic forms on $T^2\times S^2$ are classified
up to isomorphism by the ratio $\lambda$ between the volumes of
the $T^2$ factor and the $S^2$ factor. The theorems proved in
this paper imply that one can break the set $(0,\infty)$ of
possible values of $\lambda$ in infinitely many intervals of
the form $(a,b]$ so that if two choices of $\lambda$ belong to
different intervals then the corresponding symplectomorphism
groups have different families of isomorphism classes of finite
subgroups. From this perspective, our results are reminiscent
of those of Abreu and McDuff \cite{AM} on the rational homotopy
type of the symplectomorphism groups of $S^2\times S^2$.

Note that the theorem of Lalonde and McDuff implies that
$\fF(\Symp(T^2\times S^2,\omega))$ contains infinitely many
elements for every $\omega$.
In fact, for any $\omega$ and any $n$ there exists a subgroup
of $\Symp(T^2\times S^2,\omega)$ whose cardinal is $n$
(see the remarks after Theorem \ref{thm:LM}). Hence, any argument ruling
out the possibility that some finite group acts effectively and symplectically
on $\Symp(T^2\times S^2,\omega)$ must take into account more refined information
than the cardinal of the group.
The strategy we use in this paper to find obstructions for a finite
group $\Gamma$ to be isomorphic to a subgroup of
$\Symp(T^2\times S^2,\omega)$ is based on the notion of Jordan
group, which we next explain.

\subsection{Jordan groups}

A group $G$ is said to be Jordan \cite{Po0} if there is some
constant $C$ such that any finite subgroup $\Gamma$ of $G$
contains an abelian subgroup whose index in $\Gamma$ is at most
$C$. The terminology comes from a classic theorem of Camille
Jordan, which states that $\GL(n,\CC)$ is Jordan for every $n$
(see \cite{J} and \cite{B,CR} for modern presentations). A
number of papers have appeared in the last few years studying
whether the automorphism groups of different geometric
structures are Jordan or not: these include diffeomorphism
groups, groups of birational transformations of algebraic
varieties, or automorphism groups of algebraic varieties (see
\cite{Po2} for a survey).

Around twenty years ago, \'Etienne Ghys asked whether the
diffeomorphism group of any smooth compact manifold is Jordan
(see Question 13.1 in \cite{F}, and \cite{M2}).
This question has been answered affirmatively in a number of
cases (see the introduction and references in \cite{M2}).
For example, if $X$ is a smooth compact manifold with nonzero Euler
characteristic, then $\Diff(X)$ is Jordan (see \cite{M2} for a proof
in dimensions $2$ and $4$ and \cite{M3} for a proof in arbitrary dimensions
using the classification of finite simple groups).
However, Csik\'os, Pyber and Szab\'o \cite{CPS} proved recently
that the diffeomorphism group of $T^2\times S^2$ is not Jordan, thus
giving the first example of a compact manifold for which Ghys's question has
a negative answer
(see \cite{M4} for more examples). In contrast, in this paper
we prove that for any symplectic form $\omega$ on $T^2\times
S^2$ the group of symplectomorphisms $\Symp(T^2\times
S^2,\omega)$ is Jordan. Furthermore, we relate the constant in
Jordan property to the cohomology class represented by
$\omega$.

Consequently, from the perspective of Jordan property
$\fF(\Symp(T^2\times S^2,\omega))$ is {\it qualitatively}
smaller than $\fF(\Diff_{[\omega]}(T^2\times S^2))$ (the group actions
defined in \cite{CPS} are trivial in cohomology, so for any symplectic
form $\omega$ they give finite subgroups of $\Diff_{[\omega]}(T^2\times S^2)$).

To state our results with more precision we need to introduce some notation. Fix
orientations on $T^2$ and $S^2$ and choose elements $t\in T^2$
and $s\in S^2$. Define for any symplectic form $\omega$ on
$T^2\times S^2$
$$\alpha(\omega)=\int_{T^2\times\{s\}}\omega,\qquad
\beta(\omega)=\int_{\{t\}\times S^2}\omega.$$
The numbers $\alpha(\omega)$ and $\beta(\omega)$ are independent of $s$ and $t$ by Stokes' theorem.
Since $\omega$ is a symplectic form, both $\alpha(\omega)$ and $\beta(\omega)$
are nonzero. Define
$$\lambda(\omega)=\max \left\{\left(2\ZZ\cap \left(-\infty,\left|\frac{2\alpha(\omega)}{\beta(\omega)}\right|\right)\right)\cup\{1\}\right\}.$$
In words, $\lambda(\omega)$ is the biggest even integer smaller than $|2\alpha(\omega)/\beta(\omega)|$
if $|\alpha(\omega)/\beta(\omega)|>1$, and $\lambda(\omega)=1$ otherwise.

\begin{theorem}
\label{thm:main-1} Let $\omega$ be a symplectic form on
$T^2\times S^2$. Any finite subgroup $\Gamma\subset
\Symp(T^2\times S^2,\omega)$ contains an abelian subgroup
$A\subseteq\Gamma$ such that
$$[\Gamma:A]\leq \max\{144,6\lambda(\omega)\}.$$
\end{theorem}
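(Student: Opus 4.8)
The plan is to filter $\Gamma$ by two normal subgroups adapted to the two obvious structures on $X=T^2\times S^2$: the action on cohomology and the ruling by the $S^2$-fibers.

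\emph{Step 1: reduction to the homologically trivial case.} First I would analyze the action of $\Gamma$ on $H^*(X;\ZZ)$. Writing $u,v$ for the pullbacks of the generators of $H^2(T^2)$ and $H^2(S^2)$, one has $H^2(X)=\ZZ u\oplus\ZZ v$ with $u^2=v^2=0$ and $uv$ a generator of $H^4$; moreover, if $e_1,e_2$ generate $H^1(X)$, then $u=e_1\cup e_2$ lies in the image of $\wedge^2 H^1(X)$ while $v$ does not. Hence any cohomology automorphism induced by a symplectomorphism fixes $\ZZ u$, and since it must preserve the class $[\omega]=\alpha(\omega)u+\beta(\omega)v$ with $\alpha(\omega),\beta(\omega)\neq 0$, it acts trivially on $H^2(X)$. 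Preservation of $u=e_1\cup e_2$ and of the orientation class $uv$ then forces the induced automorphism of $H^1(X)\cong\ZZ^2$ to have determinant $+1$, i.e. to land in $\SL(2,\ZZ)$. As every finite subgroup of $\SL(2,\ZZ)$ is cyclic of order at most $6$, the kernel $\Gamma_1$ of $\Gamma\to\Aut H^*(X;\ZZ)$ satisfies $[\Gamma:\Gamma_1]\le 6$ and acts trivially on all of $H^*(X;\ZZ)$. It therefore suffices to produce an abelian $A\subseteq\Gamma_1$ with $[\Gamma_1:A]\le\max\{24,\lambda(\omega)\}$.

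\emph{Step 2: the invariant ruling.} Averaging over $\Gamma_1$ I would fix a $\Gamma_1$-invariant $\omega$-compatible almost complex structure $J$. The fiber class $F=[\{t\}\times S^2]$ satisfies $F\cdot F=0$, $c_1(F)=2$ and $\int_F\omega=\beta(\omega)>0$, so by the structure theory of $J$-holomorphic spheres on symplectic ruled surfaces \cite{MS} the $J$-holomorphic curves in class $F$ foliate $X$ and realize it as an $S^2$-bundle $p:X\to T^2$. Because $F$ is the unique class with these properties and $\Gamma_1$ acts trivially on homology, this foliation is $\Gamma_1$-invariant and $p$ is $\Gamma_1$-equivariant for an induced action on the base. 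Since $p$ has simply connected fibers, $p^*$ identifies $H^1(T^2)$ with $H^1(X)$, on which $\Gamma_1$ acts trivially; a finite group acting on a torus trivially on $H^1$ is conjugate to a group of translations, so the image $Q$ of $\Gamma_1\to\Diff^+(T^2)$ is abelian. Let $\Gamma_2\vt\Gamma_1$ be the kernel, consisting of the symplectomorphisms preserving every fiber; then $\Gamma_1/\Gamma_2\cong Q$ is abelian.

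\emph{Step 3: faithfulness on a fiber.} I would next show $\Gamma_2$ embeds into $\SO(3)$. Fix a fiber $S^2_{b_0}$ and suppose $g\in\Gamma_2$ acts trivially on it. Since $p\circ g=p$ and $g$ fixes $S^2_{b_0}$ pointwise, $dg$ is the identity on $TS^2_{b_0}$ and on the normal (base) direction; being of finite order it is unipotent, hence $dg=\mathrm{id}$ along $S^2_{b_0}$, and then $X^g$ contains an open neighborhood of the fiber, forcing $g=\mathrm{id}$ on the connected manifold $X$. Thus $\Gamma_2\to\Symp(S^2_{b_0},\omega|_{S^2_{b_0}})$ is injective, and since every finite group of area-preserving diffeomorphisms of $S^2$ is conjugate into $\SO(3)$, $\Gamma_2$ is isomorphic to a finite subgroup of $\SO(3)$. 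In particular $\Gamma_2$ contains an abelian (cyclic) subgroup of index at most $12$.

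\emph{Step 4: bounding the extension --- the main difficulty.} The remaining and hardest point is to control the interaction between the fiberwise rotations in $\Gamma_2$ and the base translations $Q$, and this is where $\lambda(\omega)$ enters. When $\Gamma_2$ is cyclic or dihedral, its rotation part fixes two sections of $p$; being fixed loci of finite-order symplectomorphisms these are symplectic tori, necessarily in classes $B+kF$ and $B-kF$ (with $B=[T^2\times\{s\}]$) since $\Gamma_2$ is homologically trivial and they are disjoint, so their positive areas $\alpha(\omega)\pm k\beta(\omega)>0$ force $|k|<|\alpha(\omega)/\beta(\omega)|$. This same integer $k$ measures the twisting of the rotation axis as one moves around $T^2$, hence the failure of a rotation to commute with a translation; pushing this estimate through the extension $1\to\Gamma_2\to\Gamma_1\to Q\to 1$ should bound the index of the abelian subgroup generated by the commuting rotations and translations by a quantity of the order of $2|\alpha(\omega)/\beta(\omega)|$, rounded down to the even integer $\lambda(\omega)$. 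When instead $\Gamma_2$ is one of the exceptional groups $A_4,S_4,A_5$, its order and outer automorphism group are bounded, so the extension contributes only the universal constant $24$. Combining the two cases gives $[\Gamma_1:A]\le\max\{24,\lambda(\omega)\}$, and with Step 1 the claimed bound $[\Gamma:A]\le\max\{144,6\lambda(\omega)\}$. I expect the delicate part to be exactly this quantitative passage from the area constraint $|k|<|\alpha(\omega)/\beta(\omega)|$ to the sharp even bound $\lambda(\omega)$, together with checking that the estimate is uniform over the base $T^2$.
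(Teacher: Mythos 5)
Your Steps 1--3 are essentially correct and run parallel to the paper's own reduction: an invariant compatible almost complex structure and the moduli of $J$-holomorphic spheres in the fiber class give a $\Gamma$-equivariant ruling $p:X\to T^2$ (this is the paper's Proposition \ref{prop:equivariant-fibration}), the base image is abelian, and the fiber-preserving kernel embeds in a finite subgroup of $\SO(3,\RR)$; your use of the area constraint $\alpha(\omega)\pm k\beta(\omega)>0$ on invariant fixed surfaces to bound the twisting integer $k$ is also exactly how $\lambda(\omega)$ enters the paper's proof. Two smaller inaccuracies in Step 4: the fixed locus of the rotation subgroup need not consist of two sections --- it can be a \emph{connected} double cover of $T^2$ (as happens for the groups constructed in Section \ref{s:proof-thm:main-2}), so that case must be treated as well, though it is the easy one since there the normal degree vanishes; and in the dihedral case your accounting pays both the factor $2$ (for the subgroup preserving each section) and the factor $\lambda(\omega)$, which overshoots $6\lambda(\omega)$ --- the paper avoids this by observing that whenever some element swaps the two sections, the two normal degrees are isomorphic and sum to zero, hence both vanish.

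The genuine gap is the step you yourself flag as ``the delicate part'': converting the bound $|k|<|\alpha(\omega)/\beta(\omega)|$ into a bound on the index of an abelian subgroup of $\Gamma_1$. This is not a routine passage; it is the technical core of the whole theorem, and nothing in your sketch supplies the mechanism. What is required is the statement: if a finite group acts effectively by bundle automorphisms on a complex line bundle $L\to T^2$ whose induced base action is free, abelian, generated by two elements and trivial on $H^1(T^2;\ZZ)$, then it contains an abelian subgroup of index at most $\max\{1,|\deg L|\}$ --- applied here to $L=N$, the normal bundle of an invariant fixed section, with $\deg N=2k$. The paper proves this as Proposition \ref{prop:accions-fibrats-linia}, and the proof needs real work: the extension $1\to\Gamma_0\to\Gamma\to\Gamma_B\to 1$ is central and carries a commutator pairing $Q:\Gamma_B\times\Gamma_B\to\Gamma_0$ (Lemma \ref{lemma:propietats-Q}); one must show that $\deg L\neq 0$ whenever the action is nonabelian (Lemma \ref{lemma:grau-no-zero}, via a cocycle and homotopy argument), that $|\Gamma_B|$ divides $|[\Gamma,\Gamma]|\cdot\deg L$ (Lemma \ref{lemma:grau-divisible}, by twisting the action so that $L^{\otimes d_c}$ descends to the quotient torus), and that $|[\Gamma,\Gamma]|^2\le|\Gamma_B|$ (Lemma \ref{lemma:cota-Gamma-B}); these combine to give $|\Gamma_B|\le(\deg L)^2$, hence a cyclic subgroup of $\Gamma_B$ of index at most $|\deg L|$ whose preimage is abelian. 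Without this (or an equivalent argument), your Step 4 is an expectation rather than a proof, and the bound $\max\{144,6\lambda(\omega)\}$ does not follow.
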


The next theorem shows that
the bound in Theorem
\ref{thm:main-1} is optimal if $6\lambda(\omega)\geq 144$.

\begin{theorem}
\label{thm:main-2} Let $\omega$ be a symplectic form on
$T^2\times S^2$ such that $\lambda(\omega)\geq 8$. There exists
a finite subgroup $\Gamma\subset\Symp(T^2\times S^2,\omega)$
all of whose abelian subgroups $A\subseteq\Gamma$ satisfy
$[\Gamma:A]\geq 6\lambda(\omega)$. Furthermore, the action of
$\Gamma$ on the cohomology of $T^2\times S^2$ is trivial.
\end{theorem}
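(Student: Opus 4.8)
The plan is to realize $\Gamma$ as a group of symplectomorphisms of a ruled surface diffeomorphic to $T^2\times S^2$, built around a finite Heisenberg group. Set $d=\lambda(\omega)$, which by hypothesis is an even integer $\ge 8$. Choose a complex torus $E=\CC/\Lambda$ and a holomorphic line bundle $\lL\to E$ of degree $d$ carrying a Hermitian metric of constant curvature, and form the ruled surface $X=\PP(\oO\oplus\lL)$. Because $d$ is even, the two holomorphic sections have even self-intersection $\pm d$, so $X$ is diffeomorphic to the trivial $S^2$-bundle $T^2\times S^2$ (for odd $d$ one would instead obtain the nontrivial bundle, which is why the even-integer normalization enters the definition of $\lambda$). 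The core of $\Gamma$ is the finite Heisenberg group $H_d$, the theta group of $\lL$, which sits in a central extension $1\to\ZZ/d\to H_d\to(\ZZ/d)^2\to 1$ of order $d^3$ acting on $X$ by bundle automorphisms: the quotient $(\ZZ/d)^2$ covers the $d$-torsion translations of $E$, the center $\ZZ/d$ acts by rotations of the $\CP^1$-fibers, and the noncommutativity records the degree $d$. Since translations of $E$ and fiber rotations are each isotopic to the identity, every element of $H_d$ acts trivially on $H^*(X)$.

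Second, I must pin down the symplectic class. Averaging produces an $H_d$-invariant Kähler form $\omega_0$ on $X$, and I compute its invariants through the Kähler cone. Writing the fiber class $F$ and the negative section $S_-$ (with $S_-^2=-d$), a class $aF+bS_-$ is Kähler exactly when $b>0$ and $a>bd$; expressing the product section as $[T^2\times\{s\}]=S_-+\tfrac{d}{2}F$ one finds $\beta(\omega_0)=b$ and $\alpha(\omega_0)=a-\tfrac{d}{2}b$, so the attainable ratios are precisely $2\alpha/\beta\in(d,\infty)$. On the other hand $\lambda(\omega)=d$ forces $|2\alpha(\omega)/\beta(\omega)|\in(d,d+2]\subset(d,\infty)$, so I can choose the Kähler class making $|\alpha(\omega_0)/\beta(\omega_0)|$ equal to $|\alpha(\omega)/\beta(\omega)|$ exactly. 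By the theorem of Lalonde and McDuff (Theorem \ref{thm:LM}) $\omega_0$ and $\omega$ are then diffeomorphic, and after rescaling their classes agree, so Moser's stability yields a diffeomorphism conjugating the construction into $\Symp(T^2\times S^2,\omega)$.

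Third, and this is where the factor $6$ and the triviality in cohomology must be reconciled, I enlarge $H_d$ by a cyclic group of order $6$. Algebraically I want $\Gamma=H_d\rtimes C_6$, with a generator acting on $H_d/Z\cong(\ZZ/d)^2$ by the order-$6$ matrix $M=\left(\begin{smallmatrix}1&-1\\1&0\end{smallmatrix}\right)\in\SL(2,\ZZ)$; since its characteristic polynomial $x^2-x+1$ does not vanish at $1$, the matrix $M-\Id$ is invertible modulo $d$, so $M$ has no nonzero fixed vector in $(\ZZ/d)^2$. The subtlety is that the \emph{obvious} realization of $M$, namely the order-$6$ automorphism of a hexagonal $E$, acts by $M$ on $H^1(T^2)$ and is therefore not trivial in cohomology; and any symplectomorphism covering a translation induces the trivial automorphism of the translation torsion, so it cannot realize $M$ either. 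The resolution is to realize $C_6$ by a symplectomorphism $\psi$ that normalizes $H_d$ and induces $M$ on $H_d/Z$ but which does \emph{not} preserve the projection $X\to E$; only by breaking this fibration can $\psi$ act nontrivially on the translation torsion while acting as the identity on $H^*(X)=H^*(T^2\times S^2)$. Constructing such a $\psi$ of exact order $6$, compatible with the invariant symplectic form, is the main obstacle; I expect to produce it from the hexagonal symmetry of $E$ corrected by a fiberwise modification that destroys the bundle structure, verifying finiteness and cohomological triviality by hand.

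Finally the index bound is a routine group-theoretic computation once $\Gamma=H_d\rtimes C_6$ is in hand. The center of $H_d$ is $\ZZ/d$ and its maximal abelian subgroups are the preimages of Lagrangian lines in $(\ZZ/d)^2$, of order $d^2$. Because the adjoined $C_6$ acts without nonzero fixed vectors, any element projecting nontrivially to $C_6$ centralizes only the center together with its own powers and hence cannot enlarge an abelian subgroup past order $d^2$; thus every abelian $A\subseteq\Gamma$ satisfies $|A|\le d^2$ and $[\Gamma:A]\ge|\Gamma|/d^2=6d^3/d^2=6d=6\lambda(\omega)$. Triviality in cohomology then holds because $H_d$ acts through isotopically trivial maps and $\psi$ is arranged to act trivially on $H^*$, which completes the proof.
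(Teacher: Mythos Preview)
Your first two paragraphs --- the Heisenberg group $H_d$ acting on the projectivized degree-$d$ line bundle, the computation of the attainable symplectic classes, and the appeal to Lalonde--McDuff --- are correct and essentially coincide with the paper's construction (the paper phrases it via the nilmanifold $M_n=T(\ZZ,n\ZZ)\backslash T(\RR,\RR)$ and the minimal coupling form rather than K\"ahler geometry, but these amount to the same thing). The problems are in the last two steps.

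\textbf{The $C_6$ is never constructed.} You declare the realization of $\psi$ to be ``the main obstacle'' and leave it undone. The paper carries this out concretely and, contrary to your expectation, the map \emph{does} preserve the fibration: one writes down an explicit order-$6$ group automorphism of the real Heisenberg group,
\[
h\bigl(A(x,y,z)\bigr)=A\!\left(-y,\;x+y,\;z-xy-\tfrac12 y^2\right),
\]
checks $h^6=\Id$, and observes that for even $n$ it preserves $T(\ZZ,n\ZZ)$ and hence descends to a principal-bundle automorphism $h_n$ of $M_n$ covering the hexagonal rotation of the base torus. The group $\widehat\Gamma_n=\langle\Gamma_n,h_n\rangle$ then acts on $P_n\times_{S^1}S^2$ preserving the invariant symplectic form. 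Your worry that such a fibration-preserving $h_n$ acts nontrivially on $H^1(T^2\times S^2)$ is in fact well-founded; the paper's Lemma~\ref{lemma:symp-hom-trivial} only argues triviality on $H_2$ (hence $H^2$), which is what is actually used downstream for $\Diff_{[\omega]}$ in Corollary~\ref{cor:grups-que-no-actuen}. So the $H^1$ issue is a wrinkle in the paper's statement, not an obstruction to building the group action, and you should not let it block the construction.

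\textbf{The index bound.} Your assertion that every nontrivial power of $M$ has no nonzero fixed vector in $(\ZZ/d)^2$ is false: since $d$ is even, $M^3=-\Id$ fixes the $2$-torsion $\{0,d/2\}^2$, so an abelian subgroup whose image in $C_6$ has order $2$ can meet $H_d$ in a subgroup of order $4d$, giving $|A|\le 8d$ rather than the $|A|\le 6\cdot d$ your argument would yield. (Likewise $\det(M^2-\Id)=3$, producing extra fixed vectors when $3\mid d$.) The paper's Lemma~\ref{lemma:abelian-subgroups-wh-Gamma} handles these cases separately, and it is precisely the inequality $6d^3/(8d)=\tfrac34 d^2\ge 6d$ in the order-$2$ case that forces the hypothesis $d\ge 8$. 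Your conclusion $[\Gamma:A]\ge 6d$ is correct, but the reasoning you give does not establish it.
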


Combining Theorems \ref{thm:main-1} and \ref{thm:main-2} we immediately obtain:

\begin{corollary}
\label{cor:grups-que-no-actuen} For any symplectic form
$\omega$ on $T^2\times S^2$ the difference
$$\fF(\Diff_{[\omega]}(T^2\times S^2))\setminus
\fF(\Symp(T^2\times S^2,\omega))$$ contains infinitely many
elements; more precisely, there are infinitely many nonisomorphic finite
groups which admit smooth effective actions on $T^2\times S^2$
that are trivial in cohomology but which are not isomorphic to
any subgroup of $\Symp(T^2\times S^2,\omega)$.

Furthermore, for any symplectic form $\omega$ on $T^2\times
S^2$ there exists another symplectic form $\omega'$ such that
$$\fF(\Symp(T^2\times S^2,\omega'))\nsubseteq
\fF(\Symp(T^2\times S^2,\omega)).$$
\end{corollary}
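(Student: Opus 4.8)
The plan is to deduce both assertions by pitting the lower bound of Theorem~\ref{thm:main-2} against the upper bound of Theorem~\ref{thm:main-1}. First I would isolate the relevant isomorphism invariant: for a finite group $\Gamma$, let $m(\Gamma)$ denote the least index $[\Gamma:A]$ over all abelian subgroups $A\subseteq\Gamma$. With this notation Theorem~\ref{thm:main-1} reads: if $\Gamma\in\fF(\Symp(T^2\times S^2,\omega))$ then $m(\Gamma)\le\max\{144,6\lambda(\omega)\}$; and Theorem~\ref{thm:main-2} supplies, for each symplectic form $\omega'$ with $\lambda(\omega')\ge 8$, a finite group $\Gamma$ with $m(\Gamma)\ge 6\lambda(\omega')$ that acts symplectically on $(T^2\times S^2,\omega')$ and trivially on cohomology.

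Next I would exhibit symplectic forms of arbitrarily large $\lambda$. Fixing area forms $\sigma_T$ on $T^2$ and $\sigma_S$ on $S^2$, the product forms $\omega'_a=\pi_1^*(a\sigma_T)+\pi_2^*\sigma_S$ (for $a>0$) are symplectic, and a one-line computation gives $\alpha(\omega'_a)=a\int_{T^2}\sigma_T$ and $\beta(\omega'_a)=\int_{S^2}\sigma_S$, so that $\lambda(\omega'_a)\to\infty$ as $a\to\infty$ (alternatively, the Lalonde--McDuff classification realizes every ratio). I would then select $a_1<a_2<\cdots$ with $\lambda(\omega'_{a_k})\ge 8$ and $\lambda(\omega'_{a_k})\to\infty$, and let $\Gamma_k$ be the group furnished by Theorem~\ref{thm:main-2} for $\omega'_{a_k}$, so that $m(\Gamma_k)\ge 6\lambda(\omega'_{a_k})$.

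To finish, I would observe that each $\Gamma_k$ acts smoothly, effectively, and trivially on cohomology, hence preserves every de Rham class and so lies in $\fF(\Diff_{[\omega]}(T^2\times S^2))$ for the given $\omega$; whereas for every $k$ with $6\lambda(\omega'_{a_k})>\max\{144,6\lambda(\omega)\}$ Theorem~\ref{thm:main-1} forbids $\Gamma_k\in\fF(\Symp(T^2\times S^2,\omega))$, since $m(\Gamma_k)$ would then exceed the permitted bound. The same $\Gamma_k$ witnesses the second assertion with $\omega'=\omega'_{a_k}$, because $\Gamma_k\in\fF(\Symp(T^2\times S^2,\omega'_{a_k}))\setminus\fF(\Symp(T^2\times S^2,\omega))$. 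The step I expect to need the most care is the passage from this single construction to \emph{infinitely many nonisomorphic} groups: here I would use that $m$ is an isomorphism invariant and that $m(\Gamma_k)\to\infty$, so any fixed isomorphism class can occur for only finitely many $k$; extracting an infinite subsequence of pairwise nonisomorphic $\Gamma_k$ then yields the infinitude required in the first assertion. Everything apart from this extraction is a formal comparison of the two index bounds, and it is precisely the divergence $m(\Gamma_k)\to\infty$ guaranteed by Theorem~\ref{thm:main-2} that makes the argument go through.
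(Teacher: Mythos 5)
Your proposal is correct and is essentially the paper's own argument: the paper derives the corollary directly by "combining Theorems \ref{thm:main-1} and \ref{thm:main-2}", and your write-up simply fills in the routine details of that combination (forms with $\lambda\to\infty$, the invariant $m(\Gamma)$, cohomological triviality giving membership in $\fF(\Diff_{[\omega]})$, and the extraction of infinitely many nonisomorphic groups from $m(\Gamma_k)\to\infty$). All of these steps are sound, so nothing is missing.
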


If we restrict attention to finite $p$-groups for primes $p>3$ then our techniques give the following
sharp result.

\begin{theorem}
\label{thm:main-p}
Let $p>3$ be a prime and let $\omega$ be a symplectic form on $T^2\times S^2$.
The group $\Symp(T^2\times S^2,\omega)$ contains a nonabelian finite $p$-subgroup
if and only if $2p\leq\lambda(\omega)$. Furthermore, if $2p\leq\lambda(\omega)$ then there exists
a subgroup of $\Symp(T^2\times S^2,\omega)$ which is isomorphic to the Heisenberg $p$-group
$$\la X,Y,Z\mid X^p=Y^p=Z^p=[X,Z]=[Y,Z]=1,\,[X,Y]=Z\ra.$$
\end{theorem}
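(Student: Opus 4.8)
The plan is to recast the arithmetic condition $2p\le\lambda(\omega)$ as a symplectic one and then prove each implication by $4$-dimensional symplectic geometry. Since $\lambda(\omega)$ is the largest even integer below $|2\alpha(\omega)/\beta(\omega)|$ while $2p$ is itself even, a direct check shows $2p\le\lambda(\omega)$ if and only if $|\alpha(\omega)|>p\,|\beta(\omega)|$; after fixing orientations I assume $\alpha,\beta>0$ and write this as $\alpha>p\beta$. Let $T=[T^2\times\{s\}]$ and $F=[\{t\}\times S^2]$ be the standard generators of $H_2(T^2\times S^2;\ZZ)$, so that the intersection form is hyperbolic, $T\cdot T=F\cdot F=0$ and $T\cdot F=1$, and $\alpha=\la[\omega],T\ra$, $\beta=\la[\omega],F\ra$.

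For the implication ``$2p\le\lambda(\omega)$ implies a Heisenberg subgroup'', I would realize the manifold as a ruled surface carrying a theta-group action. Fix an elliptic curve $E\cong T^2$ and a line bundle $L\to E$ with $\deg L=p$, and set $X_L=\PP(L^{-1}\oplus L)$; as the two summands differ in degree by the even number $2p$, the bundle $X_L\to E$ is the trivial $S^2$-bundle and $X_L$ is diffeomorphic to $T^2\times S^2$. The theta group of $L$ is a central extension of $K(L)\cong(\ZZ/p)^2$ by scalars whose commutator pairing is the nondegenerate Weil pairing; its canonical finite subgroup is the Heisenberg group $H_p$, and it acts holomorphically on $L$, hence on $X_L$, with $(\ZZ/p)^2$ covering translations of $E$ and the center acting by fiberwise rotations fixing the two sections $\PP(L^{\pm1})$. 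Because $p\ge5$ the induced action on $H^2(X_L)\cong\ZZ^2$ is trivial, so the action is trivial in cohomology. To obtain an invariant symplectic form in the prescribed class I would average: for a Kähler form $\omega_0$ with $[\omega_0]=\alpha T+\beta F$, the form $\omega=|H_p|^{-1}\sum_{g\in H_p}g^*\omega_0$ is again Kähler (an average of holomorphic pullbacks of a Kähler form), is $H_p$-invariant, and lies in the class $\alpha T+\beta F$. The key numerical fact is that the Kähler cone of $X_L$ is exactly $\{\beta>0,\ \alpha>p\beta\}$, the binding constraint being positivity of the area $\alpha-p\beta$ of the negative section $\PP(L^{-1})$; this matches $2p\le\lambda(\omega)$ precisely, so every such class is realized and we get $H_p\hookrightarrow\Symp(X_L,\omega)$. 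Identifying $X_L\cong T^2\times S^2$ and using that $\Symp$ depends only on the ratio $\alpha/\beta$ (Theorem \ref{thm:LM}) yields the desired Heisenberg subgroup of $\Symp(T^2\times S^2,\omega)$.

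For the converse, let $\Gamma\subset\Symp(T^2\times S^2,\omega)$ be a nonabelian finite $p$-group. Using a $\Gamma$-invariant $\omega$-tame almost complex structure together with the Gromov--McDuff theory of ruled symplectic $4$-manifolds, I would produce a $\Gamma$-invariant ruling: a fibration of $T^2\times S^2$ by $J$-holomorphic spheres in the class $F$ over a base $B\cong T^2$. Since $p\ge5$ is larger than every torsion order $1,2,3,4,6$ occurring in $\SL(2,\ZZ)=\mathrm{MCG}(T^2)$, the $\Gamma$-action on $B$ is isotopic to the identity and hence conjugate to an action by translations; in particular $[\Gamma,\Gamma]$ acts trivially on $B$ and therefore fiber-preservingly on $T^2\times S^2$. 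Choosing a nontrivial $Z\in[\Gamma,\Gamma]$ of order $p$, on each fiber $Z$ is a finite-order symplectomorphism of $S^2$, that is a rotation of order $p$, whose fixed locus is a pair of disjoint symplectic sections $\Sigma_\pm\cong T^2$ with homology classes $[\Sigma_\pm]=T\pm jF$ and areas $a_\pm=\alpha\pm j\beta$.

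It remains to prove $p\mid j$, which is the crux. Fixing a $\Gamma$-invariant compatible complex structure on the normal bundle $N$ of $\Sigma_+$, the group $\Gamma$ acts on the holomorphic line bundle $N\to\Sigma_+\cong T^2$ by automorphisms covering the translation action on the base, i.e.\ through the theta group of $N$, whose commutator pairing is the alternating Weil pairing of level $\deg N=[\Sigma_+]^2=2j$. Now $Z$ acts on the fibers of $N$ by the nonzero transverse rotation number it already has along $\Sigma_+$, hence by a scalar of order $p$; since $Z$ lies in $[\Gamma,\Gamma]$ it is a product of commutators, so this scalar is a value of the Weil pairing, forcing that pairing to take order-$p$ values and therefore $p\mid\deg N=2j$. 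As $p$ is odd this gives $p\mid j$, and $Z\ne\mathrm{id}$ forces $j\ne0$, so $|j|\ge p$. Finally positivity of both section areas, $a_\pm=\alpha\pm j\beta>0$, yields $\alpha>|j|\beta\ge p\beta$, i.e.\ $2p\le\lambda(\omega)$. The main obstacle is precisely this divisibility step --- installing the theta-group structure on the normal bundle and extracting $p\mid\deg N$ from the order-$p$ commutator scalar; the preliminary reduction to an invariant ruling with a pair of fixed sections rests on the $J$-holomorphic curve machinery developed in the earlier sections.
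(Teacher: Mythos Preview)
Your overall architecture matches the paper's: reduce to a $\Gamma$-invariant ruling via pseudoholomorphic curves, find the fixed sections of a central element, and extract the arithmetic constraint from the normal bundle. Your ``if'' direction, however, takes a genuinely different and cleaner route. Rather than building a symplectic form on the associated bundle $P_{2p}\times_{S^1}S^2$ by minimal coupling and then locating $\Gamma_p$ inside $\Gamma_{2p}$ (as the paper does in \S\ref{s:proof-thm:main-2} and \S\ref{s:proof-thm:main-p}), you realize $T^2\times S^2$ as the ruled surface $\PP(L^{-1}\oplus L)$ with $\deg L=p$, let the theta group act holomorphically, average a K\"ahler form, and read the constraint $\alpha>p\beta$ directly from the K\"ahler cone. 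This is correct and more conceptual; the paper's construction is more explicit but less transparent about why the threshold is exactly $2p$.

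The ``only if'' direction has a genuine gap at the divisibility step. You assert that $\Gamma$ acts on the normal bundle $N\to\Sigma_+$ ``through the theta group of $N$, whose commutator pairing is the Weil pairing of level $\deg N$'', and conclude that an order-$p$ commutator scalar forces $p\mid\deg N$. But the theta group and its Weil pairing are holomorphic objects; your $\Gamma$ acts on $N$ only as a smooth complex line bundle, and there is no reason the smooth lifts of translations lie in the (holomorphic) theta group $\mathcal{G}(N)$, nor that $\Gamma_B$ lands in the $\deg N$-torsion $K(N)$. In the smooth category the gauge group $C^\infty(T^2,\CC^*)$ is not central, so the commutator does not descend to a pairing $T^2\times T^2\to\CC^*$ with values automatically in $\mu_{|\deg N|}$. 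The paper closes this gap differently (Lemmas \ref{lemma:grau-divisible}--\ref{lemma:cota-Gamma-B}): one shows that $N^{\otimes d_c}$, with $d_c=|[\Gamma,\Gamma]|$, carries a $\Gamma_B$-action and hence descends to $T^2/\Gamma_B$, giving $|\Gamma_B|\mid d_c\deg N$; combined with the exponent bound $d_c\mid\exp(\Gamma_B)$ coming from bilinearity of the commutator map, this forces $\Gamma_B$ cyclic (hence $\Gamma$ abelian) whenever $p\nmid\deg N$. Two smaller points: you should say why the fixed locus of $Z$ splits into two sections rather than a connected double cover (if connected, $\deg N=0$ and Lemma~\ref{lemma:grau-no-zero} already gives $\Gamma$ abelian); and ``$Z\neq\mathrm{id}$ forces $j\neq0$'' is not right as stated --- the correct reason is again that $\Gamma$ nonabelian forces $\deg N\neq0$.
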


Combining Theorem \ref{thm:main-1} with the main result in \cite{M2} we obtain the following.

\begin{corollary}
\label{cor:main}
Let $(M,\omega)$ be a symplectic $4$-manifold diffeomorphic to the total
space of an $S^2$-fibration over a compact Riemann surface or to the product
of two compact Riemann surfaces. Then
$\Symp(M,\omega)$ is Jordan.
\end{corollary}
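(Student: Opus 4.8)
The plan is to reduce everything to two facts about the Jordan property and then stratify by Euler characteristic. The key elementary observation is that the Jordan property is inherited by subgroups: if $G$ is Jordan with constant $C$ and $H\subseteq G$, then any finite subgroup of $H$ is a finite subgroup of $G$, hence contains an abelian subgroup of index at most $C$, so $H$ is Jordan with the same $C$. Since $\Symp(M,\omega)\subseteq\Diff(M)$, it suffices either to prove that $\Diff(M)$ is Jordan, or, in the cases where this fails, to invoke the sharper symplectic statement of Theorem \ref{thm:main-1}. Writing $\Sigma_g$ for the closed orientable surface of genus $g$, an $S^2$-fibration $M$ over $\Sigma_g$ has $\chi(M)=2(2-2g)$, while a product $M=\Sigma_g\times\Sigma_h$ has $\chi(M)=(2-2g)(2-2h)$; in either family $\chi(M)=0$ precisely when a torus (genus-one) factor occurs.

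When $\chi(M)\neq 0$, the main result of \cite{M2}, valid in dimension $4$, shows directly that $\Diff(M)$ is Jordan; by the subgroup remark $\Symp(M,\omega)$ is then Jordan as well, and this case uses no symplectic input at all.

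The content is concentrated in the locus $\chi(M)=0$, where $M$ is either an $S^2$-fibration over $T^2$ or a product $T^2\times\Sigma_h$. For $h\geq 1$ the product $T^2\times\Sigma_h$ is aspherical, and for closed aspherical manifolds a finite effective action is governed by its action on $\pi_1$: the induced map to $\Out(\pi_1 M)$ lands in a finite subgroup of bounded order (finite subgroups of $\GL(n,\ZZ)$ are bounded by Minkowski), while the kernel is abelian, being carried by the central torus directions; hence $\Diff(M)$ is Jordan and so is $\Symp(M,\omega)$. The decisive family is the $S^2$-fibrations over $T^2$: for the trivial bundle $M=T^2\times S^2$, whose diffeomorphism group is \emph{not} Jordan by \cite{CPS}, Theorem \ref{thm:main-1} supplies exactly the needed bound, so $\Symp(T^2\times S^2,\omega)$ is Jordan; the nontrivial $S^2$-bundle over $T^2$ is handled by the same mechanism, the argument behind Theorem \ref{thm:main-1} resting on the Lalonde--McDuff classification (Theorem \ref{thm:LM}) and on the $S^2$-fibration structure rather than on a product splitting.

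I expect the main obstacle to be precisely this zero-Euler-characteristic regime. It is the only place where $\Diff(M)$ may fail to be Jordan (compare \cite{CPS,M4}), so there the abelian-subgroup bound cannot be produced from the diffeomorphism-group results of \cite{M2} and must instead be extracted from symplectic rigidity, that is, from Theorem \ref{thm:main-1} and the method underlying it; checking that this method genuinely applies to both $S^2$-bundles over $T^2$, and not merely to the product $T^2\times S^2$, is the delicate point.
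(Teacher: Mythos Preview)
Your overall case split on $\chi(M)$ matches the paper, and the $\chi(M)\neq 0$ case is handled identically via \cite{M2}. The differences lie in the $\chi(M)=0$ cases.

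For $M=T^2\times\Sigma_h$ with $h\geq 1$, your aspherical argument is different from the paper's and, as written, incomplete. The paper simply observes that $H^1(M;\ZZ)$ contains classes $\alpha_1,\dots,\alpha_4$ with $\alpha_1\cup\cdots\cup\alpha_4\neq 0$ and invokes \cite{M1}, which gives the Jordan property for $\Diff(M)$ directly. Your sketch instead routes through $\Out(\pi_1 M)$; but for $h\geq 2$ this group is not literally a subgroup of some $\GL(n,\ZZ)$, so ``Minkowski'' does not apply as stated, and the assertion that the kernel of $G\to\Out(\pi_1 M)$ is abelian (``carried by the central torus directions'') is a nontrivial Conner--Raymond type fact that you neither prove nor cite. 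The paper's route is both shorter and self-contained given the references already in play.

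The genuine gap is the nontrivial $S^2$-bundle over $T^2$. You assert that the mechanism behind Theorem~\ref{thm:main-1} applies because it ``rests on the $S^2$-fibration structure rather than on a product splitting,'' and then flag this as ``the delicate point'' without resolving it. But Theorem~\ref{thm:LM} as stated in the paper is only for $T^2\times S^2$, and several steps in Section~\ref{s:proof-thm:main} (e.g.\ the identification of $\phi_*$ on $H_2$, the degree computations for $N|_{F_j}$) use the product explicitly. The paper does \emph{not} rerun the argument for the twisted bundle; instead it takes a degree-$2$ unramified cover $\mu:T^2\to T^2$ so that $\mu^*M\simeq T^2\times S^2$, applies Theorem~\ref{thm:main-1} to $(\mu^*M,\nu^*\omega)$, and then uses the covering argument of \cite[\S 2.3]{M1} to deduce that $\Symp(M,\omega)$ is Jordan. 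You should replace your unsupported assertion with this covering trick.
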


An important ingredient in the proofs of our theorems is a deep
result of Lalonde and McDuff \cite[Theorem 1.1]{LM} which has
been mentioned above and which classifies symplectic structures
on $T^2\times S^2$ (in fact the main theorem in \cite{LM}
applies to more general 4-manifolds, but we will only use the
result for $T^2\times S^2$). Fix symplectic forms
$\omega_{T^2}$ and $\omega_{S^2}$ on $T^2$ and $S^2$
respectively, both with total volume $1$.

\begin{theorem}[Lalonde, McDuff]
\label{thm:LM}
Let $\omega$ be a symplectic form on $T^2\times S^2$.
There exists a diffeomorphism $\phi$ of $T^2\times S^2$
such that $\phi^*\omega=\alpha(\omega)\omega_{T^2}+\beta(\omega)\omega_{S^2}$.
\end{theorem}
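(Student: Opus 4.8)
The plan is to separate the statement into an elementary cohomological part and a deep geometric part. For the cohomological part, note that $H^2(T^2\times S^2;\RR)$ is two-dimensional, spanned by the pullbacks $[\omega_{T^2}]$ and $[\omega_{S^2}]$; pairing an arbitrary class against the fiber $[\{t\}\times S^2]$ and the section $[T^2\times\{s\}]$ shows that $[\omega]=\alpha(\omega)[\omega_{T^2}]+\beta(\omega)[\omega_{S^2}]$, so the split form $\omega_0:=\alpha(\omega)\omega_{T^2}+\beta(\omega)\omega_{S^2}$ is symplectic and cohomologous to $\omega$. The theorem therefore reduces to the assertion that a symplectic form on $T^2\times S^2$ is determined, up to the action of $\Diff(T^2\times S^2)$, by its cohomology class, and this is where pseudoholomorphic curves are needed.

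The first geometric step is to realize $(T^2\times S^2,\omega)$ as a symplectic ruled surface. Fix an $\omega$-compatible almost complex structure $J$. The fiber class $F=[\{t\}\times S^2]$ satisfies $F\cdot F=0$ and $\langle[\omega],F\rangle=\beta(\omega)\neq0$, and by adjunction an embedded representative has $c_1(F)=2$. Gromov's compactness theorem, together with positivity of intersections in dimension four, then guarantees that $F$ is represented by an embedded $J$-holomorphic sphere and, more strongly, that these spheres foliate $T^2\times S^2$; this yields a smooth $S^2$-fibration $\pi\colon T^2\times S^2\to B$ with $\omega$-symplectic fibers. Since $\chi(T^2\times S^2)=0=2\chi(B)$ and $B$ is a closed orientable surface, the base $B$ is diffeomorphic to $T^2$.

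The second geometric step is to use this ruling to connect $\omega$ to $\omega_0$. Applying Thurston's construction of symplectic forms adapted to $\pi$, one first produces a deformation through symplectic forms from $\omega$ to a split form compatible with the fibration. To upgrade this deformation to one that stays within the fixed cohomology class $[\omega]=[\omega_0]$, one invokes McDuff's inflation technique along suitable $J$-holomorphic fibers and sections, which allows the coefficients in the fiber and section directions to be adjusted while preserving symplecticity throughout; this is precisely the mechanism by which deformation equivalence is promoted to isotopy on ruled surfaces. Once $\omega$ and $\omega_0$ are joined by a path of cohomologous symplectic forms, Moser's stability theorem produces an isotopy $\phi$ with $\phi^*\omega=\omega_0$, as required.

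The main obstacle is this second geometric step. The existence of the $J$-holomorphic ruling is by now standard once one has Gromov compactness and the index and intersection bookkeeping specific to dimension four, but carrying out the deformation so that it remains symplectic, never degenerates, and crucially does not leave the cohomology class requires the full strength of McDuff's inflation and a careful understanding of the space of symplectic forms taming a fixed ruling. This is the genuinely deep content of the Lalonde--McDuff theorem: it is what distinguishes the uniqueness statement proved here from the much softer existence of a ruling.
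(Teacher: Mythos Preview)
The paper does not prove Theorem~\ref{thm:LM}; it is quoted from Lalonde--McDuff \cite{LM} and used as an input throughout. So there is no ``paper's own proof'' to compare against: the author treats the classification of symplectic forms on $T^2\times S^2$ as a black box, and your proposal is effectively a sketch of the original Lalonde--McDuff argument rather than an alternative to anything in this paper. (Note, incidentally, that the paper \emph{does} carry out the $J$-holomorphic fibration step in the proof of Proposition~\ref{prop:equivariant-fibration}, for a different purpose --- producing a $\Gamma$-equivariant ruling --- and with the simplifying hypothesis that $\omega$ is already a product form.)

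As a sketch of the Lalonde--McDuff strategy your outline is broadly correct, and you rightly identify inflation as the heart of the matter. A couple of technical points are glossed over. First, you need $\la[\omega],F\ra>0$ to have any $J$-holomorphic spheres in class $F$; since $\beta(\omega)$ can have either sign, one must replace $F$ by $-F$ (equivalently, reverse the orientation of $S^2$) when $\beta(\omega)<0$. Second, Gromov compactness alone does not ``guarantee that $F$ is represented by an embedded $J$-holomorphic sphere'': one needs a nonemptiness argument (e.g.\ start from the standard product $J$, where the fibers are visibly holomorphic, and deform, using that bubbling is impossible because $F$ cannot split as $A_1+A_2$ with both $A_i$ spherical and of positive $\omega$-area). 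These are standard and easily repaired, but worth stating explicitly if you intend this as more than a pointer to \cite{LM}.
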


(Pullbacks are implicit in $\alpha(\omega)\omega_{T^2}+\beta(\omega)\omega_{S^2}$ and in similar
expressions appearing in the rest of the paper.)
An immediate consequence of Theorem \ref{thm:LM} is that for any symplectic form
$\omega$ on $T^2\times S^2$ there exist arbitrarily large finite nonabelian subgroups of
$\Symp(T^2\times S^2,\omega)$: by Moser's stability,
$\omega_{T^2}$ (resp. $\omega_{S^2}$) is isomorphic to the volume form associated to
a flat metric on $T^2$ (resp. a round metric on $S^2$); so we may take
for example a subgroup of $\Symp(T^2\times S^2,\omega)$ of the form $G_1\times G_2$,
where $G_1\subset\Symp(T^2,\omega_{T^2})$ is an arbitrary large finite abelian group
and $G_2\subset\Symp(S^2,\omega_{S^2})$ is
isomorphic to any finite nonabelian subgroup of $\SO(3,\RR)$.

By Theorem \ref{thm:LM},
to prove Theorem \ref{thm:main-1} it suffices to consider product symplectic forms
$\alpha\omega_{T^2}+\beta\omega_{S^2}$.
A standard technique in $4$-dimensional symplectic geometry,
based on pseudoholomorphic curves, allows us to prove that any symplectic finite group action
on $T^2\times S^2$ is equivalent to an action which preserves the fibration $T^2\times S^2\to T^2$
given by the projection to the first factor (Proposition \ref{prop:equivariant-fibration}).
The proof of Theorem \ref{thm:main-1} follows then from combining results
on finite group actions on $T^2$ and $S^2$ with a result on
finite group actions on line bundles over $T^2$ (Proposition \ref{prop:accions-fibrats-linia}).

To prove Theorem \ref{thm:main-2}
we observe that a slight modification of the construction in \cite{CPS} can be made symplectic.
(In particular, the groups in the statement of Theorem \ref{thm:main-2} can be taken to be
finite Heisenberg groups.)
This needs to be done carefully to estimate the cohomology class represented by the symplectic form.

Theorem \ref{thm:main-1} is proved in Section \ref{s:proof-thm:main}, Theorem \ref{thm:main-2}
is proved in Section \ref{s:proof-thm:main-2}, Theorem \ref{thm:main-p} is proved
in Section \ref{s:proof-thm:main-p},
and Corollary \ref{cor:main}
is proved in Section \ref{s:proof-cor:main}.

\subsection{Notation and conventions}
All manifolds and group actions in this paper will be implicitly assumed to be smooth.
As usual in the theory of finite transformation groups
in this paper $\ZZ_n$ denotes $\ZZ/n\ZZ$, not to be mistaken, when $n$ is a prime $p$,
with the $p$-adic integers. If $p$ is a prime we denote by $\FF_p$ the field of $p$ elements.
When we say that a group $G$ can be generated by $d$ elements we mean that there
are elements $g_1,\dots,g_d\in G$, {\it not necessarily distinct}, which generate $G$.
If a group $G$ acts on a set $X$ we denote
the stabiliser of $x\in X$ by $G_x$, and for any subset $S\subset G$ we denote
$X^S=\{x\in X\mid S\subseteq G_x\}$.

\subsection{Acknowledgements}
I would like to thank the referee for pointing out simplifications of several arguments
in the paper, and for his/her suggestions to enhance its readability.

\section{Proof of Theorem \ref{thm:main-1}}
\label{s:proof-thm:main}

\subsection{}
\label{ss:proof-main}
We prove Theorem \ref{thm:main-1}
modulo some results whose proofs are postponed
to later paragraphs of this section.
Denote throughout this section
$$X=T^2\times S^2$$
and let
$$\Pi:X\to T^2$$
be the projection to the first factor.
Take the product orientation on $T^2\times S^2$,
so that $\omega_{T^2}+\omega_{S^2}$ is compatible with the orientation.

Suppose that $\omega$ is a symplectic form on $X$ and that $\Gamma\subset\Symp(X,\omega)$
is a finite group.
Since both $S^2$ and $T^2$ admit orientation reversing diffeomorphisms we may assume,
replacing $\omega$ by $\theta^*\omega$ for a suitable diffeomorphism $\theta$ of $X$,
that
$$\alpha=\alpha(\omega)>0\qquad\text{and}\qquad \beta=\beta(\omega)>0.$$
(We then conjugate the original action of $\Gamma$ by $\theta$, so
that $\Gamma$ acts by symplectomorphisms with respect to $\theta^*\omega$.)
By Theorem \ref{thm:LM} there is a diffeomorphism $\xi$ of $X$ such that
$\xi^*\omega=\alpha \omega_{T^2}+\beta\omega_{S^2}$.
Conjugating the action of $\Gamma$ on $X$ by $\xi$ we may assume that
$$\Gamma\subset\Symp(X,\alpha \omega_{T^2}+\beta\omega_{S^2}).$$

Before continuing the proof, we introduce some useful terminology.
Suppose that $$q:E\to B$$ is a fibration of manifolds (by that we mean a locally
trivial fibration in the category of smooth manifolds, so in particular $q$
is a submersion). An action of a group $\Gamma$ on $E$
is said to be compatible with $q$ if it sends fibers of $q$
to fibers of $q$.
This implies that there is an action of $\Gamma$ on
$B$ such that if $x\in q^{-1}(b)$ then $\gamma\cdot x\in q^{-1}(\gamma\cdot b)$
for any $\gamma\in \Gamma$.

Let $\kappa_{S^2}\in H_2(X;\ZZ)$ be the homology class
represented by $\{t\}\times S^2$ for any $t\in T^2$, and let
$\kappa_{T^2}\in H_2(X;\ZZ)$ be the homology class
represented by $T^2\times \{s\}$ for any $s\in S^2$ (we use the
chosen orientations of $S^2$ and $T^2$). By Proposition
\ref{prop:equivariant-fibration}, there is an orientation
preserving diffeomorphism $\phi:X\to X$ such that the action of
$\Gamma$ on $X$ is compatible with the fibration
$\Pi\circ\phi$, and such that
$\phi_*\kappa_{S^2}=\kappa_{S^2}$, where $\phi_*$ is the map
induced in homology by $\phi$. Furthermore, there is a
$\Gamma$-invariant almost complex structure $J$ on $X$ which is
compatible with $\omega$ and with respect to which the fibers
of $\Pi\circ\phi$ are $J$-complex.

Since $\phi$ is orientation preserving, it preserves the
intersection pairing in $H_2(X;\RR)\simeq\RR^2$, which is hyperbolic.
We have $\phi_*\kappa_{S^2}=\kappa_{S^2}$ and $\kappa_{S^2}$ is
isotropic.
Hence the action of $\phi$ in $H_2(X;\RR)$ can be identified with
an element of $\O(1,1)$ fixing a nonzero isotropic vector. The
following lemma is an easy exercise in linear algebra.

\begin{lemma}
\label{lemma:O-1-1}
If $A\in\O(1,1)$ fixes a nonzero isotropic vector, then $A$ is the identity.
\end{lemma}

We deduce
that the action of $\phi$ on $H_2(X;\RR)$ is trivial. Hence $\phi$ acts
trivially on $H^2(X;\RR)$, so in particular $\phi^*[\omega]=[\omega]$.

Replacing $\omega$ by $\phi^*\omega$, and
conjugating both $J$ and the action of $\Gamma$ by $\phi$ we
put ourselves in the situation where
the action of
$\Gamma$ is compatible with $\Pi$ and $J$, and the fibers of $\Pi$ are
$J$-complex. The new symplectic form $\omega$ need no longer be a product
symplectic form, but it is compatible with the
almost complex structure $J$ and its cohomology class has not changed:
\begin{equation}
\label{eq:classe-omega}
[\omega]=\alpha[\omega_{T^2}]+\beta[\omega_{S^2}].
\end{equation}
Let $\Gamma_S\subseteq\Gamma$ be the subgroup whose
elements act trivially on the base of the fibration $\Pi$.
By Proposition \ref{prop:accions-en-fibracions}
at least one of the following sets of conditions holds true.
\begin{enumerate}
\item $\Gamma_S=\{1\}$.
\item There exists a nontrivial element $\gamma\in\Gamma_S$
such that $\Gamma$ preserves $X^{\gamma}$.
\item There exists a nontrivial element $\gamma\in\Gamma_S$
and a subgroup $\Gamma_0\subseteq \Gamma$ such that $[\Gamma:\Gamma_0]\leq 12$ and
$\Gamma_0$ preserves $X^{\gamma}$; furthermore, there is some
$h\in\Gamma_0\cap\Gamma_S$ such that for any $t\in T^2$ the action of $h$ on $\Pi^{-1}(t)$ exchanges
the two points of $\Pi^{-1}(t)\cap X^{\gamma}$.
\end{enumerate}

Suppose that $\Gamma_S=\{1\}$. Then the action of $\Gamma$ on $X$ comes from an
effective action of $\Gamma$ on $T^2$.
By Lemma \ref{lemma:tor} there is an abelian subgroup $A\subseteq\Gamma$
such that $[\Gamma:A]\leq 6$. So in this case the proof of the theorem is finished.

Suppose for the rest of the proof that we are in the second or third situation
given by Proposition \ref{prop:accions-en-fibracions}. To facilitate a unified
treatment, define $\Gamma_0:=\Gamma$ in case we are in the second situation.
Let $\gamma\in\Gamma_S$
be the nontrivial element referred to by the proposition. For any $t\in T^2$
the intersection $X^{\gamma}\cap\Pi^{-1}(t)$ consists of two points (see the comments before
Proposition \ref{prop:accions-en-fibracions}). By Lemma \ref{lemma:accions-fibracions}
the restriction of $\Pi$ to $X^{\gamma}$ is a fibration of manifolds. Hence,
$F:=X^{\gamma}$ is a two dimensional manifold and the restriction
$$p:\Pi|_F:F\to T^2$$
is a degree two covering map. Furthermore, $F$ is a $J$-complex submanifold of $X$.

By Proposition \ref{prop:superficies-dins-X},
$F$ is a compact
orientable surface which is either connected or has two
connected components, and the normal bundle $N\to F$ has a
structure of complex line bundle satisfying $\deg N=0$ if
$F$ is connected and $\deg N|_{F_1}+\deg N|_{F_2}=0$ if $F$ has
two connected components $F_1$ and $F_2$.
The degrees are
defined using an orientation on $F$ with respect to which the
projection $p$ is orientation preserving. Furthermore, by
Lemma \ref{lemma:accions-efectives}, the action of $\Gamma_0$
on the total space of $N$ is effective.

We treat separately the cases
$F$ connected and $F$ disconnected. In both cases we are going to
apply Proposition \ref{prop:accions-fibrats-linia} to the induced
action of $\Gamma_0$ to $N$ (or to its restriction $N|_{F_j}$).
This can be done because, as the action of $\Gamma$ preserves $J$
and $F$ is $J$-complex, the induced action of $\Gamma$ on $F$
is orientation preserving.

Suppose first of all that $F$ is connected. Then $\deg N=0$, so by
Proposition \ref{prop:accions-fibrats-linia} there is an abelian
subgroup $A\subseteq\Gamma_0$ satisfying $[\Gamma_0:A]\leq 6$. Since
in any case $[\Gamma:\Gamma_0]\leq 12$, we have $[\Gamma:A]\leq 72$,
so we are done.

Consider, for the rest of the proof, the case in which
$F$ has two connected components $F_1$ and $F_2$.

Suppose that there is some $h\in\Gamma_0\cap\Gamma_S$
such that for any $t\in T^2$ the action of $h$ on $\Pi^{-1}(t)$ exchanges
the two points of $\Pi^{-1}(t)\cap X^{\gamma}$. Then $h$ exchanges the
two connected components $F_1$ and $F_2$, and since the action of $h$
is compatible with $J$, we get an isomorphism of complex line bundles
$N|_{F_1}\simeq N|_{F_2}$. In view of the equality $\deg N|_{F_1}+\deg N|_{F_2}=0$
we obtain $\deg N|_{F_1}=\deg N|_{F_2}=0$.
Let $\Gamma_1\subseteq\Gamma_0$ be the subgroup preserving the connected components
$F_1,F_2$. By Lemma \ref{lemma:accions-efectives} the action of $\Gamma_1$
on $N|_{F_1}$ is effective. By Proposition \ref{prop:accions-fibrats-linia}
there is an abelian subgroup $A\subseteq\Gamma_1$ such that $[\Gamma_1:A]\leq 6$.
Combining all the estimates on indices we get
$$[\Gamma:A]=[\Gamma:\Gamma_0][\Gamma_0:\Gamma_1][\Gamma_1:A]
\leq 12\cdot 2\cdot 6=144,$$
so the proof is complete in this case.

Consider, to finish, the case in which no element of $\Gamma_0$ exchanges
the connected components $F_1,F_2$. In that case we have $\Gamma_0=\Gamma$.
We are going to bound the absolute value of the degrees
of $\deg N|_{F_j}$ in terms of the numbers $\alpha,\beta$.
Let $[F_j]\in H_2(X;\ZZ)$ be the homology class represented by
$F_j$ using the orientation on $F_j$ which is compatible with
$p$. Since $p$ restricts to a
diffeomorphism $F_j\to T^2$ for $j=1,2$, we have
$$[F_j]=\kappa_{T^2}+\lambda_j\kappa_{S^2}$$
for some integer $\lambda_j$.
Let $T^{\vert}=\Ker d\Pi\subset TX$ denote the vertical tangent
bundle of the fibration $\Pi$. We have $T^{\vert}=T^2\times
TS^2$, so $c_1(T^{\vert})=2[\omega_{S^2}]$ (the factor of $2$
is the Euler characteristic $\chi(S^2)$; recall that
$\omega_{S^2}$ has total volume $1$). Since $F$ intersects each fiber of
$\Pi$ transversely in two points, $N$ can be
identified with the restriction of $T^{\vert}$ to $F$, so we
have
$$\deg N|_{F_j}=\la c_1(T^{\vert}),[F_j]\ra=\la
2[\omega_{S^2}],\kappa_{T^2}+\lambda_j\kappa_{S^2}\ra=2\lambda_j.$$
Hence,
$$\lambda_j=\frac{\deg N|_{F_j}}{2}.$$
In particular, the degree $\deg N|_{F_j}$ is an even integer.
Since both $F_1$ and $F_2$ are $J$-complex
submanifolds and $J$ is compatible with $\omega$, we have,
using (\ref{eq:classe-omega}) and the fact that the total
volumes of $\omega_{T^2}$ and $\omega_{S^2}$ are $1$,
$$0<\la[\omega],[F_j]\ra=
\la\alpha[\omega_{T^2}]+\beta[\omega_{S^2}],\kappa_{T^2}+\lambda_j\kappa_{S^2}\ra
=
\alpha+\beta\lambda_j=\alpha+\beta\frac{\deg N|_{F_j}}{2}.$$
Consequently
$$\deg N|_{F_j}>-\frac{2\alpha}{\beta}$$
for $j=1,2$. Since $\deg N|_{F_1}+\deg N|_{F_2}=0$, this
implies that
$$|\deg N|_{F_j}|<\frac{2\alpha}{\beta},$$
and since $\deg N|_{F_j}$ is an even integer it follows that
$|\deg N|_{F_j}|\leq\lambda(\omega).$

By assumption $\Gamma_0$ preserves $F_1$, so by Lemma \ref{lemma:accions-efectives}
the action of $\Gamma_0$ on $N|_{F_1}$ is effective. By Proposition
\ref{prop:accions-fibrats-linia} there is
an abelian subgroup $A\subseteq\Gamma_0$ such that
$$[\Gamma_0:A]\leq 6\max\{1,|\deg N|_{F_1}|\}\leq 6\cdot \lambda(\omega).$$
Since $\Gamma_0=\Gamma$, the proof of Theorem \ref{thm:main-1} is complete.

\subsection{Construction of a $\Gamma$-invariant $S^2$-bundle structure}

Recall that $\kappa_{S^2}\in H_2(X;\ZZ)$ denotes the
homology class represented by $\{t\}\times S^2$ for any $t\in T^2$.

\begin{prop}
\label{prop:equivariant-fibration}
Let $\alpha,\beta$ be positive real numbers and consider the symplectic
form $\omega=\alpha \omega_{T^2}+\beta\omega_{S^2}$.
Suppose that a finite group $\Gamma$ acts symplectically on $(X,\omega)$.
There exists an orientation preserving diffeomorphism $\phi:X\to X$ such that
the action of $\Gamma$ is compatible with the fibration $\Pi\circ\phi$,
and a $\Gamma$-invariant almost complex structure $J$ on $X$
such that the fibers of $\Pi\circ\phi$ are $J$-complex. Finally we have
$\phi_*\kappa_{S^2}=\kappa_{S^2}$.
\end{prop}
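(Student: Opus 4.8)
The plan is to realize the fibration $\Pi\circ\phi$ as the projection of $X$ onto the leaf space of a foliation by $J$-holomorphic spheres in the class $\kappa_{S^2}$, for a suitable $\Gamma$-invariant almost complex structure $J$. First I would fix such a $J$: the space of $\omega$-compatible almost complex structures is nonempty and contractible, fibers over $X$ with fibers a Siegel space carrying a natural symplectically invariant metric of nonpositive curvature, and is acted on by the finite group $\Gamma$; a Cartan-type fixed-point (center-of-mass) argument then produces a $\Gamma$-invariant compatible $J$. For this $J$ one computes $c_1(TX,J)=2[\omega_{S^2}]$, so that $\langle c_1(TX,J),\kappa_{S^2}\rangle=2$ while $\kappa_{S^2}^2=0$.

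The geometric heart of the argument is the study of the moduli space of simple $J$-holomorphic spheres in the class $\kappa_{S^2}$. Since $\pi_2(T^2)=0$, the projection $\Pi$ kills the class of every sphere, so every $J$-holomorphic sphere represents $b\kappa_{S^2}$ with $b\geq 1$ (positivity of area). As $\kappa_{S^2}$ is primitive this rules out bubbling, and Gromov compactness makes the moduli space compact. Because $\langle c_1,\kappa_{S^2}\rangle=2\geq 1$ and the curves are embedded of square zero, automatic transversality applies to the given, non-generic $J$ — this is precisely the point that lets me keep $J$ exactly $\Gamma$-invariant instead of perturbing it — so the moduli space is a smooth manifold of regular curves. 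Positivity of intersections ($\kappa_{S^2}^2=0$) forces distinct such curves to be disjoint, and exactly one passes through each point (true for the product $J$ and persisting for every compatible $J$ by the structure theory of ruled symplectic $4$-manifolds, see \cite{MS}). Hence these spheres foliate $X$ and define a locally trivial $S^2$-fibration $q:X\to B$ over the closed smooth leaf space $B$. Since $J$ is $\Gamma$-invariant, each $\gamma\in\Gamma$ sends $J$-spheres to $J$-spheres; as $\gamma_*\kappa_{S^2}$ is again a primitive isotropic sphere class it must equal $\kappa_{S^2}$, so $\Gamma$ permutes the fibers of $q$ and descends to $B$.

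It then remains to identify $q$ with $\Pi$ up to a diffeomorphism of $X$. From $\chi(X)=0=\chi(B)\,\chi(S^2)$ I get $\chi(B)=0$, and $B$ is orientable, so $B\cong T^2$. The bundle $q$ is classified by the parity of the self-intersection of a section class: every class $\sigma$ with $\sigma\cdot\kappa_{S^2}=1$ has the form $\kappa_{T^2}+m\kappa_{S^2}$ and hence even square $2m$, so $q$ is the trivial $S^2$-bundle. Trivializing $q\cong B\times S^2$ and composing with a diffeomorphism $B\cong T^2$, I would obtain an orientation-preserving $\phi:X\to X$ carrying each fiber of $q$ to a fiber of $\Pi$; equivalently the fibers of $\Pi\circ\phi$ are exactly the $J$-holomorphic spheres. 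Choosing the trivialization to respect fiber orientations gives $\phi_*\kappa_{S^2}=\kappa_{S^2}$. Consequently the fibers of $\Pi\circ\phi$ are $J$-complex, and since $\Gamma$ preserves the $q$-fibration it is compatible with $\Pi\circ\phi$, as required.

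The hard part will be the pseudoholomorphic input above, namely establishing the unobstructed, bubble-free foliation of $X$ by $J$-spheres for the prescribed equivariant $J$; the identification of $B$ and the construction of $\phi$ are comparatively routine topology and linear algebra. What makes the difficult step go through cleanly, avoiding any genericity hypothesis that would be incompatible with $\Gamma$-invariance, is the combination of the vanishing of $\pi_2(T^2)$ (no bubbling, and primitivity of $\kappa_{S^2}$) with automatic transversality for square-zero spheres with $c_1=2$.
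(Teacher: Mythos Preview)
Your proposal is correct and follows essentially the same approach as the paper: pick a $\Gamma$-invariant $\omega$-compatible $J$, invoke automatic transversality (the paper cites \cite{HLS}) together with Gromov compactness and adjunction to get a foliation of $X$ by embedded $J$-spheres in the class $\kappa_{S^2}$, identify the leaf space with $T^2$, and recognize the resulting $S^2$-bundle as trivial to build $\phi$. The only cosmetic differences are that the paper argues triviality of the bundle via the diffeomorphism classification of the two oriented $S^2$-bundles over $T^2$ rather than your parity-of-section-square computation, and it establishes $\gamma_*\kappa_{S^2}=\kappa_{S^2}$ directly from $\pi_2(T^2)=0$ and $\omega$-invariance rather than via primitive isotropic sphere classes.
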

\begin{proof}
The proof uses pseudoholomorphic curves and is a slight generalisation of
\cite[Proposition 4.1]{McD} and the note afterwards. We sketch the main
ideas for completeness, giving precise references when necessary (the reader
not familiar with pseudoholomorphic curve theory may look at the beautiful
survey \cite{LM2} for an introduction targeted to results on $4$-dimensional
ruled symplectic manifolds).
Let $\jJ$ denote the Fr\'echet space of $\cC^{\infty}$ almost
complex structures on $X$ which are compatible with
$\omega$.
The idea is that upon fixing any $J\in\jJ$ the $J$-holomorphic spheres cohomologous to $\kappa_{S^2}$
will fit into a fibration.

Fix a complex structure $J_{S^2}$
on $S^2$ compatible with the orientation. Let, for any $J\in\jJ$,
$$\mM(J)=\{u:S^2\to X\mid \ov{\partial}_Ju=0,\,u_*[S^2]=\kappa_{S^2}\}.$$
Here $\ov{\partial}_Ju=\frac{1}{2}(du\circ J_{S^2}-J\circ du)$ and
$[S^2]\in H_2(S^2;\ZZ)$ denotes the fundamental class
defined by the orientation. The group $G\simeq\PSL(2,\CC)$ of complex automorphisms of
$S^2$ acts on $\mM(J)$ by precomposition.
The compact open topology on the set of maps from $S^2$
to $X$ induces a topology on $\mM(J)$
with respect to which the action of $G$ is continuous and proper.
Gromov compactness theorem implies that $\mM(J)/G$ is compact
because one cannot write $\kappa_{S^2}=A_1+A_2$ in such a way that
both $A_1$ and $A_2$ belong to the image of the Hurewicz homomorphism
$\pi_2(X)\to H_2(X;\ZZ)$, and also $\la\omega,A_j\ra>0$ for $j=1,2$
(hence, no bubbling can occur).

Since $\la c_1(TX),\kappa_{S^2}\ra=2>1$, the main result in \cite{HLS}
(see also \cite[\S 3.3.2]{LM2}) implies that $\mM(J)$ has a
natural structure of smooth oriented manifold of dimension
$2(\la c_1(TX),\kappa_{S^2}\ra+1)=6$, and the action of $G$ on $\mM(J)$
is smooth. By the adjunction formula (see \cite[Exercise
3.5]{LM2}) each $u\in\mM(J)$ is an embedding. In particular,
the action of $G$ on $\mM(J)$ is free and $\mM(J)/G$ has a
natural structure of smooth oriented compact surface.

The natural evaluation map $\psi_J:\mM(J)\times_G S^2\to X$
that sends the class of $(u,s)\in\mM(J)\times S^2$ to $u(s)$
is an orientation preserving diffeomorphism (see \cite[Proposition 4.1]{McD} and the
note afterwards, and also \cite[\S 4.3]{LM2} --- the latter
refers only to fibrations over $S^2$, but everything works
identically for fibrations over general Riemann surfaces). The fact that
the evaluation map is orientation preserving is not explicitly mentioned neither
in \cite[Proposition 4.1]{McD} nor in \cite[\S 4.3]{LM2}, but it is an immediate consequence of
the fact that the evaluation map has degree $1$.
Using the multiplicativity of Euler characteristics in
fibrations, it follows that $\chi(\mM(J)/G)=0$, so that
$\mM(J)/G$ is diffeomorphic to $T^2$. Hence the projection
$f:\mM(J)\times_G S^2\to\mM(J)/G$ is a fibration over $T^2$
with fibers diffeomorphic to $S^2$, and its total space is
orientable.

It is well known that over a given surface there exist two oriented
$S^2$-fibrations up to isomorphism, the trivial one and a
twisted one (see e.g. \cite[Lemma 6.25]{MS}), and their total spaces are not
diffeomorphic. Therefore $\mM(J)\times_G S^2$ must be the trivial fibration
over $T^2$, so there exist
%
%
$$\xi\colon\mM(J)\times_G S^2\to X,\qquad
\eta\colon\mM(J)/G\to T^2$$
such that $\Pi\circ\xi=\eta\circ f$.

We emphasize that the preceding results hold true for {\it every} $J\in\jJ$.

Now let $\jJ_\Gamma\subset\jJ$ be the subset of
$\Gamma$-invariant almost complex structures (see
\cite[Proposition 5.49]{MS} and the comments before it).
For any $J\in\jJ_{\Gamma}$ the diffeomorphism
$$\phi:=\xi\circ\psi_J^{-1}:X\to X$$
and the almost complex structure $J$ satisfy the properties of the theorem.
Indeed, the fact that $\pi_2(T^2)=1$ implies that any diffeomorphism of $T^2\times S^2$
sends $\kappa_{S^2}$ to $\pm\kappa_{S^2}$. Since $\Gamma$ preserves $\alpha\omega_{T_2}+\beta\omega_{S^2}$, it follows that $\Gamma$ preserves $\kappa_{S^2}$.
Consequently $\Gamma$ acts on $\mM(J)$; this induces an action on
$\mM(J)\times_G S^2$ preserving the fibers of $\eta$ and with respect to which $\xi$ is $\Gamma$-equivariant.
\end{proof}


\subsection{Lemmas on finite groups acting on the sphere and the torus}

\begin{lemma}
\label{lemma:esfera-dos-punts} If $H$ is a nontrivial finite cyclic group acting
effectively and orientation preservingly on $S^2$ then $(S^2)^H$ consists
of two points.
\end{lemma}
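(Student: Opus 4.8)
The plan is to reduce the computation of $(S^2)^H$ to a fixed-point count for a single generator and then apply the Lefschetz fixed point theorem. Fix a generator $h$ of $H$, of order $n>1$. Since a point is fixed by $h$ if and only if it is fixed by every power of $h$, we have $(S^2)^H=(S^2)^h$; here the hypothesis that $H$ is cyclic is essential, as a noncyclic group such as $\ZZ_2\times\ZZ_2\subset\SO(3)$ can act effectively and orientation preservingly on $S^2$ with empty fixed locus. Thus it suffices to show that the orientation preserving diffeomorphism $h$, which is nontrivial because the action is effective, has exactly two fixed points.

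First I would average an arbitrary Riemannian metric over $H$ to obtain an $H$-invariant metric $g$, so that $h$ acts as an orientation preserving isometry of $(S^2,g)$. The purpose of this step is to linearize the action near fixed points: at any $p\in(S^2)^h$ the differential $dh_p$ lies in $\SO(2)$, i.e. it is a rotation of $T_pS^2$ by some angle $\theta$.

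Next I would argue that every fixed point is isolated and nondegenerate. If $dh_p=\Id$ for some fixed $p$, then, since an isometry of a connected Riemannian manifold is determined by its value and differential at a single point, we would get $h=\Id$, contradicting effectiveness. Hence $\theta\neq 0$, so $\det(\Id-dh_p)=2-2\cos\theta>0$; in particular $dh_p$ has no eigenvalue equal to $1$, so $p$ is an isolated fixed point whose local Lefschetz index is $\operatorname{sign}\det(\Id-dh_p)=+1$. By compactness of $S^2$ the fixed locus is therefore finite, say of cardinality $N$, with all local indices equal to $+1$.

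Finally I would invoke the Lefschetz fixed point theorem. As $h$ is an orientation preserving homeomorphism of $S^2$, it acts trivially on $H_0(S^2)$ and on $H_2(S^2)$, while $H_1(S^2)=0$, so the Lefschetz number is $L(h)=1+1=2$. Equating $L(h)$ with the sum of local indices gives $N=2$, which is the claim. The only place where some care is needed is the passage to an invariant metric together with the resulting linearization, which simultaneously guarantees that fixed points are isolated and that each index equals $+1$; once this is in place the conclusion is immediate. (Alternatively, one could average a metric, apply uniformization to conjugate $H$ into the conformal automorphism group $\PSL(2,\CC)$ of $\CP^1$, use that a finite subgroup of $\PSL(2,\CC)$ is conjugate into $\SO(3)$, and observe directly that a nontrivial cyclic group of rotations fixes exactly the two poles of its axis.)
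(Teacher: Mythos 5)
Your proof is correct, but it takes a genuinely different route from the paper. The paper's argument (stated in one line, building on the averaging trick described just before it) is: average a metric to make the action conformal, invoke Riemann's uniformization theorem to identify the surface with $\CP^1$, and identify $\Aut(\CP^1)$ with $\PSL(2,\CC)$, so that the generator becomes a nontrivial finite-order M\"obius transformation, which fixes exactly two points --- this is essentially the alternative you sketch in your closing parenthesis. Your main argument instead stays entirely within differential topology: after reducing $(S^2)^H=(S^2)^h$ for a generator $h$ and averaging a metric, you use the rigidity of isometries (an isometry of a connected manifold with $dh_p=\Id$ at a fixed point is the identity) to conclude that every fixed point is isolated with local Lefschetz index $+1$, and then the Lefschetz fixed point theorem, with $L(h)=2$ for an orientation preserving homeomorphism of $S^2$, forces exactly two fixed points. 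What your route buys is independence from uniformization and from the classification of M\"obius transformations; what the paper's route buys is brevity and economy within its own context, since the uniformization-plus-$\PSL(2,\CC)$ machinery (and the classification of finite subgroups of $\SO(3,\RR)$) is needed anyway in the immediately following Lemma on general finite group actions on $S^2$, so the cyclic case comes for free. Both arguments are complete; your side remark that $\ZZ_2\times\ZZ_2\subset\SO(3,\RR)$ acts with empty fixed locus correctly identifies why cyclicity is essential.
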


Given two groups $H'\subseteq H$ we denote by $\Sigma_H(H')$ the collection of
all subgroups of $H$ which are equal to the image of $H'$ by some automorphism of $H$, i.e.
$$\Sigma_H(H')=\{\phi(H')\mid \phi\in\Aut(H)\}.$$
For example, $H'$ is a characteristic subgroup of $H$ if and only if $\Sigma_H(H')=\{H'\}$.

\begin{lemma}
\label{lemma:esfera}
Any nontrivial finite group $H$ acting effectively and orientation preservingly on $S^2$
has a nontrivial cyclic subgroup $H'\subseteq H$ such that at least one of these sets of
conditions is satisfied:
\begin{enumerate}
\item $|\Sigma_H(H')|\leq 1$,
\item $|\Sigma_H(H')|\leq 12$ and there is some $h\in H$ in the normalizer of $H'$
which exchanges the two points in $(S^2)^{H'}$.
\end{enumerate}
Furthermore, if $p>2$ is a prime and $H$ is a finite $p$-group acting effectively and
orientation preservingly on $S^2$ then $H$ is cyclic.
\end{lemma}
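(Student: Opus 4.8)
The plan is to reduce the problem to linear actions and then argue case by case using the classification of finite subgroups of $\SO(3)$. First I would invoke the classical fact that any finite group acting effectively and orientation preservingly on $S^2$ is conjugate, inside the diffeomorphism group of $S^2$, to a subgroup of $\SO(3)$. Since both the hypotheses and the conclusion of the lemma---including the two-point sets $(S^2)^{H'}$ provided by Lemma \ref{lemma:esfera-dos-punts}, and the action of the normalizer on them---are unchanged under such a conjugation, there is no loss of generality in assuming $H\subseteq\SO(3)$. Then $H$ is one of: a cyclic group $C_n$, a dihedral group $D_n$ of order $2n$, or one of the three exceptional groups $A_4$, $S_4$, $A_5$.

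The key point to settle before the case analysis is the geometric meaning of condition (2). If $H'=\la g\ra$ is a nontrivial cyclic subgroup then it is a group of rotations about a fixed axis $L$, and by Lemma \ref{lemma:esfera-dos-punts} the set $(S^2)^{H'}$ is the pair of poles $L\cap S^2$. Any $h\in N_H(H')$ preserves $L$ and hence permutes these two poles; it exchanges them precisely when it reverses the orientation of $L$, which for $|H'|\ge 3$ is equivalent to $hgh^{-1}=g^{-1}$, i.e. to $h$ acting on $H'$ by inversion. For $|H'|=2$ this algebraic reformulation is vacuous, so there one must exhibit a swapping $h$ geometrically, as a rotation about an axis perpendicular to $L$.

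With this in hand the cases are routine. If $H$ is cyclic I take $H'=H$, which is characteristic, so $|\Sigma_H(H')|=1$ and (1) holds. If $H=D_n$ with $n\ge 3$ I take $H'$ to be the rotation subgroup $C_n$, the unique cyclic subgroup of order $n$ and hence characteristic, so (1) holds again. In the remaining groups I use (2). For $H=D_2\cong\ZZ_2\times\ZZ_2$, for $H=A_4$, and for $H=S_4$ I take $H'$ to be an order-$2$ subgroup lying in the canonical normal Klein four-subgroup $V_4$; in each case the $\Aut(H)$-orbit of $H'$ has exactly $3$ elements, so $|\Sigma_H(H')|=3\le 12$, and a second involution of $V_4$, realized as a perpendicular $180^\circ$ rotation, normalizes $H'$ and exchanges its two poles. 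For $H=A_5$ the $15$ involutions have too large an orbit, so instead I take $H'=C_5$ (or $C_3$): its $\Aut(A_5)$-orbit consists of $6$ subgroups (respectively $10$), both $\le 12$, and the normalizer $N_{A_5}(C_5)\cong D_5$ contains an involution inverting $C_5$, realized as a $180^\circ$ rotation about an axis perpendicular to the $5$-fold axis, which swaps the two poles.

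Finally, for the $p$-group assertion with $p>2$: inspecting the list $C_n$, $D_n$, $A_4$, $S_4$, $A_5$, the only groups whose order is an odd prime power are the cyclic groups $C_{p^k}$, since all the others have even order; hence any such $H$ is cyclic. The step I expect to be the main obstacle is condition (2) for the order-$2$ subgroups: there the inversion criterion carries no information, so one must verify by hand that the concrete perpendicular rotations lie in the normalizer and genuinely interchange the poles. Keeping track of which conjugacy class of involutions (or, for $A_5$, which cyclic subgroup) is chosen is exactly where the bound $12$ and the exclusion of the $15$ icosahedral involutions become essential.
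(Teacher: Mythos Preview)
Your proof is correct and follows essentially the same approach as the paper: reduce to finite subgroups of $\SO(3,\RR)$ via uniformization and then run through the classification case by case. The differences are only in the choices made within that scheme---you treat the Klein four group $D_2$ explicitly (the paper's dihedral list starts at $n\ge 3$ and does not single out this case), for $S_4$ you take $H'$ of order $2$ inside $V_4$ rather than the paper's cyclic subgroup of order $4$, and for $A_5$ your count $|\Sigma_{A_5}(C_5)|=6$ is actually sharper than the paper's stated value of $12$ (there are six $5$-fold axes, not twelve), though either bound suffices for the lemma.
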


\begin{lemma}
\label{lemma:tor} Any finite group $H$ acting effectively and
orientation preservingly on $T^2$ has an abelian subgroup
$H'\subseteq H$ such that: $[H:H']\leq 6$, the action of $H'$
on $T^2$ is free, $H'$ is isomorphic to a subgroup of
$S^1\times S^1$, and the induced action of $H'$ on
$H^1(T^2;\ZZ)$ is trivial. Furhermore, if $p>3$ is a prime and
$H$ is a finite $p$-group acting effectively and orientation
preservingly on $T^2$, then the subgroup $H'$ can be chosen to
be $H$ itself.
\end{lemma}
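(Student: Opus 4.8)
The plan is to analyze the action through the induced representation on homology. Since $H$ acts orientation-preservingly on $T^2$, the action on $H_1(T^2;\ZZ)\cong\ZZ^2$ defines a homomorphism $\rho\colon H\to\SL(2,\ZZ)$, and I set $H'=\Ker\rho$. By construction $H'$ acts trivially on $H_1(T^2;\ZZ)$, hence on the dual $H^1(T^2;\ZZ)$, which is one of the required conclusions. To bound the index, note that $\rho(H)$ is a finite subgroup of $\SL(2,\ZZ)$; averaging an inner product shows it is conjugate into $\SO(2)$, so it is cyclic, and the crystallographic restriction (every finite-order element of $\SL(2,\ZZ)$ has order in $\{1,2,3,4,6\}$) bounds its order by $6$. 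Hence $[H:H']=|\rho(H)|\le 6$.

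The heart of the argument is to prove that $H'$ acts freely. Let $h\in H'$ be nontrivial, of order $n$. As a finite-order orientation-preserving diffeomorphism, $h$ can be linearized near any fixed point (Bochner's trick), so each fixed point is isolated with local model a rotation $R_\theta\in\SO(2)$; here $\theta\neq 0$, since otherwise $h$ would be the identity on a neighbourhood and hence everywhere, the fixed-point set of a finite-order diffeomorphism being a closed submanifold. Then $\det(I-R_\theta)=2-2\cos\theta>0$, so each fixed point has index $+1$ and the Lefschetz number equals the number of fixed points. On the other hand $L(h)=\sum_i(-1)^i\Tr\bigl(h_*\mid H_i(T^2;\QQ)\bigr)=2-\Tr\rho(h)=2-2=0$, because $\rho(h)$ is the identity and $h$ preserves orientation. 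Therefore $h$ has no fixed points, and $H'$ acts freely.

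Once free action is established the remaining properties are formal. The quotient $Y=T^2/H'$ is a closed orientable surface with $\chi(Y)=\chi(T^2)/|H'|=0$, so $Y\cong T^2$, and the covering $T^2\to Y$ is regular with deck group $H'$; thus $H'\cong\pi_1(Y)/\pi_1(T^2)\cong\ZZ^2/\Lambda$ for a finite-index sublattice $\Lambda\subset\ZZ^2$. In particular $H'$ is a finite abelian group generated by two elements, so $H'\cong\ZZ_{d_1}\times\ZZ_{d_2}$ and embeds in $S^1\times S^1$. For the final assertion, if $H$ is a $p$-group with $p>3$ then $\rho(H)$ is a finite $p$-subgroup of $\SL(2,\ZZ)$; by the order bound above it can contain no element of order divisible by a prime larger than $3$, so $\rho(H)=\{1\}$ and $H'=H$, and all the conclusions hold for $H$ itself. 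The only delicate point is the fixed-point analysis of the second paragraph; the rest is the elementary structure theory of finite subgroups of $\SL(2,\ZZ)$ together with standard covering-space theory.
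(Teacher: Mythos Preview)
Your proof is correct and takes a genuinely different route from the paper's. The paper puts an $H$-invariant conformal structure on $T^2$ (via the averaging trick), identifies $T^2$ with an elliptic curve $T=\CC/\Lambda$ so that $H$ acts by holomorphic automorphisms, and then invokes the structure $\Aut(T)=T\cdot\Aut_0(T)$ with $\Aut_0(T)$ cyclic of order at most $6$; the subgroup is defined as $H'=H\cap T$, and all the required properties (freeness, triviality on $H^1$, embedding into $S^1\times S^1$) are immediate because $T$ acts on itself by translations. You instead take $H'=\Ker\bigl(H\to\SL(2,\ZZ)\bigr)$, bound the index via the crystallographic restriction, and establish freeness by a Lefschetz fixed-point count; the structural conclusions then follow from covering-space theory. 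The two constructions in fact produce the \emph{same} subgroup $H'$ (a holomorphic automorphism $z\mapsto\alpha z+\beta$ acts trivially on $H_1$ precisely when $\alpha=1$), so the difference is purely one of method. Your approach trades the appeal to uniformization and the classification of automorphisms of elliptic curves for the Lefschetz theorem and elementary facts about finite subgroups of $\SL(2,\ZZ)$; it is somewhat longer but arguably more self-contained, and the Lefschetz argument for freeness is a nice alternative to invoking the translation action.
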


To prove the preceding lemmas, we use the following argument.
If a finite group $H$ acts
by orientation preserving diffeomorphisms on a surface
$\Sigma$, then one may take an invariant Riemannian metric on
$\Sigma$ and consider the induced conformal structure. The
surface $\Sigma$ then becomes a Riemann surface, and the action
of $H$ on $\Sigma$ is by Riemann surface automorphisms. At this point we
may use results on automorphisms of Riemann surfaces to
understand the action of $H$.

\subsubsection{Proof of Lemmas \ref{lemma:esfera-dos-punts} and \ref{lemma:esfera}}
Lemma \ref{lemma:esfera-dos-punts} follows from Riemann's
uniformization theorem and the identification of the
automorphisms of $\Aut(\CP^1)$ with $\PSL(2,\CC)$.
For Lemma \ref{lemma:esfera} we use the classification
of the finite subgroups of $\PSL(2,\CC)$.
%
%
%
These coincide, up to conjugation, with those of $\SO(3,\RR)$,
because $\SO(3,\RR)\subset\PSL(2,\CC)$ is a maximal compact
subgroup.
Each finite subgroup of $\SO(3,\RR)$ is isomorphic to one of
these: a cyclic group $C_n$, a dihedral group $D_{2n}$ ($n\geq
3$), or the group $G_{12}$ (resp. $G_{24}$, $G_{60}$) of
orientation preserving isometries of a regular tetrahedron
(resp. cube, icosahedron), the subindex denoting the number of
elements (see e.g. \cite[Lect. 1]{Do}).

We prove Lemma \ref{lemma:esfera} treating separately each
case.
If $H\simeq C_n$ then we set $H':=H$, so $|\Sigma_H(H')|=1$. If
$H\simeq D_{2n}$ then we define $H'\subset H$ to be the
subgroup generated by all the elements of $H$ of order bigger
than $2$; the subgroup $H'$ is a nontrivial characteristic
cyclic subgroup of $H$, so $|\Sigma_H(H')|=1$. If $H\simeq
G_{12}$ then taking $H'\subset H$ to be any cyclic subgroup of
order $2$ we have $|\Sigma_H(H')|=3$; $H'$ can be identified
with the orientation preserving isometries of a regular
tetrahedron fixing the midpoints of two opposite edges, and
there is some orientation preserving isometry $h$ that
exchanges the two midpoints. If $H\simeq G_{24}$ then taking
$H'\subset H$ to be any cyclic subgroup of order $4$ we have
$|\Sigma_H(H')|=3$; $H'$ can be identified with the orientation
preserving isometries of a cube fixing the centers of two
opposite faces, and there is some orientation preserving
isometry $h$ that exchanges the centers of the two faces.
Finally, if $H\simeq G_{60}$ then taking $H'\subset H$ to be
any cyclic subgroup of order $5$ we have $|\Sigma_H(H')|=12$;
$H'$ can be identified with the orientation preserving
isometries of a regular icosahedron fixing two opposite
vertices, and there is some orientation preserving isometry $h$
that exchanges the two opposite vertices. The statement of
$p$-groups follows from the classification of finite subgroups
of $\SO(3,\RR)$.

\subsubsection{Proof of Lemma \ref{lemma:tor}}
We may identify $T^2$ with an elliptic curve $T=\CC/\Lambda$,
where $\Lambda\subset\RR^2\simeq\CC$ is a full rank lattice, in
such a way that $H$ acts on $T$ by complex automorphisms. Let
$\Aut_0(T)\subset \Aut(T)$ denote the subgroup of automorphisms
fixing the identity element $e$. We have
$\Aut(T)=T\cdot\Aut_0(T)$. It is well known that $\Aut_0(T)$
coincides with the group of discrete symmetries of the lattice
$\Lambda$ which are induced by complex linear automorphisms of
$\CC$, so $\Aut_0(T)$ is a cyclic group of order $2$, $3$,
$4$ or $6$. Hence $[\Aut(T):T]\leq 6$.
%
It follows that
$H':=H\cap T$ satisfies $[H:H']\leq 6$. Since $T$ is isomorphic
to $S^1\times S^1$ as a Lie group, $H'$ is isomorphic to an
abelian subgroup of $S^1\times S^1$. Since the action of $T$ on
itself is trivial in $H^1(T;\ZZ)$, so is the action of $H'$.
The statement on $p$-groups follows from the observation that
the only primes dividing an element of $\{2,3,4,6\}$ are $2$
and $3$.

\subsection{Lemmas on finite group actions and invariant submanifolds}

\begin{lemma}
\label{lemma:accions-efectives} Let $E$ be a compact and
connected manifold. Suppose that a finite group $H$ acts
effectively on $E$ and that $F\subset E$ is a $H$-invariant
submanifold. Let $N\to F$ be the normal bundle. The action of
$H$ on $E$ induces, linearising in the normal directions
of $F$, an effective action of $H$ on $N$ by bundle
automorphisms.
\end{lemma}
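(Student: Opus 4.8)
The plan is to treat the two halves of the statement separately: the existence of the induced action is routine, and all the substance lies in its effectiveness. First I would observe that, since each $h\in H$ preserves $F$, its differential $dh$ carries $T_xF$ isomorphically onto $T_{h(x)}F$ for every $x\in F$; hence $dh$ preserves the subbundle $TF\subset TE|_F$ and descends to a bundle isomorphism of $N=TE|_F/TF$ covering the action of $h$ on $F$. Functoriality of the differential, $d(h_1h_2)=dh_1\circ dh_2$, shows that $h\mapsto$ (the induced map on $N$) is a genuine action of $H$ on $N$ by bundle automorphisms. This part requires no more than unwinding definitions.

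For effectiveness I would argue by contradiction: suppose $h\in H$ acts trivially on $N$; since $H$ acts effectively on $E$, it suffices to prove $h=\Id_E$, for then $h=1$ in $H$. Acting trivially on $N$ means precisely that $h$ fixes $F$ pointwise and induces the identity on every fiber $N_x$. Fixing any $x\in F$, the endomorphism $dh_x\colon T_xE\to T_xE$ then restricts to the identity on $T_xF$ and induces the identity on $N_x=T_xE/T_xF$. Writing $dh_x=\Id+A$, I get that $A$ maps $T_xE$ into $T_xF$ and vanishes on $T_xF$, whence $A^2=0$ and $dh_x$ is unipotent.

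The one genuinely essential use of the hypotheses is the step that upgrades this to $dh_x=\Id_{T_xE}$: because $h$ has finite order $n$, we have $(\Id+A)^n=\Id+nA=\Id$, and over $\RR$ this forces $A=0$. Without finiteness the claim is false, as a nontrivial shear fixing $F$ shows, so I expect this to be the main point to get right. To globalise, I would average a Riemannian metric over the finite group $H$ to obtain an $H$-invariant metric $g$, making every element of $H$ a $g$-isometry; since an isometry of a connected Riemannian manifold is determined by its $1$-jet at a single point, the equalities $h(x)=x$ and $dh_x=\Id$ force $h=\Id_E$. By effectiveness of the $H$-action on $E$ this means $h=1$, so the only element acting trivially on $N$ is the identity and the induced action is effective.
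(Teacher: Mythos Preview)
Your argument is correct. Both you and the paper reduce to showing that an element $h$ acting trivially on $N$ satisfies $dh_x=\Id_{T_xE}$ at some $x\in F$, and then globalise via an $H$-invariant Riemannian metric (you invoke the fact that an isometry of a connected manifold is determined by its $1$-jet; the paper uses the equivariant exponential map, which amounts to the same thing).

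The one genuine difference is in the local step. You obtain $dh_x=\Id$ by the unipotent-plus-finite-order trick: $dh_x=\Id+A$ with $A^2=0$, and then $(\Id+A)^n=\Id+nA=\Id$ forces $A=0$. The paper instead uses the invariant metric already at this stage: the orthogonal splitting $T_xE=T_xF\oplus (T_xF)^{\perp}$ is $dh_x$-invariant, so triviality on $T_xF$ and on $N_x\cong(T_xF)^{\perp}$ gives $dh_x=\Id$ directly, with no nilpotency argument needed. Your route has the virtue of isolating exactly where finiteness of $H$ enters (a shear shows it is essential), while the paper's route is shorter once the metric is in hand. Either way the content is the same standard lemma.
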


\begin{lemma}
\label{lemma:accions-fibracions} Let $q:E\to B$ be a
fibration of compact manifolds. Suppose that a finite group $H$
acts on $E$ compatibly with $q$, preserving an almost complex
structure $J$ on $E$, and preserving all fibers of $q$. Then for any subset
$U\subseteq H$ the
fixed point set $E^{U}$ is a $J$-complex submanifold and the
restriction of $q$ to $E^{U}$ is a fibration of manifolds.
\end{lemma}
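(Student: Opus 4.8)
The plan is to reduce to the case where $U$ is a finite subgroup, establish that $E^U$ is a smooth $J$-complex submanifold by linearizing the action at fixed points, and then produce $H$-equivariant local trivializations of $q$ that restrict to trivializations of the fibration $q|_{E^U}$.

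First I would observe that $E^U=E^{\langle U\rangle}$ and that $\langle U\rangle$ is a finite subgroup of $H$, so we may assume $U=K$ is a finite subgroup. Averaging an arbitrary metric over $H$ gives an $H$-invariant Riemannian metric $g$ on $E$; with respect to $g$ the exponential map $\exp_x$ is $K$-equivariant near each $x\in E^K$. Bochner's linearization theorem then shows that in normal coordinates the $K$-action is linear (the isotropy representation) and that near $x$ the fixed set is $E^K=\exp_x((T_xE)^K)$; hence $E^K$ is a smooth submanifold with $T_xE^K=(T_xE)^K$. Because $H$ preserves $J$, evaluating $dh_x\circ J_x=J_{h\cdot x}\circ dh_x$ at a fixed point $x=h\cdot x$ for $h\in K$ shows that the isotropy representation of $K$ commutes with $J_x$; therefore $(T_xE)^K$ is $J_x$-invariant, so $T_xE^K$ is a complex subspace at every point and $E^K$ is a $J$-complex submanifold. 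This settles the first assertion.

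For the second assertion I would use the hypothesis that $H$ preserves every fiber, which means $q\circ h=q$ for all $h\in H$; thus the $H$-action is trivial on $B$ and each $dh$ preserves the vertical bundle $T^{\vert}=\ker dq$. Using the $H$-invariant metric $g$, the horizontal distribution $\mathcal H:=(T^{\vert})^{\perp}$ is $H$-invariant. Since $E$ is compact, $q$ is a proper submersion, and Ehresmann's theorem yields local trivializations by horizontal transport: over a ball $V$ around a point $b_0\in B$ one defines $\Psi:V\times q^{-1}(b_0)\to q^{-1}(V)$ by following the $\mathcal H$-horizontal lift of the radial path from $b_0$ to the given point of $V$.

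The key point, and the step I expect to require the most care, is that $\Psi$ is $H$-equivariant for the action that is trivial on $V$ and is the given action on $q^{-1}(b_0)$. This follows because $\mathcal H$ is $H$-invariant and $H$ acts trivially on $B$: by uniqueness of horizontal lifts, the lift starting at $h\cdot x$ equals $h$ applied to the lift starting at $x$, so $\Psi(b,h\cdot x)=h\cdot\Psi(b,x)$. Consequently $\Psi$ restricts to a diffeomorphism $V\times(q^{-1}(b_0))^K\to q^{-1}(V)\cap E^K$. Since $(q^{-1}(b_0))^K=(q|_{E^K})^{-1}(b_0)$ is the fixed set of the $K$-action on the compact fiber $q^{-1}(b_0)$, which is a manifold by the first part applied fiberwise, this exhibits $q|_{E^K}$ as a locally trivial fibration of manifolds. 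Taking $K=\langle U\rangle$ completes the proof.
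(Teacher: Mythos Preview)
Your proof is correct and follows essentially the same line as the paper: reduce to $U=\langle U\rangle$, use an $H$-invariant metric and the exponential map to see that $E^U$ is a $J$-complex submanifold, and then invoke Ehresmann's theorem for the fibration claim. The only minor difference is that the paper simply notes that it suffices to check $q|_{E^U}$ is a submersion (which is immediate since the $H$-invariant horizontal space $\mathcal H_x$ lies in $(T_xE)^{\langle U\rangle}$ and maps isomorphically onto $T_{q(x)}B$), whereas you spell out the equivariant Ehresmann trivialization explicitly; these are two phrasings of the same argument.
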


The proofs of these lemmas are standard, so we just sketch the
main ideas. Suppose that a finite group $H$ acts on a compact
manifold $E$. Let $g$ be an $H$-invariant Riemannian metric on
$E$. Let $x\in E$ be any point, and let $H_x\subseteq H$ be its
isotropy group. The action of $H_x$ on $E$ induces a linear
action on $T_xE$, and the exponential map $\exp^g_x:T_xE\to E$
is $H_x$-equivariant. So, near $x$, $E^{H_x}$ is a submanifold
whose tangent space at $x$ can be identified with the linear
subspace $(T_xE)^{H_x}\subseteq T_xE$. Repeating the same
argument at each point of $E^{H_x}$ it follows that $E^{H_x}$
is a closed submanifold of $E$. The same argument implies that
$E^{H}$ is a closed submanifold of $E$.

If the action of $H$ on $E$ is effective and $E$ is connected
then for any nontrivial subgroup $H'\subseteq H$ the fixed
point set $E^{H'}$ has dimension smaller than that of $E$. This
implies that for any $x\in E^{H'}$ the linear action of $H'$ on
$T_xE$ identifies $H'$ with a subgroup of $\Aut(T_xE)$, and
hence is effective. Lemma \ref{lemma:accions-efectives} follows
immediately from this observation.

The proof of Lemma \ref{lemma:accions-fibracions} follows easily from the previous
arguments. Replacing $H$ by the subgroup generated by $U$ it suffices to consider
the case $U=H$.
For the last statement, note that by Ehresmann's theorem \cite{E} it suffices to check that
the restriction of $q$ to $E^{H}$ is a submersion.

\subsection{Finite groups of automorphisms of spherical fibrations over $T^2$}

Let $J$ be an almost complex structure on $X$ with respect to which
the fibers of $$\Pi:X\to T^2$$
are $J$-complex. The following observation is implicitly used in the next proposition.
If a finite group $G$ acts on $X$ preserving the fibers of $\Pi$ and respecting the
almost complex structure $J$ then for any nontrivial $g\in G$ and any $t\in T^2$ the
fixed point set $(\Pi^{-1}(t))^g$ consists of two points. This is a consequence of
Lemma \ref{lemma:esfera-dos-punts} and the fact that, since the action of $G$
preserves $J$ and the fibers of $\Pi$ are $J$-complex, the restriction of the action of
$G$ to any fiber of $\Pi$ is orientation preserving.

\begin{prop}
\label{prop:accions-en-fibracions}
Suppose that a finite group
$\Gamma$ acts effectively on $X$ respecting $J$,
and suppose that the action is compatible with the fibration
$\Pi$. Let $\Gamma_S\subseteq\Gamma$ be the subgroup whose
elements act trivially on the base of the fibration $\Pi$.
At least one of the following sets of conditions holds true.
\begin{enumerate}
\item $\Gamma_S=\{1\}$.
\item There exists a nontrivial element $\gamma\in\Gamma_S$
such that $\Gamma$ preserves $X^{\gamma}$.
\item There exists a nontrivial element $\gamma\in\Gamma_S$
and a subgroup $\Gamma_0\subseteq \Gamma$ such that $[\Gamma:\Gamma_0]\leq 12$ and
$\Gamma_0$ preserves $X^{\gamma}$; furthermore, there is some
$h\in\Gamma_0\cap\Gamma_S$ such that for any $t\in T^2$ the action of $h$ on $\Pi^{-1}(t)$ exchanges
the two points of $\Pi^{-1}(t)\cap X^{\gamma}$.
\end{enumerate}
\end{prop}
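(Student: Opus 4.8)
The plan is to apply Lemma \ref{lemma:esfera} to the fiberwise action of $\Gamma_S$ and then to promote its conclusions, which concern a single sphere, to statements about the global fixed-point submanifolds inside $X$. The one step that needs genuine care is to check that for every $t\in T^2$ the subgroup $\Gamma_S$ acts \emph{effectively} on the fiber $\Pi^{-1}(t)\simeq S^2$, and orientation preservingly (the latter holds because the action preserves $J$ and the fibers are $J$-complex, as in the observation preceding the proposition). For effectiveness, take a nontrivial $g\in\Gamma_S$ and apply Lemma \ref{lemma:accions-fibracions} to $\la g\ra$: the set $X^g$ is a $J$-complex submanifold and $\Pi|_{X^g}\colon X^g\to T^2$ is a fibration, so its fibers $(\Pi^{-1}(t))^g$ all have the same dimension. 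If $g$ fixed some fiber pointwise, that two-dimensional fiber would lie in $X^g$, forcing every fiber into $X^g$; then $g$ would act trivially on all of $X$, contradicting the effectiveness of $\Gamma$. Hence $\la g\ra$ acts effectively on each fiber, and by the observation before the proposition $(\Pi^{-1}(t))^g$ consists of exactly two points for every $t$.

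If $\Gamma_S=\{1\}$ we are in case (1). Otherwise I would apply Lemma \ref{lemma:esfera} to $\Gamma_S$ acting on one fixed fiber, obtaining a nontrivial cyclic subgroup $H'=\la\gamma\ra\subseteq\Gamma_S$ that satisfies either (a) $|\Sigma_{\Gamma_S}(H')|\leq 1$, or (b) $|\Sigma_{\Gamma_S}(H')|\leq 12$ together with an element $h$ in the normalizer of $H'$ that exchanges the two points of $(S^2)^{H'}$. The submanifold to track is $X^{\gamma}=X^{H'}$, which by the previous step and Lemma \ref{lemma:accions-fibracions} is a double cover of $T^2$ via $\Pi$.

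The key point is that $\Gamma_S$ is normal in $\Gamma$ (it is the kernel of the induced action of $\Gamma$ on the base), so conjugation by any $\delta\in\Gamma$ restricts to an automorphism of $\Gamma_S$; combined with the identity $\delta\cdot X^{g}=X^{\delta g\delta^{-1}}$, this shows that $\Gamma$ permutes the submanifolds $X^{\delta H'\delta^{-1}}$ with $\delta H'\delta^{-1}\in\Sigma_{\Gamma_S}(H')$. Hence the $\Gamma$-orbit of $X^{H'}$ has cardinality at most $|\Sigma_{\Gamma_S}(H')|$. In case (a) the subgroup $H'$ is characteristic in $\Gamma_S$, so every conjugate equals $H'$ and $\Gamma$ preserves $X^{\gamma}$: this is case (2). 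In case (b) the stabiliser $\Gamma_0$ of $X^{H'}$ in $\Gamma$ has index at most $12$ and preserves $X^{\gamma}$. Since $h$ normalises $H'$ it preserves $X^{H'}$, so $h\in\Gamma_0\cap\Gamma_S$; and because whether $h$ swaps or fixes the two points of $\Pi^{-1}(t)\cap X^{\gamma}$ is locally constant in $t$ while $T^2$ is connected, the fact that $h$ swaps them on one fiber forces it to swap them on every fiber. This yields case (3).

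The main obstacle is the first step, namely ruling out a nontrivial kernel for the fiberwise actions of individual elements of $\Gamma_S$; here the fibration structure of $X^g$ furnished by Lemma \ref{lemma:accions-fibracions} is essential. The remaining subtlety is the globalisation over the connected base $T^2$ of the single-fiber conclusions of Lemma \ref{lemma:esfera}: the count via $\Sigma_{\Gamma_S}(H')$ is purely group-theoretic and hence automatically uniform across fibers, while the swapping property of $h$ must be spread to all fibers by the connectedness argument above.
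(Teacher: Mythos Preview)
Your proof is correct and follows essentially the same approach as the paper: reduce to the effective fiberwise action of $\Gamma_S$ via Lemma~\ref{lemma:accions-fibracions}, apply Lemma~\ref{lemma:esfera}, and globalise using normality of $\Gamma_S$ in $\Gamma$. The only notable difference is in the last step: the paper shows $h$ swaps on every fiber by applying Lemma~\ref{lemma:accions-fibracions} to the group $G=\langle h,\Gamma_S'\rangle$ and concluding $X^G=\emptyset$ from $X^G\cap S=\emptyset$, whereas you use the (equally valid) local-constancy/connectedness argument on the base.
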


\begin{proof}
Let $\Gamma$ be a finite group acting effectively on $X$ and preserving both $J$ and $\Pi$.
As mentioned before, since the fibers of
$\Pi$ are $J$-complex, the induced action of $\Gamma$ on each
fiber of $\Pi$ is orientation preserving. Let
$\Gamma_S\subseteq\Gamma$ be the normal subgroup whose elements preserve the fibers of $\Pi$.
If $\Gamma_S=\{1\}$ then the proposition holds trivially.
So assume for the rest of the proof that $\Gamma_S\neq\{1\}$.

Let $S\subset X$ be any of the fibers of
$\Pi$. We claim that the action of $\Gamma_S$ on $S$
is effective. Indeed, if for some element
$\eta\in\Gamma_S$ we had $S^{\eta}=S$
then, since by Lemma \ref{lemma:accions-fibracions} the
projection $\Pi:X^{\eta}\to T^2$ is a fibration, we
would deduce that the fibers of $\Pi: X^{\eta}\to T^2$
are two dimensional closed submanifolds of the fibers of
$\Pi:X\to T^2$, hence $X^{\eta}=X$, contradicting the
assumption that $\Gamma$ acts effectively on $X$.

Since the action of
$\Gamma_S$ on $S$ is effective and orientation preserving, we may apply Lemma
\ref{lemma:esfera} and deduce that there is a nontrivial cyclic
subgroup $\Gamma_S'\subseteq \Gamma_S$ for which at least one of the following
two sets of conditions holds true.
\begin{enumerate}
\item $|\Sigma_{\Gamma_S}(\Gamma_S')|=1$;
\item $|\Sigma_{\Gamma_S}(\Gamma_S')|\leq 12$ and there is some $h\in\Gamma_S$
which normalizes $\Gamma_S'$ and which exchanges the two points in $S^{\Gamma_S'}$.
\end{enumerate}
In the first case we take $\gamma$ to be a generator of $\Gamma_S'$.
Then $X^{\gamma}=X^{\Gamma_S'}$ and, since $\Gamma_S'$ is a characteristic
subgroup of a normal subgroup $\Gamma_S$ of $\Gamma$, $\Gamma_S'$ is normal
in $\Gamma$. This implies that $X^{\Gamma_S'}$ (and hence also $X^{\gamma}$)
is preserved by $\Gamma$.

In the second case we take again generator $\gamma\in\Gamma_S'$
and we define
$$\Gamma_0=\{g\in\Gamma\mid g\Gamma_S'g^{-1}=\Gamma_S'\}.$$
Since $\Gamma_S$ is normal in $\Gamma$, $\Gamma_0$ satisfies
$[\Gamma:\Gamma_0]\leq |\Sigma_{\Gamma_S}(\Gamma_S')|\leq 12$.
Furthermore $\Gamma_0$ preserves $X^{\gamma}=X^{\Gamma_S'}$ because
$\Gamma_S'$ is normal in $\Gamma_0$. We claim that for any $t\in T^2$
the action of $h$ on $\Pi^{-1}(t)$ exchanges
the two points of $\Pi^{-1}(t)\cap X^{\gamma}$. Clearly $h\in\Gamma_0$,
because by assumption $h$ normalizes $\Gamma_S'$, so the action of
$h$ preserves $X^{\gamma}$. Since $h\in\Gamma_S$, the action of $h$ also preserves all the fibers
of $\Pi$. Applying Lemma \ref{lemma:accions-fibracions} to the
action to the subgroup $G\subseteq\Gamma_S$ generated by $h$ and the elements
of $\Gamma_S'$, it follows that the restriction of $\Pi$ to $X^G$ is a fibration
of manifolds. Since $X^G\cap S=\emptyset$, we deduce that $X^G=\emptyset$,
and this means that for any $t\in T^2$
the action of $h$ exchanges the two points in $\Pi^{-1}(t)\cap X^{\gamma}$.
\end{proof}

\begin{prop}
\label{prop:superficies-dins-X}
Suppose that $F\subset X$ is a $J$-complex closed submanifold intersecting transversely
each fiber of $\Pi$ and such that the restriction
of $\Pi$ to $F$ is a $2$-sheeted (unramified) covering $F\to T^2$.
Let $N\to F$ be the normal bundle of the inclusion $F\hookrightarrow X$,
endowed with the structure of complex line bundle inherited by $J$.
Then either $F$ is connected or it has two connected
    components $F_1$, $F_2$. In the first case, $F$ is
    diffeomorphic to $T^2$ and $\deg N=0$; in the
    second case, $F_j$ is diffeomorphic to $T^2$ for
    $j=1,2$ and $\deg N|_{F_1}+\deg N|_{F_2}=0$.
\end{prop}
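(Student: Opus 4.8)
The plan is to analyze the covering map $p=\Pi|_F\colon F\to T^2$ and use the fact that $F$ is a $J$-complex submanifold to get both the diffeomorphism type of $F$ and the degree constraint on $N$. First I would settle the topology of $F$. Since $p\colon F\to T^2$ is a $2$-sheeted unramified covering and $T^2$ is connected, $F$ has either one or two connected components: if two, each maps homeomorphically onto $T^2$ and is thus a torus; if one, then $F$ is a connected double cover of $T^2$, and since any connected double cover of $T^2$ is again a torus (the only connected covers of $T^2$ are tori, as $\pi_1(T^2)=\ZZ^2$ and every finite-index subgroup of $\ZZ^2$ is again $\ZZ^2$), $F$ is diffeomorphic to $T^2$ as well. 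This handles the diffeomorphism-type claims in both cases.

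Next I would compute the degrees. The key point is to identify $N$ with a restriction of the vertical tangent bundle. Because $F$ meets each fiber $\Pi^{-1}(t)$ transversely and $F$ is $J$-complex with the fibers $J$-complex, at each point $x\in F$ the tangent space splits as $T_xF\oplus N_x$, and transversality with the fiber identifies the complex line $N_x$ with the vertical tangent space $T^{\vert}_x=\Ker d\Pi_x$. Thus $N\cong T^{\vert}|_F$ as complex line bundles. Since $T^{\vert}=T^2\times TS^2$ has $c_1(T^{\vert})=\chi(S^2)[\omega_{S^2}]=2[\omega_{S^2}]$, I would compute $\deg N|_{F_j}=\la c_1(T^{\vert}),[F_j]\ra$ by evaluating against the homology class $[F_j]$; as in the main proof, writing $[F_j]=\kappa_{T^2}+\lambda_j\kappa_{S^2}$ gives $\deg N|_{F_j}=2\lambda_j$, an even integer.

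To obtain the vanishing of the total degree I would use that $[F]$ (or $[F_1]+[F_2]$ in the disconnected case) is, as a homology class in $X$, the class of a section-like surface meeting each fiber twice, so its projection degree over $\kappa_{S^2}$ must cancel. More precisely, I would argue that the sum $\deg N|_{F_1}+\deg N|_{F_2}$ (respectively $\deg N$ when $F$ is connected) equals $\la c_1(T^{\vert}),[F]\ra$ where $[F]=[F_1]+[F_2]=2\kappa_{T^2}+(\lambda_1+\lambda_2)\kappa_{S^2}$ in the disconnected case. The cancellation $\lambda_1+\lambda_2=0$ I would deduce from the fact that $F$ bounds, in an appropriate sense, inside the trivial fibration: since $X=T^2\times S^2$, the normal bundle of the whole fiber-transverse surface $F$, viewed as the zero set of a section of an auxiliary bundle or via the fact that $[F]\cdot\kappa_{S^2}=2$ pins down the $\kappa_{T^2}$-component while the self-intersection $[F]\cdot[F]$ computes the total normal degree, gives $\deg N=[F]\cdot[F]$. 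Computing $[F]\cdot[F]$ in the hyperbolic intersection form on $H_2(X;\RR)$ with $\kappa_{T^2}\cdot\kappa_{S^2}=1$, $\kappa_{T^2}^2=\kappa_{S^2}^2=0$ yields $0$ in the connected case and reduces to $\deg N|_{F_1}+\deg N|_{F_2}=[F_1]\cdot[F_1]+[F_2]\cdot[F_2]$, which equals $2(\lambda_1+\lambda_2)$, to be shown to vanish.

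The main obstacle I expect is establishing the degree-zero relation cleanly rather than the topological classification. The subtlety is that $\deg N|_{F_j}$ is an intersection-theoretic self-intersection only when $F_j$ is embedded in $X$, and one must be careful that the orientation conventions (chosen so that $p$ is orientation preserving) make the identification $N\cong T^{\vert}|_F$ complex-linear with the correct sign, so that $\deg N|_{F_j}=2\lambda_j$ genuinely records the $S^2$-winding. I would pin this down by evaluating $c_1(T^{\vert})$ on $[F_j]$ directly, as above, since this bypasses self-intersection subtleties; the vanishing of $\lambda_1+\lambda_2$ (equivalently $\deg N|_{F_1}+\deg N|_{F_2}=0$) then follows because $[F_1]+[F_2]=[F]$ equals the class of a global $2$-section which, being fiberwise two points on $S^2$, carries no net $\kappa_{S^2}$-component — i.e. $(\;[F_1]+[F_2]\;)\cdot\kappa_{T^2}=\la[\omega_{S^2}]\text{-dual},\cdot\ra$ forces $\lambda_1+\lambda_2=0$. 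Verifying this last identity carefully is the crux.
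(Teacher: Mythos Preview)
Your topology argument is fine and equivalent to the paper's Euler-characteristic computation. The identification $N\cong T^{\vert}|_F$ as complex line bundles is also correct and is used in the body of the paper. The gap is in your degree argument: the assertion that a fiberwise two-point set ``carries no net $\kappa_{S^2}$-component'' is false in general. A pair of disjoint sections $F_1,F_2$ can have $[F_1]+[F_2]=2\kappa_{T^2}+\mu\kappa_{S^2}$ with $\mu$ arbitrary (take the graph of a degree-$d$ map $T^2\to S^2$ together with a constant section). So nothing about being a $2$-section forces $\lambda_1+\lambda_2=0$, and your computation $[F_1]^2+[F_2]^2=2(\lambda_1+\lambda_2)$ is just a tautological restatement of what you want, not a constraint.

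Here is how to close the gap using exactly the intersection-theoretic tools you set up. In the disconnected case, the two components $F_1,F_2$ are \emph{disjoint}, so $[F_1]\cdot[F_2]=0$; computing in the hyperbolic form gives $(\kappa_{T^2}+\lambda_1\kappa_{S^2})\cdot(\kappa_{T^2}+\lambda_2\kappa_{S^2})=\lambda_1+\lambda_2$, whence $\lambda_1+\lambda_2=0$ and $\deg N|_{F_1}+\deg N|_{F_2}=2(\lambda_1+\lambda_2)=0$. In the connected case, write $[F]=2\kappa_{T^2}+\mu\kappa_{S^2}$ (the coefficient $2$ is the degree of $p$). Then the self-intersection gives $\deg N=[F]\cdot[F]=4\mu$, while the identification $N\cong T^{\vert}|_F$ gives $\deg N=\langle c_1(T^{\vert}),[F]\rangle=2\mu$; equating, $\mu=0$ and $\deg N=0$. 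This is a genuinely different route from the paper, which instead reduces the structure group of the $S^2$-bundle to $\SO(3,\RR)$ so that $F$ becomes invariant under the fiberwise antipodal map; the antipodal map then matches $N$ with its complex conjugate, forcing the total degree to vanish. Your intersection-form approach is more elementary and avoids the structure-group reduction, but you need the two observations above (disjointness, and self-intersection versus $c_1(T^{\vert})$) rather than the heuristic about $2$-sections.
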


The hypothesis of the proposition imply that $F$ is a compact orientable surface,
and to give a sense to the degree of $N$, we orient $F$ in
such a way that $p$ is orientation preserving.

\begin{proof}
Clearly, either
$F$ is connected or has two connected components. A computation
with the Euler characteristic shows that in the first case $F$
is a torus. In the second case the restriction of $p$ to each
connected component of $F$ is a diffeomorphism, so $F$ is the disjoint union of
two tori.

To prove the formulas on the degree of $N$, recall that on a real vector
bundle of rank two a choice of complex structure is equivalent up to homotopy
to a choice of orientation. Via this equivalence, the first Chern class is
equal to the Euler class. There is a natural (up to homotopy) isomorphism
between $N$ and the vertical tangent bundle of $\Pi$. Endowing the latter with
the orientation induced by $J$, this isomorphism is orientation preserving.
As a fibration of smooth oriented manifolds, we can identify $\Pi:X\to T^2$ with the
total space of $P\times_{\PSL(2,\CC)}\CP^1$, where $P$ is the trivial principal $\PSL(2,\CC)$-bundle.
But $P$ admits a reduction of the structure group to $\SO(3,\RR)$ with respect to
which $F$ is invariant under the antipodal map $X\to X$, because for any two distinct points $p,q\in\CP^1$ the
space $$\{f:\CP^1\to S^2\text{ conformal isomorphism }\mid f(p)=f(q)\text{ are antipodal}\}/\SO(3,\RR)$$
is contractible ($S^2$ is the round sphere in $\RR^3$). This implies the formulas on $\deg N$.
\end{proof}

\subsection{Finite groups of automorphisms of a complex line bundle over $T^2$}

\begin{prop}
\label{prop:accions-fibrats-linia} Let $L\to T^2$ be a complex
line bundle. Assume that a finite group $\Gamma$ acts
effectively on $L$ by vector bundle automorphisms and that
the induced action on $T^2$ is orientation preserving.
Then there is an
abelian subgroup $\Gamma_{\ab}\subseteq\Gamma$ satisfying
$$[\Gamma:\Gamma_{\ab}]\leq 6\cdot \max\{1,|\deg L|\}.$$
Suppose in addition that $\Gamma$ acts trivially on
$H^1(T^2;\ZZ)$ and that the induced action of $\Gamma$ on $T^2$
factors through a free action of an abelian quotient of
$\Gamma$ which can be generated by $2$ elements. Then there is
an abelian subgroup $\Gamma_{\ab}\subseteq\Gamma$ satisfying
$$[\Gamma:\Gamma_{\ab}]\leq \max\{1,|\deg L|\}.$$
\end{prop}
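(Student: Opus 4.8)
The plan is to analyse $\Gamma$ through the normal subgroup $\Gamma_0\subseteq\Gamma$ consisting of the automorphisms covering the identity of $T^2$. Such an automorphism is multiplication by a map $T^2\to\CC^*$, and since it has finite order it takes values in roots of unity; by connectedness of $T^2$ it is therefore a constant scalar. Constant scalars commute with every bundle automorphism, so $\Gamma_0$ is a cyclic, \emph{central} subgroup of $\Gamma$, and $Q:=\Gamma/\Gamma_0$ injects into $\Diff^+(T^2)$ as an effective orientation-preserving action. For the first bound I apply Lemma \ref{lemma:tor} to get $Q'\subseteq Q$ with $[Q:Q']\le6$ acting freely, trivially on $H^1(T^2;\ZZ)$, and realized (by the proof of that lemma) as a group of translations of $T^2$, hence $2$-generated; I set $\Gamma'=\pi^{-1}(Q')$, so $[\Gamma:\Gamma']\le6$. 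For the second bound the extra hypotheses let me take $Q'=Q$ and $\Gamma'=\Gamma$ outright, with the trivial $H^1$-action again forcing $Q'$ to be translations.

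\textbf{The commutator pairing.}
Since $\Gamma_0$ is central and $Q'$ is abelian and $2$-generated, $\Gamma'$ is nilpotent of class at most $2$, and the commutator descends to an alternating bilinear pairing $c\colon Q'\times Q'\to\Gamma_0$, determined by $\zeta:=c(a,b)=[\tilde a,\tilde b]$ for generators $a,b$ of $Q'$ with chosen lifts $\tilde a,\tilde b$. A subgroup of $\Gamma'$ is abelian precisely when its image in $Q'$ is isotropic for $c$, so I take $\Gamma'_{\ab}$ to be the preimage of a maximal isotropic subgroup; computing the radical $\mathrm{rad}\,c$ (so that $Q'/\mathrm{rad}\,c\cong\ZZ_m\times\ZZ_m$ with $m=\ord\zeta$) shows $[\Gamma':\Gamma'_{\ab}]=m$. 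It then remains only to prove $m\le\max\{1,|\deg L|\}$, after which both bounds follow from $[\Gamma:\Gamma'_{\ab}]=[\Gamma:\Gamma']\cdot m$.

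\textbf{Bounding $m$ by the degree.}
Write $d=\deg L$ and $T^2=\RR^2/\Lambda$, $Q'=\Lambda'/\Lambda$ for lattices $\Lambda\subseteq\Lambda'$, with $a=t_u$, $b=t_w$ and $u,w\in\Lambda'$. Averaging over $\Gamma'$ I fix an invariant Hermitian connection of constant curvature $F$, normalized by $\frac{i}{2\pi}\int_{T^2}F=d$. Two facts then drive the estimate. First, the commutator of the lifts equals the holonomy around the parallelogram loop spanned by $u$ and $w$, giving $\zeta=\exp\left(2\pi i\,d\,(u\wedge w)/\operatorname{covol}\Lambda\right)$. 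Second, a translation $t_v$ admits a finite-order lift only when $dv\in\Lambda$; this is the assertion that the polarization map $v\mapsto t_v^*L\otimes L^{-1}$ is multiplication by $d$, which I would verify in the explicit clutching model $L=\RR^2\times\CC/\!\sim$ by computing that the class of $\tilde a^{\,\ord(a)}$ in $H^1(T^2;\ZZ)$ is $d$ times the relevant generator, so that a constant power exists exactly when $\ord(a)\mid d$. Applying this to $a$ and $b$ yields $du,dw\in\Lambda$, hence $(u\wedge w)/\operatorname{covol}\Lambda=k/d^2$ for some $k\in\ZZ$, so $\zeta=\exp(2\pi i k/d)$ and $m=|d|/\gcd(k,|d|)\le|d|$; when $d=0$ the pairing is trivial and $m=1$.

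\textbf{Expected difficulty.}
Everything outside the two displayed facts is formal group theory and symplectic linear algebra. I expect the genuine obstacle to be the second fact: establishing, in the smooth rather than holomorphic category, that finiteness of the lifts forces $d\Lambda'\subseteq\Lambda$. Concretely this amounts to computing the winding number (class in $H^1(T^2;\ZZ)$) of $\tilde a^{\,\ord(a)}$ and checking that its residue modulo $\ord(a)$ is controlled by $d$. This divisibility is exactly what converts the a priori merely rational exponent $(u\wedge w)/\operatorname{covol}\Lambda$ into one whose denominator divides $d$, and it is the sole point at which $\deg L$ enters the final inequality.
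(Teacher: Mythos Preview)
Your approach is correct and takes a genuinely different route from the paper's. Both reduce to the central extension $1\to\Gamma_0\to\Gamma'\to Q'\to 1$ with $Q'$ acting by translations and analyse the commutator pairing, but the paper then establishes three separate lemmas --- that $|Q'|$ divides $d_c\cdot|\deg L|$ (by descending $L^{\otimes d_c}$ to $T^2/Q'$), that $d_c^2\le |Q'|$ (a group-theoretic count), and that $\deg L\ne 0$ when $\Gamma'$ is nonabelian (a cocycle/averaging argument exploiting the trivial $H^1$-action) --- and combines them to get $|Q'|\le(\deg L)^2$, finally extracting $\Gamma_{\ab}$ as the preimage of a large cyclic subgroup of $Q'$. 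You instead compute the pairing explicitly as a curvature exponential and bound its order via the constraint $d\Lambda'\subseteq\Lambda$, then pass to a maximal isotropic subgroup; this yields the sharper divisibility $d_c\mid\deg L$ and handles the degree-zero case automatically. Two small points on execution: to obtain an invariant connection of \emph{constant} curvature you should start from a constant-curvature connection and then average over $\Gamma'$ (this works because $\Gamma'$ acts through translations on the base, so the curvature stays translation-invariant); and your polarization-map heuristic is holomorphic and does not literally apply in the smooth category (smoothly $t_v^*L\cong L$ always), but the clutching computation you outline is the correct replacement --- alternatively, the second fact follows in one line by twisting $\tilde a$ by a scalar so that $\tilde a^{\,\ord(a)}=1$ and observing that $L$ then descends along the resulting free $\ZZ_{\ord(a)}$-action on $T^2$, forcing $\ord(a)\mid\deg L$, which is equivalent to $dv\in\Lambda$.
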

\begin{proof}
Let $\Gamma_0\subseteq\Gamma$ denote the subgroup consisting of
those elements which preserve the fibers of $L$. There is an
exact sequence $0\to \Gamma_0\to\Gamma\to\Gamma_B\to 0$, where
$\Gamma_B$ acts effectively and orientation preservingly on
$T^2$. By Lemma \ref{lemma:tor} there is an abelian subgroup
$\Gamma_B'\subseteq\Gamma_B$ such that
$[\Gamma_B:\Gamma_B']\leq 6$, $\Gamma_B'$ acts freely on $T^2$
and trivially on $H^1(T^2;\ZZ)$, and $\Gamma_B'$ can be
identified with a subgroup of $S^1\times S^1$. The latter
implies that $\Gamma_B'$ can be generated by two elements. So
if we replace $\Gamma$ by $\eta^{-1}(\Gamma_B')$, where
$\eta:\Gamma\to\Gamma_B$ is the quotient map, then we are in
the situation of the second statement. Consequently, the second
statement implies the first.

Let us prove the second statement. Assume that a finite group
$\Gamma$ acts effectively on a line bundle $L\to T^2$ and that
the induced action of $\Gamma$ on $T^2$ is orientation
preserving and factors through a free action of an abelian
quotient of $\Gamma$ which can be generated by $2$ elements. We
also assume that $\Gamma$ acts trivially on $H^1(T^2;\ZZ)$. If
$\Gamma$ is abelian then we set $\Gamma_{\ab}=\Gamma$ and we
are done. So we assume for the rest of the proof that $\Gamma$
is not abelian.

Let, as before, $\Gamma_0\subseteq
\Gamma$ denote the subgroup whose elements act trivially on the
base $T^2$, so that $\Gamma_B=\Gamma/\Gamma_0$ acts freely on
$T^2$, and $\Gamma_B$ is abelian and can be generated by
two elements. Let $\eta:\Gamma\to\Gamma_B$ be the quotient morphism.
We have an exact sequence of groups
$$1\to\Gamma_0\to\Gamma\stackrel{\eta}{\longrightarrow}\Gamma_B\to 1.$$
The subgroup $\Gamma_0\subset\Gamma$ is
central because its elements act by homothecies on the fibers of $L$
and the action of $\Gamma$ on $L$ is linear. Furthermore, the action
of $\Gamma$ on $L$ defines a monomorphism
$\Gamma_0\hookrightarrow S^1$, since the elements of $\Gamma_0$
act on $L$ as multiplication by a complex number of modulus
one. This implies that $\Gamma_0$ is cyclic.

Define a map
$$Q:\Gamma_B\times \Gamma_B\to\Gamma_0$$
as follows. Given elements $a,b\in\Gamma_B$ take lifts
$\alpha,\beta\in\Gamma$ and set
$$Q(a,b):=[\alpha,\beta]=\alpha\beta\alpha^{-1}\beta^{-1}.$$
The term $\alpha\beta\alpha^{-1}\beta^{-1}$
belongs to $\Gamma_0$ because $\Gamma_B$ is abelian, so $\eta(\alpha\beta\alpha^{-1}\beta^{-1})=1$.
It is straightforward to check that $[\alpha,\beta]$ only depends on $a$ and $b$, so $Q$ is well defined.

\begin{lemma}
\label{lemma:propietats-Q} The map $Q$ has the following
properties.
\begin{enumerate}
\item For all $a,b,c\in\Gamma_B$ we have
$Q(ab,c)=Q(a,c)Q(b,c)$,
$Q(a,bc)=Q(a,b)Q(a,c)$ and
$Q(a,a)=Q(1,a)=Q(a,1)=1$;
\item for any $a,b\in\Gamma_B$ the order of $Q(a,b)\in\Gamma$ divides
$\GCD(\ord_B(a),\ord_B(b))$, where $\ord_B$ refers to the order of elements in $\Gamma_B$;
\item if $p,q$ are different primes, $a\in\Gamma_B$ is a $p$-element and $b\in\Gamma_B$
is a $q$-element, then $Q(a,b)=1$;
\item if $a,b$ are both $p$-elements, the order of $Q(a,b)$ is at most $\max\{\ord_B(a),\ord_B(b)\}$.
\end{enumerate}
\end{lemma}
\begin{proof}
Suppose that $\alpha,\beta,\gamma\in\Gamma$ satisfy $\eta(\alpha)=a$, $\eta(\beta)=b$ and $\eta(\gamma)=c$. We have
\begin{align*}
Q(ab,c) &=(\alpha\beta)\gamma(\alpha\beta)^{-1}\gamma^{-1}
=\alpha\beta\gamma\beta^{-1}\alpha^{-1}\gamma^{-1}
=\alpha(\beta\gamma\beta^{-1}\gamma^{-1})\gamma\alpha^{-1}\gamma^{-1} \\
&=\alpha\gamma\alpha^{-1}\gamma^{-1}(\beta\gamma\beta^{-1}\gamma^{-1}) \qquad
\text{because $\beta\gamma\beta^{-1}\gamma^{-1}=[\beta,\gamma]$ is central} \\
&=Q(a,c)Q(b,c).
\end{align*}
The proof of $Q(a,bc)=Q(a,b)Q(a,c)$ is identical, and $Q(a,a)=Q(1,a)=Q(a,1)=1$ is immediate,
so (1) is proved. Using (1) we get
$Q(a,b)^{\ord_B(a)}=Q(a^{\ord_B(a)},b)=Q(1,b)=1$ and similarly
$Q(a,b)^{\ord_B(b)}=1$, which gives (2). Finally, (3) and (4) follow from (2).
\end{proof}

Let $\Gamma_c\subseteq\Gamma_0$ be the subgroup generated by
the elements $Q(a,b)\in\Gamma_0$ as $a,b$ run through all
elements of $\Gamma_B$. Clearly
$\Gamma_c=[\Gamma,\Gamma]$, so $\Gamma_c\neq \{1\}$ by assumption.

Before concluding the proof of Proposition \ref{prop:accions-fibrats-linia} we prove three lemmas.

Let $d_c=|\Gamma_c|$.

\begin{lemma}
\label{lemma:grau-divisible} $|\Gamma_B|$ divides the product
$d_c \deg L$.
\end{lemma}
\begin{proof}
Consider the line bundle $\Lambda=L^{\otimes d_c}$. The action of $\Gamma$ on
$L$ induces an action on $\Lambda$ defined by $\gamma\cdot (v_1\otimes\dots\otimes v_{d_c})=
(\gamma\cdot v_1)\otimes\dots\otimes (\gamma\cdot v_{d_c})$, and the subgroup of $\Gamma$ defined as $\Gamma_{\Lambda}^*=\{\gamma\in\Gamma\mid\gamma\text{ acts trivially on $\Lambda$}\}$
coincides with the set elements of $\Gamma_0$ whose order divides $d_c$. Since $\Gamma_0$ is
cyclic and $|\Gamma_c|=d_c$, we have $\Gamma_{\Lambda}^*=\Gamma_c$. The quotient
$\Gamma_{\Lambda}:=\Gamma/\Gamma_{\Lambda}^*=\Gamma/\Gamma_c=\Gamma/[\Gamma,\Gamma]$
acts effectively on $\Lambda$ and defining
$\Gamma_{\Lambda,0}:=\Gamma_0/\Gamma_c$ there is an exact sequence
$$1\to\Gamma_{\Lambda,0}\to\Gamma_{\Lambda}\to\Gamma_B\to 1.$$
The action of $\Gamma_{\Lambda}$ on $\Lambda$ gives a monomorphism
$i:\Gamma_{\Lambda,0}\hookrightarrow S^1$. Since $\Gamma_{\Lambda}$ is finite and abelian, there is a homomorphism $\rho:\Gamma_{\Lambda}\to S^1$ which extends $i$. Denote by
$$\phi:\Gamma_{\Lambda}\times\Lambda\to\Lambda$$
the map corresponding to the action of $\Gamma$ on $\Lambda$, so that $\phi(\gamma,\lambda)=\gamma\cdot\lambda$. Define a map
$$\psi:\Gamma_{\Lambda}\times\Lambda\to\Lambda$$
by $\psi(\gamma,\lambda)=\rho(\gamma)^{-1}\phi(\gamma,\lambda)$. The map $\psi$ defines a new action of $\Gamma$ on $\Lambda$, with respect to which $\Gamma_{\Lambda,0}$ acts trivially. Hence this new action factors through an action of $\Gamma_B$ on $\Lambda$ lifting the action on $T^2$. Since the action of $\Gamma_B$ on $T^2$ is free, so is the action of $\Gamma_B$ on $\Lambda$. Consequently, the bundle $\Lambda$
descends to a line bundle on the quotient $T^2/\Gamma_B$. Equivalently, there is a line bundle
$\Lambda'\to T^2/\Gamma_B$ satisfying $\Lambda\simeq q^*\Lambda'$, where $q:T^2\to T^2/\Gamma_B$
is the quotient map. Since $q$ has degree $|\Gamma_B|$, it follows that $\deg\Lambda$ is divisible by $|\Gamma_B|$. Finally, $\deg\Lambda=d_c\deg L$, so the proof is complete.
\end{proof}

\begin{lemma}
\label{lemma:grau-no-zero}
We have $\deg L\neq 0$.
\end{lemma}
\begin{proof}
Suppose that $\deg L=0$. We are going to prove that the action
of $\Gamma$ on $L$ factors through an abelian group. This is a
contradiction because by assumption $\Gamma$ is not abelian and
the action of $\Gamma$ on $L$ is effective.

Since $\deg L=0$, there is a nowhere vanishing smooth section
$\sigma:T^2\to L$. For any $\gamma\in\Gamma$ there is a unique
smooth map $\phi_{\gamma}:T^2\to\CC^*$ defined by the property
that
$\gamma\cdot\sigma(p)=\phi_{\gamma}(p)\cdot\sigma(\gamma\cdot
p)$ for every $p\in T^2$. For any $\gamma,\gamma'\in\Gamma$ the
equality
$\gamma'\cdot(\gamma\cdot\sigma(p))=(\gamma'\gamma)\cdot\sigma(p)$
gives the following cocycle condition
$$\phi_{\gamma'\gamma}(p)=\phi_{\gamma'}(\gamma\cdot
p)\phi_{\gamma}(p).$$ Denoting by $\rho_{\gamma}:T^2\to T^2$
the map $\rho_{\gamma}(p)=\gamma\cdot p$, we can rewrite the
cocycle condition as
$\phi_{\gamma'\gamma}=(\phi_{\gamma}'\circ\rho_{\gamma})\phi_{\gamma}.$
Associating to each map $T^2\to\CC^*$ its homotopy class and
using the canonical identification $[T^2,\CC^*]\simeq
H^1(T^2;\ZZ)$, each $\phi_{\gamma}$ corresponds to a cohomology
class $\Phi_{\gamma}\in H^1(T^2;\ZZ)$, and the cocycle
condition implies
$\Phi_{\gamma'\gamma}=\rho_{\gamma}^*\Phi_{\gamma'}+\Phi_{\gamma}$.
Since the action of $\Gamma$ on $H^1(T^2;\ZZ)$ is trivial, we
have $\rho_{\gamma}^*\Phi_{\gamma'}=\Phi_{\gamma'}$, so we have
$$\Phi_{\gamma'\gamma}=\Phi_{\gamma'}+\Phi_{\gamma}$$
for every $\gamma,\gamma'$. Now, $H^1(T^2;\ZZ)$ is torsion free
and $\Gamma$ is finite, so $\Phi_{\gamma}=0$ for every
$\gamma\in\Gamma$. So each $\gamma_{\gamma}$ is null homotopic,
and this implies that we can choose for every $\gamma$ a smooth
map $\psi_{\gamma}:T^2\to\CC$ such that
$\phi_{\gamma}=\exp(\psi_{\gamma})$.

Now let $\gamma,\gamma'\in\Gamma$ be arbitrary elements and let
$\zeta=[\gamma^{-1},\gamma'^{-1}]$, so that
$\gamma\gamma'=\gamma'\gamma\zeta$. We are going to prove that
$\zeta$ acts trivially on $L$. First note that, since the
action of $\Gamma$ on $T^2$ factors through an abelian
quotient, $\zeta$ acts trivially on $T^2$, so the cocycle
condition implies that
$$(\phi_{\gamma}\circ\rho_{\gamma'})\phi_{\gamma'}=\phi_{\gamma\gamma'}=\phi_{\gamma'\gamma\zeta}
=(\phi_{\gamma'\gamma}\circ\rho_{\zeta})\phi_{\zeta}=\phi_{\gamma'\gamma}\phi_{\zeta}
=(\phi_{\gamma'}\circ\rho_{\gamma})\phi_{\gamma}\phi_{\zeta}.$$
It follows that the smooth map $\chi:T^2\to\CC$ defined by the
equality
\begin{equation}
\label{eq:igualtat-chi}
\psi_{\gamma}\circ\rho_{\gamma'}+\psi_{\gamma}=\psi_{\gamma'}\circ\rho_{\gamma}+\psi_{\gamma}+\chi
\end{equation}
satisfies $\exp\chi=\phi_{\zeta}$ (note that $\chi$ need not be
equal to $\psi_{\zeta}$; what is true is that the difference
$\chi-\psi_{\zeta}$ is a constant integral multiple of
$2\pi\imag$). Let $\delta$ be the order of $\zeta$ in $\Gamma$.
Since $\zeta$ acts trivially on $T^2$ the cocycle condition for
$\phi_{\zeta}$ implies that $\phi_{\zeta}^{\delta}=1$. Hence
the condition $\exp\chi=\phi_{\zeta}$ implies that
$\chi(p)\in\delta^{-1}2\pi\imag\ZZ$ for every $p\in T^2$. Since
$\chi$ is smooth, we conclude that $\chi$ is constant. Fix any
point $p\in T^2$. It follows from (\ref{eq:igualtat-chi}) that
$$\sum_{\eta\in\Gamma}
\psi_{\gamma}(\gamma'\eta\cdot p)+\psi_{\gamma}(\eta\cdot p)=
\sum_{\eta\in\Gamma}\psi_{\gamma'}(\gamma\eta\cdot
p)+\psi_{\gamma}(\eta\cdot p)+\chi(\eta\cdot p).$$ Clearly
$\sum_{\eta\in\Gamma} \psi_{\gamma}(\gamma'\eta\cdot
p)=\sum_{\nu\in\Gamma} \psi_{\gamma}(\nu\cdot p)$ and
$\sum_{\eta\in\Gamma} \psi_{\gamma'}(\gamma\eta\cdot
p)=\sum_{\nu\in\Gamma} \psi_{\gamma'}(\nu\cdot p)$, so the
terms involving $\psi$'s in the equality above cancel each
other, and it follows that $\sum_{\eta\in\Gamma}\chi(\eta\cdot
p)=0$. Since $\chi$ is constant, this implies that $\chi=0$,
which implies $\phi_{\zeta}=1$, so $\zeta$ acts trivially on
$L$.
\end{proof}

\begin{lemma}
\label{lemma:cota-Gamma-B}
We have $d_c^2\leq |\Gamma_B|$.
\end{lemma}
\begin{proof}
We first prove that $\Gamma_c$ can be generated by an element of the form $Q(a,b)$ for some $a,b\in\Gamma_B$. Take to begin with a generator of $\Gamma_c$ of the form
$$h=Q(a_1,b_1)\cdot\dots\cdot Q(a_r,b_r).$$
Since $\Gamma_B$ is abelian we can write $a_i=\prod_p a_{ip}$, $b_i=\prod_p b_{ip}$,
where each product is over the set of primes, and $a_{ip}$, $b_{ip}$ are $p$-elements of $\Gamma_B$. In the next arguments
we use repeatedly Lemma \ref{lemma:propietats-Q}. We have
$$Q(a_i,b_i)=\prod_{p,q}Q(a_{ip},b_{iq})=\prod_p Q(a_{ip},b_{ip}),$$
and hence, if we denote by $\ord\gamma$ the order of any $\gamma\in\Gamma$,
$$\ord h=\ord \prod_i\prod_p Q(a_{ip},b_{ip})=\ord \prod_p\prod_i Q(a_{ip},b_{ip})\leq
\prod_p\max_i\ord Q(a_{ip},b_{ip}).$$
Choose for any $p$ an index $i(p)$ such that
$Q(a_{i(p)p},b_{i(p)p})=\max_i\ord Q(a_{ip},b_{ip})$. Let
$a=\prod_p a_{i(p)p}$ and $b=\prod_p b_{i(p)p}$. We have
$$d_c=\ord h\leq\prod_p\max_i\ord Q(a_{ip},b_{ip})=\ord Q(a,b).$$
This implies that $Q(a,b)$ is a generator of $\Gamma_c$. We claim that the set
$$S=\{a^ib^j\in\Gamma_B\mid 0\leq i<d_c,\,\,0\leq j<d_c\}$$
contains $d_c^2$ elements. Otherwise there would exist $0\leq k<d_c$ and $0\leq l<d_c$ such that $a^kb^l=1$, hence $b^{-l}=a^k$. This would
imply $Q(a,b)^k=Q(a^k,b)=Q(b^{-l},b)=Q(b,b)^{-l}=1$. Hence $\ord Q(a,b)<d_c$, a contradiction with our previous computation. It follows that $\Gamma_B$ contains at least $d_c^2$ elements, so the lemma is proved.
\end{proof}

We are now ready to finish the proof of Proposition \ref{prop:accions-fibrats-linia}.
By Lemma \ref{lemma:grau-no-zero}
we have $\deg L\neq 0$. By Lemma \ref{lemma:grau-divisible}, the nonvanishing of $\deg L$ implies that $|\Gamma_B|\leq |d_c\deg L|$. Using this inequality and Lemma \ref{lemma:cota-Gamma-B} we have
$$|\Gamma_B|^2\leq d_c^2(\deg L)^2\leq |\Gamma_B|(\deg L)^2.$$
Dividing both sides by $|\Gamma_B|$ we get
$$|\Gamma_B|\leq (\deg L)^2.$$
Since $\Gamma_B$ can be generated by two elements,
there are three possibilities:
$\Gamma_B$ is trivial, $\Gamma_B$ is nontrivial cyclic,
or $\Gamma_B$ is isomorphic to $\ZZ_{n_1}\times\ZZ_{n_2}$
where $n_1,n_2$ are natural numbers bigger than one. In each of the three cases
there exists a cyclic subgroup $\Gamma_{\cyc}\subseteq\Gamma_B$ such that
$[\Gamma_B:\Gamma_{\cyc}]\leq |\Gamma_B|^{1/2}\leq |\deg L|$. Define
$$\Gamma_{\ab}:=\eta^{-1}(\Gamma_{\cyc}).$$
By (1) in Lemma \ref{lemma:propietats-Q}, $\Gamma_{\ab}$ is abelian. Finally,
$[\Gamma:\Gamma_{\ab}]\leq |\deg L|$, so we are done.
\end{proof}

\section{Proof of Theorem \ref{thm:main-2}}
\label{s:proof-thm:main-2}

The first three subsections of this section are devoted to introducing the preliminaries
of the proof of Theorem \ref{thm:main-2}, which is given in Subsection \ref{ss:proof-thm:main-2}.

\subsection{The group $\Gamma_n$}
Let $I$ be an ideal of a
commutative ring $R$ with unit. Consider the group
$$T(R,I)=\left\{A(x,y,z):=\left(\begin{array}{ccc}
1 & x & z \\
0 & 1 & y \\
0 & 0 & 1 \end{array}\right)\in\Mat_{3\times 3}(R)\mid x,y,z\in
I\right\}$$ with the group structure given by matrix
multiplication. For any natural number $n$,
$T(\ZZ,n\ZZ)$ is a normal subgroup of $T(\ZZ,\ZZ)$, so we may
define the quotient group
$$\Gamma_n:=T(\ZZ,\ZZ)/T(\ZZ,n\ZZ).$$
The map
$$\eta:\Gamma_n\to V:=\ZZ_n\times\ZZ_n$$
which sends the class of $A(x,y,z)$ to $([x],[y])$
is a surjective morphism of groups. The kernel of $\eta$ can be identified with
$\Gamma_n^0=\{[A(0,0,z)]\mid z\in\ZZ\},$
which is the center of $\Gamma_n$.
The map $\psi:\Gamma_n^0\to \ZZ_n$ that sends $[A(0,0,z)]$ to $[z]$ is an isomorphism of groups.
Hence $\Gamma_n$ sits in an exact sequence of groups
$$0\to\ZZ_n\to\Gamma_n\stackrel{\eta}{\longrightarrow}\ZZ_n\times\ZZ_n\to 0.$$
The group $\Gamma_n$ is sometimes called a finite Heisenberg group.
When $n$ is a prime $p$, $\Gamma_n$ is isomorphic to the group in the statement
of Theorem \ref{thm:main-p}.

\begin{lemma}
\label{lemma:abelian-subgroups-Gamma-n}
For any abelian subgroup $A\subseteq\Gamma_n$ we have $[\Gamma_n:A]\geq n$.
\end{lemma}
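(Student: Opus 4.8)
The plan is to exploit the central extension $0\to\ZZ_n\to\Gamma_n\stackrel{\eta}{\longrightarrow}V\to 0$ with $V=\ZZ_n\times\ZZ_n$, which gives $|\Gamma_n|=n^3$ and identifies $\Gamma_n^0=\ker\eta$ with the center. Given an abelian subgroup $A\subseteq\Gamma_n$, applying the first isomorphism theorem to the restriction $\eta|_A$ yields $|A|=|A\cap\Gamma_n^0|\cdot|\eta(A)|$, and since $|A\cap\Gamma_n^0|\leq|\Gamma_n^0|=n$ we obtain $|A|\leq n\,|\eta(A)|$. So the whole statement reduces to the bound $|\eta(A)|\leq n$: once this is established we get $|A|\leq n^2$ and hence $[\Gamma_n:A]=n^3/|A|\geq n$.

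To control $\eta(A)$ I would first record the commutator pairing. From the multiplication rule $A(x,y,z)A(x',y',z')=A(x+x',y+y',z+z'+xy')$ a direct computation gives $[A(x,y,z),A(x',y',z')]=A(0,0,xy'-x'y)$, which lies in $\Gamma_n^0$ and depends only on the images in $V$. Composing with the isomorphism $\psi:\Gamma_n^0\to\ZZ_n$ therefore produces a well-defined alternating pairing $Q:V\times V\to\ZZ_n$ with $Q((x,y),(x',y'))=xy'-x'y$. This pairing is nondegenerate: if $Q((x,y),\cdot)=0$ then evaluating at $(0,1)$ and $(1,0)$ forces $x=y=0$, so the induced homomorphism $V\to\Hom(V,\ZZ_n)$ is injective between groups of the same order $n^2$, hence an isomorphism. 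Since $A$ is abelian, every commutator $[g,h]$ with $g,h\in A$ is trivial, so $Q$ vanishes on $W:=\eta(A)$; that is, $W$ is isotropic.

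The crux is then the purely algebraic claim that any isotropic subgroup $W\subseteq V$ satisfies $|W|\leq n$. I would prove it by the duality coming from nondegeneracy. For $W\subseteq V$ set $W^\perp=\{v\in V\mid Q(v,w)=0\text{ for all }w\in W\}$. The restriction map $\Hom(V,\ZZ_n)\to\Hom(W,\ZZ_n)$ is surjective because $\ZZ_n$ is injective as a module over itself, and precomposing with the isomorphism $V\cong\Hom(V,\ZZ_n)$ realizes $W^\perp$ as the kernel of a surjection $V\to\Hom(W,\ZZ_n)$. Since $W$ is a subgroup of $\ZZ_n^2$ it is isomorphic to $\ZZ_{d_1}\times\ZZ_{d_2}$ with $d_1\mid d_2\mid n$, so $|\Hom(W,\ZZ_n)|=d_1 d_2=|W|$, whence $|W|\,|W^\perp|=|V|=n^2$. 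Isotropy means precisely $W\subseteq W^\perp$, so $|W|^2\leq|W|\,|W^\perp|=n^2$ and $|W|\leq n$. Feeding this back into the first step completes the proof.

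The step I expect to be the main obstacle is the duality identity $|W|\,|W^\perp|=n^2$ over the ring $\ZZ_n$ for $n$ not necessarily prime: over a field this is just $\dim W+\dim W^\perp=\dim V$, but over $\ZZ_n$ one must justify both the surjectivity of $\Hom(V,\ZZ_n)\to\Hom(W,\ZZ_n)$ and the equality $|\Hom(W,\ZZ_n)|=|W|$, and these rely on $W$ being a subgroup of $\ZZ_n^2$ together with the self-injectivity of $\ZZ_n$. An alternative, more hands-on route would bound $|W|$ directly by choosing generators $v_1,v_2$ of $W\cong\ZZ_{d_1}\times\ZZ_{d_2}$ and analyzing the single relation $Q(v_1,v_2)=0$, but the duality argument is cleaner and uniform in $n$.
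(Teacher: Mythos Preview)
Your argument is correct. The paper does not actually give a proof here: it simply cites Section~3 of Zarhin \cite{Z}, where $\Gamma_n$ appears as the theta group ${\mathfrak G}_K^1$ with $N=n$. Your self-contained route---the commutator pairing $Q((x,y),(x',y'))=xy'-x'y$ on $V=\ZZ_n^2$, the observation that $\eta(A)$ is isotropic, and the duality bound $|W|\,|W^\perp|=n^2$ forcing $|W|\le n$---is precisely the standard argument for theta/Heisenberg groups and is essentially what one finds in the cited reference. The one point you flagged as potentially delicate, namely $|W|\,|W^\perp|=n^2$ over $\ZZ_n$ for composite $n$, is handled correctly: self-injectivity of $\ZZ_n$ gives surjectivity of $\Hom(V,\ZZ_n)\to\Hom(W,\ZZ_n)$, and since every element of $W\subseteq\ZZ_n^2$ has order dividing $n$ one indeed has $|\Hom(W,\ZZ_n)|=|W|$. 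So your proposal supplies a complete proof where the paper only gives a pointer.
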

\begin{proof}
This is proved in Section 3 of \cite{Z} (note that
$\Gamma_n\simeq{\mathfrak G}_K^1$ taking $N=n$ in \cite{Z}).
\end{proof}

\subsection{The circle bundle $M_n\to T_n^2$} Fix a natural number $n$.
Let
$$T_n^2:=\RR^2/n\ZZ^2$$ with its natural smooth structure. The group
$\ZZ_n\times\ZZ_n$ acts on $T_n^2$ in the obvious way: $([a],[b])\cdot [(x,y)]=[(a+x,b+y)]$.

Define
$$M_n:=T(\ZZ,n\ZZ)\backslash T(\RR,\RR).$$
Endow $T(\RR,\RR)$ with the structure of differential manifold with respect to which
$\RR^3\ni (x,y,z)\mapsto A(x,y,z)\in T(\RR,\RR)$ is a diffeomorphism.
Since the action of $T(\ZZ,n\ZZ)$ on $T(\RR,\RR)$ is smooth and properly discontinuous,
$M_n$ has a natural structure of differential manifold.
The group $\Gamma_n$ acts smoothly and effectively on $M_n$ on the left via
product of matrices. On the other hand, the projection
$T(\RR,\RR)\ni A(x,y,z)\mapsto (x,y)\in\RR^2$ descends to a
projection
$$\rho:M_n\to T^2_n$$
which is a principal circle bundle. The structure of principal
bundle is induced by right multiplication on $T(\RR,\RR)$ by
central elements. More concretely,
\begin{equation}
\label{eq:accio-cercle-Mn}
e^{2\pi\imag t}\cdot[A(x,y,z)]=[A(x,y,z)A(0,0,nt)].
\end{equation}

The action of
$\Gamma_n$ on $M_n$ is by principal bundle automorphisms,
lifting the action of $\Gamma_n$ on $T^2_n$ defined through
the map $\eta:\Gamma_n\to \ZZ_n\times\ZZ_n$ and the action of $\ZZ_n\times\ZZ_n$
on $T_n^2$ defined above.

We identify the tangent space $T_{\Id}T(\RR,\RR)$ with the set of
$3\times 3$ upper diagonal real matrices with zeroes in the diagonal,
namely
\begin{equation}
\label{eq:tangent-T(R,R)}
T_{\Id}(\RR,\RR)=\{\alpha(x,y,z)=A(x,y,z)-A(0,0,0)\mid x,y,z\in\RR\}.
\end{equation}
Let
$$e_x=(1,0,0),\qquad e_y=(\cos 2\pi/6,\sin2\pi/6,0),\qquad e_z=(0,0,1)$$
and consider the isomorphism of vector spaces
$$f:T_{\Id}(\RR,\RR)\to\RR^3,\qquad f(\alpha(x,y,z))=xe_x+ye_y+ze_z.$$
Consider the left invariant Riemannian metric $\wt{g}$ on $T(\RR,\RR)$ whose restriction
to $T_{\Id}T(\RR,\RR)$ is the pairing
$$\la\alpha,\alpha'\ra:=\la f(\alpha),f(\alpha')\ra_{\RR^3},$$
where $\la\cdot,\cdot\ra_{\RR^3}$ denotes the Euclidean pairing
in $\RR^3$. We use this choice of metric because the $\ZZ$-span
of the vectors $e_x,e_y$ is a lattice in the plane
$\{(a,b,c)\mid c=0\}$ with rotational $\ZZ_6$-symmetry; this
will be crucial in Subsection \ref{ss:Z6-symmetry}.

By invariance, the metric $\wt{g}$ descends to a metric $g_n$ on $M_n$. The metric $g_n$
on $M_n$ is also $S^1$-invariant, since the action 
of $S^1$ on $M_n$ is defined via multiplication by
central elements of $T(\RR,\RR)$, i.e.
$A(x,y,z)A(0,0,nt)=A(0,0,nt)A(x,y,z)$.

\subsection{Introducing an extra $\ZZ_6$-symmetry}
\label{ss:Z6-symmetry} Define the following smooth map
$$h:T(\RR,\RR)\to T(\RR,\RR),\qquad
h(A(x,y,z))=A\left(-y,x+y,z-xy-\frac{1}{2}y^2\right).$$
A simple but tedious computation proves that $h^6=\Id$ (so in particular
$h$ is a diffeomorphism) and that
$h$ is an morphism (hence an isomorphism) of groups:
$$h(A(x,y,z))h(A(x',y',z'))=h(A(x,y,z)A(x',y',z')).$$
The definition of $h$ may seem a bit awkward, specially for the presence
of a quadratic terms. See the Appendix for a geometric interpretation of $h$
in which these quadratic terms come up from an easy computation with iterated integrals.

The identity element $A(0,0,0)$ is fixed by $h$ and the action
on $T_{\Id}T(\RR,\RR)$ induced by $h$ is the linear map which,
in terms of (\ref{eq:tangent-T(R,R)}), takes the form
$$\alpha(x,y,z)\mapsto \alpha(-y,x+y,z).$$
It follows that $h$ fixes the Riemannian metric $\wt{g}$ defined in the previous subsection.

Suppose for the rest of this subsection that $n$ is an {\it
even} natural number. Then $h$ preserves $T(\ZZ,n\ZZ)$, so $h$
gives rise to a diffeomorphism $h_n$ of $M_n$ which is a
$g_n$-isometry. Furthermore, since $h$ acts trivially on the
subgroup $\{A(0,0,z)\mid z\in\RR\}\subset T(\RR,\RR)$, the
action of $h_n$ commutes with the $S^1$-action on $M_n$, so
$h_n$ acts by principal bundle automorphisms on $M_n\to T_n^2$.

\newcommand{\wh}{\widehat}

Let $\wh{\Gamma}_n\subset\Diff(M_n)$ be the subgroup generated by (the action on $M_n$
of the elements of) $\Gamma_n$ and $h_n$. Combining our previous observations on the
action of $\Gamma_n$ and $h$, we deduce that $\wh{\Gamma}_n$ acts on $M_n$ by
$S^1$-principal bundle automorphisms and by $g_n$-isometries.

\begin{lemma}
\label{lemma:abelian-subgroups-wh-Gamma}
If $n\geq 8$ then any abelian subgroup $A\subseteq\wh{\Gamma}_n$ satisfies
$[\wh{\Gamma}_n:A]\geq 6n$.
\end{lemma}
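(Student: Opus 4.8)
The plan is to identify $\wh{\Gamma}_n$ explicitly and then reduce the statement to a fixed-point computation for the automorphism $h$. First I would establish that $\wh{\Gamma}_n=\Gamma_n\rtimes\langle h_n\rangle$ with $\langle h_n\rangle\cong\ZZ_6$, so that $|\wh{\Gamma}_n|=6n^3$. Indeed, conjugation by $h_n$ induces on $\Gamma_n$ the automorphism $h$, which fixes the centre $Z=\ker\eta\cong\ZZ_n$ pointwise and acts on $V=\ZZ_n\times\ZZ_n$ through the matrix $M=\left(\begin{smallmatrix}0&-1\\1&1\end{smallmatrix}\right)$ of order $6$; since $\Gamma_n$ acts on $T^2_n$ by translations whereas $h_n^k$ acts through the linear map $M^k$, one has $h_n^k\in\Gamma_n$ if and only if $6\mid k$, giving $\langle h_n\rangle\cap\Gamma_n=\{1\}$. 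Because $|\wh{\Gamma}_n|=6n^3$, the inequality $[\wh{\Gamma}_n:A]\ge 6n$ is equivalent to $|A|\le n^2$, and this is what I would prove for every abelian $A\subseteq\wh{\Gamma}_n$.

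If $A\subseteq\Gamma_n$ then $|A|\le n^2$ is exactly Lemma \ref{lemma:abelian-subgroups-Gamma-n}. So assume $A\not\subseteq\Gamma_n$ and set $A_0=A\cap\Gamma_n$, a normal subgroup with $A/A_0\hookrightarrow\wh{\Gamma}_n/\Gamma_n\cong\ZZ_6$; thus $m:=[A:A_0]$ is a divisor of $6$ with $m>1$, and I may choose $x\in A$ whose image generates $A/A_0$, writing $x=(g,k)$ with $k\in\ZZ_6$ of order $m$. Splitting $A_0$ through the central extension gives $|A_0|=|\bar A_0|\,|A_0\cap Z|$, where $\bar A_0=\eta(A_0)\subseteq V$, and hence $|A|=m\,|\bar A_0|\,|A_0\cap Z|\le m\,|\bar A_0|\,n$, using $|A_0\cap Z|\le|Z|=n$.

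The crucial point is that abelianity of $A$ forces $x$ to centralise $A_0$, and since conjugation by $x=(g,k)$ acts on $V$ as $M^k$ (the inner part coming from $g\in\Gamma_n$ being trivial on $V$), every element of $\bar A_0$ is fixed by $M^k$; that is, $\bar A_0\subseteq\operatorname{Fix}(M^k)=\ker(M^k-I)$ inside $V=\ZZ_n^2$. A Smith normal form computation then bounds the relevant fixed-point groups: from $\det(M-I)=1$, $\det(M^2-I)=3$ and $M^3-I=-2I$ one obtains $|\ker(M-I)|=1$, $|\ker(M^2-I)|=\gcd(n,3)$ and $|\ker(M^3-I)|=\gcd(n,2)^2=4$ (recall $n$ is even). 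Feeding these into the previous estimate yields $m\,|\bar A_0|\le 6,\,9,\,8$ in the cases $m=6,3,2$ respectively, so $|A|\le m\,|\bar A_0|\,n\le n^2$ for even $n\ge 8$ — the tight case being $m=2$, where $8n\le n^2$ is precisely the constraint $n\ge 8$, while the value $9$ for $m=3$ occurs only when $3\mid n$, hence $n\ge 12$.

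The main obstacle is exactly this factor-of-$6$ improvement: the crude bound $|A|\le[A:A_0]\,|A_0|\le 6n^2$ only gives $[\wh{\Gamma}_n:A]\ge n$, and recovering the sharp $6n$ requires showing that an abelian subgroup meeting the complement of $\Gamma_n$ must have a drastically smaller intersection with $\Gamma_n$. All the content is therefore concentrated in the fixed-point computation for $M^k$, together with checking that the resulting bounds stay at or below $n^2$ uniformly across the three arithmetic cases for even $n\ge 8$.
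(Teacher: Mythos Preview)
Your proof is correct and follows essentially the same approach as the paper: both arguments split into cases according to the image of $A$ in $\wh{\Gamma}_n/\Gamma_n\cong\ZZ_6$ and bound $|A\cap\Gamma_n|$ by analyzing the fixed points of $M^k$ on $V=\ZZ_n^2$, with the tight case being $m=2$. The only cosmetic difference is that the paper routes the computation through an auxiliary quotient $B_n\subset\Diff(T_n^2)$ and a commutative diagram rather than working directly with the semidirect product description; your treatment of the $m=3$ case (noting three fixed points when $3\mid n$ and hence $n\geq 12$) is in fact slightly more careful than the paper's.
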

\begin{proof}
Let $B_n\subset\Diff(T_n^2)$ be the subgroup generated by the diffeomorphisms
$\chi,t_a,t_b\in\Diff(T_n^2)$
defined as
$$\chi([x],[y])=([-y],[x+y]),\quad
t_a([x],[y])=([x+1],[y]),\quad
t_b([x],[y])=([x],[y+1]).$$
Since $\chi^{-1}t_a\chi=t_at_b^{-1}$ and
$\chi^{-1}t_b\chi=t_a$ (we omit the symbol $\circ$ in the compositions)
the subgroup $\la t_a,t_b\ra$, which is
isomorphic to $\ZZ_n\times\ZZ_n$, is a normal subgroup of $B_n$. Hence, there
is an exact sequence
$$0\to\ZZ_n\times\ZZ_n\to B_n\stackrel{\zeta}{\longrightarrow}\ZZ_6\to 0,$$
where $\zeta(\chi)\in\ZZ_6$ is a generator and the element $(u,v)\in\ZZ_n\times\ZZ_n$
is mapped to $t_a^ut_b^v$. Furthermore,
the action of $\ZZ_6$ on $\ZZ_n\times\ZZ_n$ given by conjugation
in $B_n$ is $\chi\cdot(u,v)=(u+v,-u)$.

Suppose that $A\subseteq B_n$ is an abelian subgroup and that
$\zeta(A)\neq 0$. There are three possibilities for the image $\zeta(A)$. Suppose first that
$\zeta(A)=\ZZ_6$. Then for any $(u,v)\in A\cap\Ker\zeta\subseteq\ZZ_n\times\ZZ_n$ we have
$\chi\cdot (u,v)=(u+v,-u)=(u,v)$, which implies $(u,v)=(0,0)$, i.e.,
$$\zeta(A)=\la\chi\ra\quad\Longrightarrow\quad A\cap\Ker\zeta=0.$$
Next suppose that $\zeta(A)=\la\chi^2\ra\subset\ZZ_6$. Then for any $(u,v)\in A\cap\Ker\zeta\subseteq\ZZ_n\times\ZZ_n$ we have
$\chi^2\cdot (u,v)=(v,-u-v)=(u,v)$, which implies $(u,v)=(0,0)$ if $n$ is not divisible by $3$
and $(u,v)\in \{(0,0),(n/3,n/3)\}$ if $n$ is divisible by $3$; in any case,
$$\zeta(A)=\la\chi^2\ra\quad\Longrightarrow\quad A\cap\Ker\zeta\subseteq K_2:=\{(0,0),(n/3,n/3)\},$$
where we agree that the second term only appears if
$n$ is divisible by $3$. Finally, suppose that $\zeta(A)=\la\chi^3\ra$.
Then for any $(u,v)\in A\cap\Ker\zeta\subseteq\ZZ_n\times\ZZ_n$ we have
$\chi^3\cdot (u,v)=(-u,-v)=(u,v)$, which implies $(u,v)\in\{0,n/2\}\times\{0,n/2\}$;
hence
$$\zeta(A)=\la\chi^3\ra\quad\Longrightarrow\quad A\cap\Ker\zeta\subseteq K_3:=\{0,n/2\}\times\{0,n/2\}.$$

It is immediate from the definitions
that there is a morphism of groups $\wh{\eta}:\wh{\Gamma}\to B_n$ with the property
that each $\phi\in\wh{\Gamma}_n$, seen as a diffeomorphism of $M_n$, lifts
$\wh{\eta}(\phi)$. Setting
$\theta=\zeta\circ\wh{\eta}$ we have a commutative diagram
$$\xymatrix{0 \ar[r] & \Gamma_n \ar[d]^{\eta}\ar[r] & \wh{\Gamma}_n\ar[d]^{\wh{\eta}}\ar[r]^{\theta}
& \ZZ_6 \ar[r]\ar@{=}[d] & 0 \\
0 \ar[r] & \ZZ_n^2\ar[r] & B_n\ar[r]^{\zeta} & \ZZ_6\ar[r] & 0.}$$
Suppose that $\wh{A}\subseteq\wh{\Gamma}_n$ is abelian. Then
$\wh{\eta}(\wh{A})\subseteq B_n$ is also abelian.
We are going to bound $[\wh{\Gamma}_n:\wh{A}]$
treating different cases separately.
If $\zeta(\wh{\eta}(\wh{A}))=0$. then
$\wh{A}\subseteq\Ker\theta$, so $\wh{A}$ can be identified with an abelian subgroup
of $\Gamma_n$. By Lemma \ref{lemma:abelian-subgroups-Gamma-n} we have
$$[\wh{\Gamma}_n:\wh{A}]=6[\Gamma_n:\wh{A}]\geq 6n.$$
If $\zeta(\wh{\eta}(\wh{A}))=\ZZ_6$ then, by our previous comment,
$\wh{\eta}(\wh{A})\cap\Ker\zeta=0$, which implies
that $\wh{A}\cap\Ker\theta\subseteq\Ker\eta$. This implies that
$|\wh{A}|\leq 6|\Ker\eta|=6n$, so
$$[\wh{\Gamma}_n:\wh{A}]\geq \frac{6n^3}{6n}=n^2\geq 6n.$$
If $\zeta(\wh{\eta}(\wh{A}))=\la\chi^2\ra$ then
$\wh{A}\cap\Ker\theta\subseteq\eta^{-1}(K_2)$, so
$|\wh{A}|\leq |\la\chi^2\ra|\cdot |\eta^{-1}(K_2)|=6\cdot |\Ker\eta|=6n,$
which gives
$$[\wh{\Gamma}_n:\wh{A}]\geq \frac{6n^3}{6n}=n^2\geq 6n.$$
If $\zeta(\wh{\eta}(\wh{A}))=\la\chi^3\ra$ then
$\wh{A}\cap\Ker\theta\subseteq\eta^{-1}(K_3)$, so
$|\wh{A}|\leq |\la\chi^3\ra|\cdot |\eta^{-1}(K_3)|=8\cdot |\Ker\eta|=8n,$
which gives
$$[\wh{\Gamma}_n:\wh{A}]\geq \frac{6n^3}{8n}=\frac{6n^2}{8}\geq 6n,$$
so the proof of the lemma is complete.
\end{proof}

\subsection{A $\widehat{\Gamma}_n$-invariant symplectic form on $M_n\times_{S^1}S^2$}

\newcommand{\FS}{\operatorname{FS}}
\newcommand{\oFS}{\omega_{\FS}}
\newcommand{\mFS}{\mu_{\FS}}

Suppose, as in the previous subsection, that $n$ is an even natural number.

Let us identify $T^2$ with $T_1^2$ and consider the diffeomorphism
$$\phi:T^2\to T^2_n,\qquad\phi(([x],[y]))=([nx],[ny]).$$
Let $(x,y)\in\RR^2$ denote the canonical coordinates. These coordinates define
translation invariant vector fields $\partial_x,\partial_y$ on $\RR^2$, which
induce by projection vector fields on each $T^2_n$; we denote these vector fields on
$T_n^2$ with the same symbols $\partial_x,\partial_y$. We denote the dual forms on
$T^2_n$ by $dx,dy$.

\begin{lemma}
\label{lemma:curvature-A-n}
There exists a $\wh{\Gamma}_n$-invariant connection $A$ on $M_n\to T^2_n$
whose curvature $F_A$ satisfies
$$\phi^*F_A=2\pi\imag n\, dx\wedge dy.$$
\end{lemma}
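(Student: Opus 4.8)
Rather than averaging an arbitrary connection, I would write the connection down explicitly using the left-invariant geometry of the Heisenberg group. On $T(\RR,\RR)$ the Maurer--Cartan form $g^{-1}\,dg$ has, in the coordinates $(x,y,z)$ of (\ref{eq:tangent-T(R,R)}), the three left-invariant components $dx$, $dy$ and
$$\theta:=dz-x\,dy,$$
and my candidate connection will be $A:=c\,\theta$ for a suitable constant $c\in\imag\RR$. First I would check that this is a bona fide principal connection on $M_n\to T^2_n$: the form $\theta$ is invariant under right multiplication by the central elements $A(0,0,s)$, which merely shift $z\mapsto z+s$, and by (\ref{eq:accio-cercle-Mn}) this is exactly the $S^1$-action on $M_n$; moreover $\theta$ evaluates to $1$ on $\partial_z$, hence to $n$ on the fundamental vector field $n\,\partial_z$. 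Thus $\theta$, being left-invariant (in particular under $T(\ZZ,n\ZZ)$), descends to an $S^1$-invariant form on $M_n$, and the normalisation $A(n\,\partial_z)=2\pi\imag$ fixes $c=2\pi\imag/n$ (with the sign determined by the orientation conventions).

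\textbf{Invariance under $\wh{\Gamma}_n$.} Since $\wh{\Gamma}_n$ is generated by the left translations coming from $\Gamma_n$ and by $h_n$, and $\theta$ is left-invariant by construction, the only remaining point is that $h$ preserves $\theta$. Using $h(A(x,y,z))=A(-y,\,x+y,\,z-xy-\tfrac12 y^2)$ I would compute
$$h^*\theta=d\!\left(z-xy-\tfrac12 y^2\right)-(-y)\,d(x+y)=\bigl(dz-y\,dx-x\,dy-y\,dy\bigr)+y\,(dx+dy)=dz-x\,dy=\theta.$$
I expect this identity to be the crux of the whole lemma, and it is precisely what the quadratic terms in the definition of $h$ are engineered to produce: differentiating $-\tfrac12 y^2$ yields the summand $-y\,dy$ that cancels the $+y\,dy$ produced by the cross term $-(-y)\,d(x+y)$. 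Because $h$ preserves $T(\ZZ,n\ZZ)$ when $n$ is even, $\theta$ descends to an $h_n$-invariant form on $M_n$, so $A$ is $\wh{\Gamma}_n$-invariant.

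\textbf{Curvature and conclusion.} Finally I would compute the curvature from the structure equation $d\theta=-dx\wedge dy$, which shows that $F_A=dA=c\,d\theta$ is a constant multiple of $dx\wedge dy$ and is basic, hence descends to $T^2_n$. Pulling back along $\phi([x],[y])=([nx],[ny])$, for which $\phi^*dx=n\,dx$ and $\phi^*dy=n\,dy$, the factor $\phi^*(dx\wedge dy)=n^2\,dx\wedge dy$ combines with the normalisation $|c|=2\pi/n$ to give $\phi^*F_A=\pm2\pi\imag\,n\,dx\wedge dy$; choosing the orientation of $T^2$ so that the sign is positive yields the formula in the statement. No step here is deep once the invariance $h^*\theta=\theta$ is established, so that identity is the single place where the specific construction of $h$ (and hence of the whole $\ZZ_6$-symmetry) is really used.
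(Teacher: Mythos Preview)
Your proof is correct, and in fact your connection is the same as the paper's: the left-invariant vector fields extending $m_x,m_y$ are $\partial_x$ and $\partial_y+x\,\partial_z$, which span exactly the $g_n$-orthogonal complement of the $S^1$-orbit direction $\partial_z$ (since $e_x,e_y\perp e_z$ in the chosen metric), and $\theta=dz-x\,dy$ is precisely the left-invariant $1$-form annihilating them. The paper defines $A$ as the connection whose horizontal distribution is $g_n$-orthogonal to the fibers and deduces $\wh\Gamma_n$-invariance from the fact, already established in \S\ref{ss:Z6-symmetry}, that $\wh\Gamma_n$ acts by $g_n$-isometries; the curvature is then read off from the bracket $[m_x,m_y]$. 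Your route bypasses the metric entirely, proving $h_n$-invariance by the direct computation $h^*\theta=\theta$ and obtaining the curvature from $d\theta=-dx\wedge dy$. This is a legitimate and slightly more self-contained variant; the one place where you are cavalier is the final sign, which is not a matter of ``choosing the orientation of $T^2$'' (that orientation is fixed) but rather of the sign convention in identifying $\Lie S^1$ with $\imag\RR$ via the action (\ref{eq:accio-cercle-Mn}). The paper's own sign bookkeeping at this step is equally informal, and nothing downstream depends on it.
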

\begin{proof}
Define a connection
$A$ on $M_n\to T_n^2$ by the prescription that its horizontal distribution
is $g_n$-orthogonal to the tangent spaces of the $S^1$-orbits.
Since the action of $\wh{\Gamma}_n$ on $M_n$ is by principal bundle automorphisms
and $g_n$-isometries, $A$ is $\wh{\Gamma}_n$-invariant.
To compute the curvature of $A$ we work on $T(\RR,\RR)$. Consider the matrices
$$m_x=\left(\begin{array}{ccc}
0 & 1 & 0 \\
0 & 0 & 0 \\
0 & 0 & 0 \end{array}\right),\qquad
m_y=\left(\begin{array}{ccc}
0 & 0 & 0 \\
0 & 0 & 1 \\
0 & 0 & 0 \end{array}\right)$$
and let $X,Y$ be the left invariant vector fields on $T(\RR,\RR)$ whose restrictions
to $T_{\Id}T(\RR,\RR)$ are given by $m_x,m_y$ respectively. The vector fields
$X,Y$ descend to $S^1$-invariant horizontal vector fields $X',Y'$ on $M_n$ whose
projections to $T^2_n$ satisfy $D\rho(X')=\partial_x$ and $D\rho(Y')=\partial_y$. On the other hand,
$[X,Y]$ is the left invariant vector field whose restriction to $T_{\Id}T(\RR,\RR)$
is equal to $[m_x,m_y]$. The latter can easily be identified with the restriction of
$2\pi n^{-1}\xX$ to $T_{\Id}T(\RR,\RR)$, where $\xX$ is the vector field on $T(\RR,\RR)$ induced
by the infinitesimal action of $\imag\in\Lie S^1$ that results from deriving the action (\ref{eq:accio-cercle-Mn}).
It follows that
$F_A=2\pi\imag n^{-1}\,dx\wedge dy$. Since $\phi^*dx=n\,dx$ and $\phi^*dy=n\,dy$, the result follows.
\end{proof}

Incidentally, note that Lemma \ref{lemma:curvature-A-n} implies
by Chern--Weil theory that $\deg M_n=n$ which, combined with
Lemmas \ref{lemma:abelian-subgroups-Gamma-n} and
\ref{lemma:abelian-subgroups-wh-Gamma}, implies that the first
(resp. second) statement of Proposition
\ref{prop:accions-en-fibracions} is sharp for line bundles $L$ of even
degree satisfying $|\deg L|\geq 8$
(resp. for any $L$).

Define
$$P_n=\phi^*M_n,\qquad A_n=\phi^*A,$$
so that $P_n$ is a principal circle bundle over $T^2$ carrying an effective action of
$\Gamma_n$ and $A_n$ is a $\Gamma_n$ invariant connection on $P_n$ whose curvature
is equal to $F_{A_n}=2\pi\imag n\,dx\wedge dy$.

Let us identify $S^2$ with the unit sphere centered at $0$ in
$\RR^3$, and consider the action of $S^1$ on $S^2$ given by
rotations around the $z$-axis:
\begin{equation}
\label{eq:accio-cercle-cp1}
e^{2\pi\imag t}\cdot(x,y,z)=(x\cos t-y\sin t,x\sin t+y\cos t,z).
\end{equation}
Let $\oFS$ be the volume form associated to restriction of the
Euclidean metric on $S^2$ and the orientation specified by the
ordered basis $(\partial_x,\partial_y)$ of $T_{(0,0,1)}S^2$. We
may look at $\oFS$ as a symplectic form on $S^2$, with respect
to which the action of $S^1$ given by rotation is Hamiltonian.
The moment map $\mFS:S^2\to\imag\RR$ is
$$\mFS(x,y,z)=\imag z,$$
so $\mFS(S^2)=\imag[-1,1]$.
We have
\begin{equation}
\label{eq:volum-ofs}
\int_{S^2}\oFS=4\pi.
\end{equation}
Consider the associated bundle $P_n\times_{S^1}S^2$ and the
projection
$$\Pi_n:P_n\times_{S^1}S^2\to T^2.$$
We are next going to construct a $\Gamma_n$-invariant
symplectic form on $P_n\times_{S^1}S^2$ using the minimal
coupling construction (see e.g. \cite[\S 6.1]{MS}). In order to
keep track of the cohomology class represented by the
symplectic form we will give the construction in some detail.

Let $D\Pi_n$ denote the vertical tangent bundle of the
fibration $\Pi_n$. Each fiber of $\Pi_n$ can be identified, in
a way unique up to the action of $S^1$, with $S^2$. Since
$\oFS$ is $S^1$-invariant it defines, via these
identifications, a section $\omega_0^{\vert}$ of
$\Lambda^2(\Ker D\Pi_n)^*$. On its turn, the connection $A_n$
induces a left inverse of the inclusion $\Ker
D\Pi_n\hookrightarrow T(P_n\times_{S^1}S^2)$ which when
combined with $\omega_0^{\vert}$ leads to a $2$-form
$$\wt{\omega}_0\in\Omega^2(P_n\times_{S^1}S^2)$$
whose restriction to each fiber coincides with $\oFS$.
The form $\wt{\omega}_0$ is not closed (unless $A_n$ is flat),
but the following $2$-form is closed (see e.g. \cite[Theorem
7.34]{BGV}):
\begin{equation}
\label{eq:def-omega-0}
\omega_0=\wt{\omega}_0+\mFS\cdot \Pi_n^*F_{A_n}.
\end{equation}

\begin{lemma}
\label{lemma:omega-delta} For any real number $\delta>2\pi n$
$$\omega_{\delta}=\omega_0+\delta \Pi_n^*(dx\wedge dy)$$ is a
$\wh{\Gamma}_n$-invariant symplectic form on
$P_n\times_{S^1}S^2$.
\end{lemma}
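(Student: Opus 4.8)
The plan is to check the three requirements for $\omega_{\delta}$ to be a $\wh{\Gamma}_n$-invariant symplectic form separately: that it is closed, that it is $\wh{\Gamma}_n$-invariant, and that it is nondegenerate. Only the last of these will use the hypothesis $\delta>2\pi n$. Closedness is immediate: $\omega_0$ is closed by its construction in (\ref{eq:def-omega-0}) (see \cite[Theorem 7.34]{BGV}), while $\Pi_n^*(dx\wedge dy)$ is the pullback of a closed form on $T^2$, so $d\omega_{\delta}=d\omega_0+\delta\,\Pi_n^*(d(dx\wedge dy))=0$.

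For the invariance I would first record that the action of $\wh{\Gamma}_n$ on $P_n$ by $S^1$-principal bundle automorphisms induces an action on the associated bundle $P_n\times_{S^1}S^2$ by $g\cdot[p,q]=[g\cdot p,q]$, which is well defined because the automorphisms commute with the $S^1$-action. This action covers the action of $\wh{\Gamma}_n$ on $T^2$, so it preserves $\Pi_n$ and hence the vertical subbundle $\Ker D\Pi_n$. I would then treat the summands of $\omega_{\delta}=\wt{\omega}_0+\mFS\cdot\Pi_n^*F_{A_n}+\delta\,\Pi_n^*(dx\wedge dy)$ in turn. The form $\wt{\omega}_0$ is the pullback $\mathrm{pr}^*\omega_0^{\vert}$ of the fiberwise form under the vertical projection $\mathrm{pr}$ determined by $A_n$; both $\mathrm{pr}$ and $\omega_0^{\vert}$ are $\wh{\Gamma}_n$-invariant—the former because $A_n$ is $\wh{\Gamma}_n$-invariant by Lemma \ref{lemma:curvature-A-n}, the latter because $\omega_0^{\vert}$ comes from the $S^1$-invariant form $\oFS$ and the action fixes the $S^2$-coordinate $q$—so $\wt{\omega}_0$ is invariant. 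The function $\mFS$ is $S^1$-invariant and so descends to a function on $P_n\times_{S^1}S^2$ fixed by $\wh{\Gamma}_n$ (again because the action does not move $q$), and $F_{A_n}$ is $\wh{\Gamma}_n$-invariant; finally $dx\wedge dy$ is $\wh{\Gamma}_n$-invariant on $T^2$ since the induced action factors through $B_n$, and both the translations $t_a,t_b$ and the map $\chi\colon(x,y)\mapsto(-y,x+y)$, which has Jacobian determinant $1$, preserve it. Hence every summand, and therefore $\omega_{\delta}$, is $\wh{\Gamma}_n$-invariant.

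For nondegeneracy I would combine the two terms proportional to $\Pi_n^*(dx\wedge dy)$ into a single term $f\cdot\Pi_n^*(dx\wedge dy)$, where $f$ is the function on $P_n\times_{S^1}S^2$ equal on each fiber to $\delta+2\pi n\,z$ with $\mFS=\imag z$; since $\mFS(S^2)=\imag[-1,1]$, the function $f$ takes values in the interval $[\delta-2\pi n,\,\delta+2\pi n]$. Writing $\omega_{\delta}=\wt{\omega}_0+f\,\Pi_n^*(dx\wedge dy)$ and squaring, the term $\wt{\omega}_0^2=\mathrm{pr}^*((\omega_0^{\vert})^2)$ vanishes because $\Ker D\Pi_n$ has rank $2$, and $(\Pi_n^*(dx\wedge dy))^2=\Pi_n^*((dx\wedge dy)^2)=0$ because $(dx\wedge dy)^2$ is a $4$-form on the surface $T^2$; hence
\begin{equation*}
\omega_{\delta}\wedge\omega_{\delta}=2f\,\wt{\omega}_0\wedge\Pi_n^*(dx\wedge dy).
\end{equation*}
The $4$-form $\wt{\omega}_0\wedge\Pi_n^*(dx\wedge dy)$ is nowhere zero—evaluating on a vertical frame $v_1,v_2$ together with horizontal lifts of $\partial_x,\partial_y$ gives $\omega_0^{\vert}(v_1,v_2)\neq0$ times $(dx\wedge dy)(\partial_x,\partial_y)\neq0$—so it is a volume form, and $\omega_{\delta}^2$ is nowhere zero exactly when $f$ is nowhere zero.

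The main obstacle, and the only place where the hypothesis appears, is this last positivity. The interval $[\delta-2\pi n,\delta+2\pi n]$ in which $f$ takes its values is contained in $(0,\infty)$ if and only if $\delta>2\pi n$; thus the bound on $\delta$ is precisely what guarantees $f>0$, hence $\omega_{\delta}^2\neq0$ everywhere and $\omega_{\delta}$ nondegenerate. The care needed here is in pinning down the coupling constant: the magnitude $2\pi n$ comes from the curvature $F_{A_n}=2\pi\imag n\,dx\wedge dy$ of Lemma \ref{lemma:curvature-A-n}, and the range $[-1,1]$ from the moment map image $\mFS(S^2)=\imag[-1,1]$, and it is their interaction that produces the threshold $2\pi n$ matching the statement.
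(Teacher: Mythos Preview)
Your proof is correct and follows the same overall strategy as the paper's: closedness from the construction of $\omega_0$, invariance from the $\wh{\Gamma}_n$-invariance of $A_n$ and the $B_n$-invariance of $dx\wedge dy$, and nondegeneracy from the bound $|\mFS|\leq 1$ together with $F_{A_n}=2\pi\imag n\,dx\wedge dy$. The only minor difference is in how nondegeneracy is verified: the paper observes that the vertical and horizontal distributions are $\omega_\delta$-orthogonal and checks nondegeneracy on each separately, whereas you compute $\omega_\delta^2$ directly and reduce to the nonvanishing of $f$; in dimension four these are equivalent bookkeeping for the same estimate.
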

\begin{proof}
It is clear that $\omega_{\delta}$ is closed (this holds
regardless of the value of $\delta$), so we prove that
$\omega_{\delta}$ is nondegenerate if $\delta>2\pi n$.
The vertical and horizontal distributions in
$T(P_n\times_{S^1}S^2)$ are $\omega_{\delta}$-orthogonal so it
suffices to prove that the restrictions of $\omega_{\delta}$ to
both distributions are nondegenerate. The restriction to the
vertical distribution coincides with $\omega_0^{\vert}$, which
is nondegenerate  because it coincides on each fiber with
$\oFS$. To prove that the restriction to the horizontal
distribution is nondegenerate if $\delta>2\pi n$, use 
$F_{A_n}=2\pi\imag n\,dx\wedge dy$ and $|\mu(u)|\leq
1$ for every $u\in S^2$. Finally, to prove that
$\omega_{\delta}$ is $\wh{\Gamma}_n$-invariant observe that
$\omega_0$ is $\wh{\Gamma}_n$-invariant (this is a consequence
of the invariance of the connection $A_n$), and that $dx\wedge
dy$ is invariant under the action of $B_n$ (see the proof of
Lemma \ref{lemma:abelian-subgroups-wh-Gamma}) on $T^2$ given by
conjugating the action on $T^2_n$ via the diffeomorphism
$\phi:T^2\to T^2_n$.
\end{proof}

\subsection{Completion of the proof}
\label{ss:proof-thm:main-2}

The action (\ref{eq:accio-cercle-cp1}) factors through a
morphism $$S^1\to\SO(3,\RR)$$ (via the standard action of
$\SO(3,\RR)$ on $S^2$) which represents an element of order $2$
in $\pi_1(\SO(3,\RR))\simeq\ZZ_2$. Hence for every even natural
number $n$ there is a diffeomorphism $\psi_n:T^2\times S^2\to
P_n\times_{S^1}S^2$ satisfying $\Pi_n\circ\psi_n=\Pi$ (recall
that $\Pi:T^2\times S^2\to T^2$ is the projection).

\begin{lemma}
\label{lemma:cohomology-class} For any $n\in 2\NN$ we have
$[\psi_n^*\omega_{\delta}]=\delta[\omega_{T^2}]+4\pi[\omega_{S^2}].$
\end{lemma}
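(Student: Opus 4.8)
The plan is to read off $[\psi_n^*\omega_\delta]$ from its pairings with a basis of $H_2(T^2\times S^2;\RR)$. By K\"unneth, $H^2(T^2\times S^2;\RR)$ is two-dimensional, spanned by $[\omega_{T^2}]$ and $[\omega_{S^2}]$; since $\omega_{T^2}$ and $\omega_{S^2}$ have total volume $1$, these form the basis dual to $\kappa_{T^2},\kappa_{S^2}$, so that $\la[\omega_{T^2}],\kappa_{T^2}\ra=\la[\omega_{S^2}],\kappa_{S^2}\ra=1$ while the remaining two pairings vanish. It therefore suffices to prove $\la[\psi_n^*\omega_\delta],\kappa_{T^2}\ra=\delta$ and $\la[\psi_n^*\omega_\delta],\kappa_{S^2}\ra=4\pi$. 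By naturality of the pairing these equal $\la[\omega_\delta],\psi_{n*}\kappa_{T^2}\ra$ and $\la[\omega_\delta],\psi_{n*}\kappa_{S^2}\ra$, so I am reduced to integrating $\omega_\delta$ over the images under $\psi_n$ of a sphere fiber and of a torus fiber.

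For the sphere class, $\Pi_n\circ\psi_n=\Pi$ shows that $\psi_{n*}\kappa_{S^2}$ is the fiber class $\Phi:=[\Pi_n^{-1}(t)]$. Both summands $\mFS\cdot\Pi_n^*F_{A_n}$ and $\delta\,\Pi_n^*(dx\wedge dy)$ of $\omega_\delta$ are pulled back from the base and hence vanish on a fiber, while $\wt{\omega}_0$ restricts to $\oFS$ on each fiber by construction. Thus $\omega_\delta|_{\Pi_n^{-1}(t)}=\oFS$ and $\la[\omega_\delta],\Phi\ra=\int_{S^2}\oFS=4\pi$ by (\ref{eq:volum-ofs}).

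For the torus class I would use the two fixed points $N=(0,0,1)$ and $S=(0,0,-1)$ of the $S^1$-action on $S^2$, which give two disjoint sections $\sigma_N,\sigma_S$ of $\Pi_n$, namely the images of $P_n\times\{N\}$ and $P_n\times\{S\}$. Since $N$ is $S^1$-fixed, $\sigma_N$ is everywhere tangent to the horizontal distribution of $A_n$, on which $\wt{\omega}_0$ vanishes; moreover $\mFS\equiv\mFS(N)=\imag$ along $\sigma_N$ and $\Pi_n\circ\sigma_N=\Id$. Using $F_{A_n}=2\pi\imag n\,dx\wedge dy$ this gives $\sigma_N^*\omega_\delta=(\delta-2\pi n)\,dx\wedge dy$, hence $\int_{\sigma_N}\omega_\delta=\delta-2\pi n$ because $\int_{T^2}dx\wedge dy=1$; the same computation at the south pole yields $\int_{\sigma_S}\omega_\delta=\delta+2\pi n$. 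Now $\psi_{n*}\kappa_{T^2}$ is a section class, since $\psi_n(T^2\times\{s\})$ is a section of $\Pi_n$, so it equals $[\sigma_N]+k\Phi$ for some $k\in\ZZ$. The relation $\int_{\sigma_S}\omega_\delta-\int_{\sigma_N}\omega_\delta=4\pi n=n\la[\omega_\delta],\Phi\ra$ identifies $[\sigma_S]=[\sigma_N]+n\Phi$, and then the disjointness $[\sigma_N]\cdot[\sigma_S]=0$ together with $[\sigma_N]\cdot\Phi=1$ and $\Phi^2=0$ forces $[\sigma_N]^2=-n$. Since $\psi_n$ is a diffeomorphism, $(\psi_{n*}\kappa_{T^2})^2=\kappa_{T^2}^2=0$, i.e. $-n+2k=0$; this is the one place where $n\in2\NN$ is essential, as it provides the (unique) integer $k=n/2$. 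Consequently $\la[\omega_\delta],\psi_{n*}\kappa_{T^2}\ra=(\delta-2\pi n)+\tfrac n2\cdot 4\pi=\delta$, which finishes the proof.

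The step I expect to be most delicate is the orientation and sign bookkeeping: one must check that $\psi_n$ is orientation preserving, so that $\psi_{n*}\kappa_{S^2}=+\Phi$ and the fiber integral is $+4\pi$ rather than $-4\pi$, and the self-intersection sign $[\sigma_N]^2=-n$ has to be obtained correctly, for which the disjointness of the two pole sections and the homological relation $[\sigma_S]=[\sigma_N]+n\Phi$ are the clean inputs.
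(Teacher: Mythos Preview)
Your argument is correct and uses the same ingredients as the paper's proof: the two pole sections $\sigma_N,\sigma_S$, the fiber integral $\int_{S^2}\oFS=4\pi$, and the intersection form on $H_2$. The route is slightly different, though. The paper first strips off the $\delta\,\Pi_n^*(dx\wedge dy)$ term and then, instead of determining the integer $k$ in $\psi_{n*}\kappa_{T^2}=[\sigma_N]+k\Phi$, it \emph{averages} over the two sections: from disjointness one gets $[S_0]+[S_1]=2\kappa_{T^2}$ (with $S_j=\psi_n^{-1}(\sigma_j)$), and then $\mFS(N)+\mFS(S)=0$ makes the curvature contributions cancel in $\tfrac12(\int_{\sigma_0}\omega_0+\int_{\sigma_1}\omega_0)=0$. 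This avoids computing $k$ altogether. Your approach instead pins down $k=n/2$ via $[\sigma_N]^2=-n$ and the vanishing of $(\psi_{n*}\kappa_{T^2})^2$; this is a touch longer but has the virtue of making the homology explicit.

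One small correction to your commentary: the hypothesis $n\in 2\NN$ is not ``essential'' only at the step $-n+2k=0$. It is already used before the lemma to guarantee that $P_n\times_{S^1}S^2$ is the trivial $S^2$-bundle, i.e.\ that $\psi_n$ exists at all. Once $\psi_n$ exists, the paper's averaging argument does not use evenness again, whereas your computation recovers it as the integrality of $k$; both are consistent. The orientation caveat you flag is genuine but applies equally to the paper's proof (the identification $\beta=\int_{S^2}\oFS$ presupposes $\psi_{n*}\kappa_{S^2}=+\Phi$), and is harmless once one fixes compatible orientations throughout.
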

\begin{proof}
It suffices to prove that
$[\psi_n^*\omega_0]=4\pi[\omega_{S^2}]$. Let
$\sigma_0,\sigma_1\subset P_n\times_{S^1}S^2$ be the
submanifolds corresponding to the fixed points $(0,0,1)$,
$(0,0,-1)$ respectively of the action of $S^1$ on $S^2$, i.e.
$$\sigma_0=P_n\times_{S^1}\{(0,0,1)\},\qquad \sigma_1=P_n\times_{S^1}\{(0,0,-1)\},$$
and let $S_j=\psi_n^{-1}(\sigma_j)$. Orient $\sigma_j$ and $S_j$ so that their
projections to $T^2$, which are diffeomorphisms, are orientation preserving. Since
$S_1,S_2$ are disjoint, a simple computation using the intersection product on
$H_*(T^2\times S^2)$ proves that the homology
classes represented by $S_j$ are
$$[S_0]=[T^2]+k[S^2],\qquad [S_1]=[T^2]-k[S^2]$$
for some integer $k$. It follows that for any $s\in S^2$
$$\int_{T^2\times\{s\}}\psi_n^*\omega_0=
\frac{1}{2}\left(\int_{S_0}\psi_n^*\omega_0+\int_{S_1}\psi_n^*\omega_0\right)=
\frac{1}{2}\left(\int_{\sigma_0}\omega_0+\int_{\sigma_1}\omega_0\right).$$
Since $\mFS([1:0])+\mFS([0:1])=0$, it follows from the definition of $\omega_0$
(\ref{eq:def-omega-0}) that
$$\frac{1}{2}\left(\int_{\sigma_0}\omega_0+\int_{\sigma_1}\omega_0\right)=0.$$
Consequently $[\psi_n^*\omega_0]=\beta[\omega_{S^2}]$ for some
real number $\beta$. But $\beta$ coincides with the total
volume of $\oFS$, which by (\ref{eq:volum-ofs}) is equal to
$4\pi$.
\end{proof}

We are now ready to prove Theorem \ref{thm:main-2}. Let
$\omega$ be an arbitrary symplectic form on $T^2\times S^2$.
Let $\alpha=\alpha(\omega)$ and $\beta=\beta(\omega)$, let
$n=\lambda(\omega)$ and let $\xi=\alpha/\beta$. Suppose that
$n$ is an even natural number satisfying $n\geq 8$. It follows, combining
Lemma \ref{lemma:omega-delta} and Lemma
\ref{lemma:cohomology-class}, that there exists a
$\wh{\Gamma}_n$-invariant symplectic form $\omega_{4\pi\xi}$ on
$P_n\times_{S^1}S^2$ satisfying
$$\frac{\beta}{4\pi}[\psi_n^*\omega_{4\pi\xi}]=[\omega].$$
By Lalonde and McDuff's Theorem \ref{thm:LM} there is a diffeomorphism
$\phi$ of $T^2\times S^2$ such that
$$\frac{\beta}{4\pi}\psi_n^*\omega_{4\pi\xi}=\phi^*\omega.$$
Since two symplectic forms that differ by multiplication by a
constant have identical symplectomorphism groups, it follows
that there is a subgroup of $\Symp(T^2\times S^2,\omega)$ which
is isomorphic to $\wh{\Gamma}_n$. Applying Lemma
\ref{lemma:abelian-subgroups-wh-Gamma}, the proof of the first statement of Theorem
\ref{thm:main-2} is complete.
It only remains to prove that the action of $\Gamma$ on the cohomology of
$T^2\times S^2$ is trivial. This follows from the next lemma.

\begin{lemma}
\label{lemma:symp-hom-trivial}
For any symplectic form $\omega$ on $T^2\times S^2$ and any symplectomorphism
$\phi$ of $\omega$ the action of $\phi$ on $H^*(T^2\times S^2;\ZZ)$ is trivial.
\end{lemma}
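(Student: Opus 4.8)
The plan is to reduce everything to the degree-two part, which is the crux, and then dispatch the remaining degrees. Write $X=T^2\times S^2$ and recall the classes $\kappa_{T^2},\kappa_{S^2}\in H_2(X;\ZZ)$. Since $\phi$ is a symplectomorphism we have $\phi^*\omega=\omega$, hence $\phi^*[\omega]=[\omega]$, and because $\omega\wedge\omega$ is a volume form $\phi$ preserves the orientation of $X$. Consequently $\phi_*$ preserves the intersection pairing on $H_2(X;\RR)\cong\RR^2$, which in the basis $\kappa_{T^2},\kappa_{S^2}$ is hyperbolic; so $\phi_*$ is represented by an element of $\O(1,1)$.

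Next I would pin down $\phi_*\kappa_{S^2}$ exactly as in the proof of Proposition \ref{prop:equivariant-fibration}. Since $\pi_2(T^2)=0$ we have $\pi_2(X)\cong\pi_2(S^2)\cong\ZZ$, and the Hurewicz image of a generator is $\pm\kappa_{S^2}$, so any diffeomorphism sends $\kappa_{S^2}$ to $\pm\kappa_{S^2}$. To fix the sign I pair with $[\omega]$: from $\la\phi^*[\omega],c\ra=\la[\omega],\phi_*c\ra$ and $\phi^*[\omega]=[\omega]$ one gets $\la[\omega],\phi_*\kappa_{S^2}\ra=\la[\omega],\kappa_{S^2}\ra=\beta(\omega)\neq 0$, forcing $\phi_*\kappa_{S^2}=\kappa_{S^2}$. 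Now $\phi_*\in\O(1,1)$ fixes the nonzero isotropic vector $\kappa_{S^2}$, so Lemma \ref{lemma:O-1-1} gives $\phi_*=\Id$ on $H_2(X;\RR)$; being a lattice automorphism it is the identity on $H_2(X;\ZZ)$, and dually $\phi^*=\Id$ on $H^2(X;\ZZ)$. Degrees $0$ and $4$ are immediate from connectedness and orientation-preservation.

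It remains to treat the odd degrees, and by Poincaré duality the degree-three case is equivalent to the degree-one case, so it suffices to analyse $H^1(X;\ZZ)=H^1(T^2;\ZZ)\cong\ZZ^2$. Here I would use that $\phi^*$ is a ring automorphism fixing $H^2$: choosing a basis $a,b$ of $H^1$ with $a\cup b$ generating the $H^2(T^2)$-summand, the identity $\phi^*a\cup\phi^*b=a\cup b$ confines $\phi^*|_{H^1}$ to $\SL(2,\ZZ)$. I expect this last point to be the main obstacle. The cup-product structure only places the action in $\SL(2,\ZZ)$, and the cohomology class $[\omega]$ records nothing about $H^1$; moreover the linear symplectomorphisms $A\times\Id$ with $A\in\SL(2,\ZZ)$ act on $H^1$ precisely through $A$. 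Thus the degree-one statement cannot be extracted from cohomological data alone, and completing it is exactly the step where one must bring in finer, non-cohomological information about $\phi$ (or restrict to the situations, such as the finite-order actions considered elsewhere in the paper, where the earlier fibration and averaging techniques apply); this is where I would concentrate the remaining work.
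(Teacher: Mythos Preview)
Your treatment of $H_2$ (and of degrees $0$ and $4$) coincides with the paper's proof: the paper also uses $\pi_2(T^2)=0$ to get $\phi_*\kappa_{S^2}=\lambda\kappa_{S^2}$, pairs with $[\omega]$ to force $\lambda=1$, and then invokes Lemma~\ref{lemma:O-1-1}.

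On $H^1$ you have not merely encountered an obstacle in your own argument; you have located an actual defect in the statement. The paper's proof opens with ``It suffices to prove that the action of $\phi$ on $H_2(T^2\times S^2;\RR)$ is trivial'' and never returns to the odd degrees, but this reduction is unjustified and indeed false. Your counterexample is decisive: for any $A\in\SL(2,\ZZ)\setminus\{\Id\}$ the map $A\times\Id$ is a symplectomorphism of $\alpha\,\omega_{T^2}+\beta\,\omega_{S^2}$ (with $\omega_{T^2}=dx\wedge dy$) acting on $H^1$ by $A$, and by Theorem~\ref{thm:LM} every symplectic form on $T^2\times S^2$ is isomorphic to such a product. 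In fact the very group $\wh{\Gamma}_n$ constructed in Section~\ref{s:proof-thm:main-2} acts nontrivially on $H^1$ through the order-$6$ element $h_n$. What both you and the paper actually establish --- triviality on $H^{\mathrm{even}}$ --- is already enough to place the action inside $\Diff_{[\omega]}(T^2\times S^2)$, which is all that Corollary~\ref{cor:grups-que-no-actuen} requires; if one wants the literal ``trivial on all of $H^*$'' claim in Theorem~\ref{thm:main-2}, one should use the subgroup $\Gamma_n$ (which acts on $T^2$ by translations, hence trivially on $H^1$) in place of $\wh{\Gamma}_n$, at the cost of the weaker bound $[\Gamma:A]\geq\lambda(\omega)$.
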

\begin{proof}
It suffices to prove that the action of $\phi$ on $H_2(T^2\times S^2;\RR)$ is trivial.
Let $\kappa_{S^2},\kappa_{T^2}\in H_2(T^2\times S^2;\ZZ)$ be as before the classes represented
by $\{t\}\times S^2$ and $T^2\times\{s\}$ respectively, for any $t\in T^2$ and $s\in S^2$.
Since $\pi_2(T^2)$ is trivial, $(\Pi\circ\phi)_*\kappa_{S^2}=0$, so $\phi_*\kappa_{S^2}=\lambda\kappa_{S^2}$ for some $\lambda\in\ZZ$. Since $[\omega]=\alpha(\omega)[\omega_{T^2}]+\beta(\omega)[\omega_{S^2}]$ with $\beta(\omega)\neq 0$
and $\omega_{T^2}$ pairs trivially with $\kappa_{S^2}$, we have $\lambda=1$.
The proof is finished using Lemma \ref{lemma:O-1-1} and the arguments preceding it.
%
\end{proof}

\section{Proof of Theorem \ref{thm:main-p}}
\label{s:proof-thm:main-p} We first prove that if $\omega$ is a
symplectic form on $T^2\times S^2$, $p>3$ is a prime such that
$2p>\lambda(\omega)$, and $\Gamma\subset\Symp(T^2\times
S^2,\omega)$ is a finite $p$-group, then $\Gamma$ is abelian.
This follows from the same arguments as in the proof of Theorem
\ref{thm:main-1}. The difference with the general situation
considered in Theorem \ref{thm:main-1} is that when applying
Lemmas \ref{lemma:esfera} and \ref{lemma:tor} to a $p$-group
$H$ with $p>3$, the subgroup $H'$ whose existence is claimed
turns out to be $H$ itself in both lemmas. When we apply
Proposition \ref{prop:accions-en-fibracions} during the proof
of Theorem \ref{thm:main-1} there are three possible outcomes,
which in the context of a finite $p$-group $\Gamma$ (with $p>3$
and $2p>\lambda(\omega)$) simplify as follows. If
$\Gamma_S=\{1\}$ then the abelian subgroup $A\subseteq\Gamma$
which is constructed turns out to be $\Gamma$ itself, so
$\Gamma$ is abelian. In the two other cases, the group
$\Gamma_0$ coincides with $\Gamma$, and similarly $\Gamma_1$ is
also equal to $\Gamma$.
The proof that $\Gamma$ is abelian is completed by observing
that, in Proposition \ref{prop:accions-fibrats-linia}, if
$\Gamma$ is a $p$-group ($p>3$), $\deg L$ is even, and $2p>\deg
L$, then $\Gamma_{\ab}=\Gamma$. To justify this, first note
that it suffices to consider the second statement (again
because in Lemma \ref{lemma:tor} for a $p$-group $H$, $p>3$,
the subgroup $H'$ coincides with $H$). The fact that $\deg L$
is even and $2p>\deg L$ implies that $p$ does not divide $\deg
L$. This implies, using Lemma \ref{lemma:grau-divisible}, that
$|\Gamma_B|$ divides $d_c=|[\Gamma,\Gamma]|$. In particular
$|\Gamma_B|\leq |[\Gamma,\Gamma]|$. By (2) in Lemma
\ref{lemma:propietats-Q} this implies that $\Gamma_B$ is
cyclic, because the exponent of $[\Gamma,\Gamma]$ is not
greater than the exponent of $\Gamma_B$, and $[\Gamma,\Gamma]$
is cyclic. Then (1) in Lemma \ref{lemma:propietats-Q} tells us
that $\Gamma$ is abelian.

Now suppose that $p>3$ is prime and that
$2p\leq\lambda(\omega)$. By the arguments in the proof of
Theorem \ref{thm:main-2} (see Subsection
\ref{ss:proof-thm:main-2}) there is a subgroup of
$\Symp(T^2\times S^2,\omega)$ isomorphic to $\Gamma_{2p}$. The group $\Gamma_p$
is isomorphic to
$$\la X,Y,Z\mid X^p=Y^p=Z^p=[X,Z]=[Y,Z]=1,\,[X,Y]=Z\ra,$$
so it suffices to prove that $\Gamma_{2p}$ has a subgroup
isomorphic to $\Gamma_p$. The map
$$d:T(\ZZ,\ZZ)\to T(\ZZ,\ZZ),\quad d(A(x,y,z))=A(2x,2y,4z)$$
is an injective morphism of groups and
$d^{-1}(T(\ZZ,2p\ZZ))=T(\ZZ,p\ZZ)$. Hence, $d$ gives an
injection
$$\Gamma_p=T(\ZZ,\ZZ)/T(\ZZ,p\ZZ)\hookrightarrow
T(\ZZ,\ZZ)/T(\ZZ,2p\ZZ)=\Gamma_{2p}$$ (in fact, computing
cardinals it is clear that we can identify the image of this
map with a $p$-Sylow subgroup of $\Gamma_{2p}$).

\section{Proof of Corollary \ref{cor:main}}
\label{s:proof-cor:main}

 Let $(M,\omega)$ be a symplectic manifold diffeomorphic to an $S^2$-fibration
over a compact Riemann surface $\Sigma$. If $\chi(\Sigma)\neq 0$ then $\chi(M)\neq 0$,
so by the main result in \cite{M2} the diffeomorphism group of $M$ is Jordan.
A fortiori, so is $\Symp(M,\omega)$. The only case not covered by \cite{M2} is precisely when
$\Sigma=T^2$. In this case, $M$ is either the trivial fibration $T^2\times S^2$ or a twisted
fibration. In the first case Theorem \ref{thm:main-1} applies. In the second case, we can consider a degree $2$ unramified covering
$\mu:T^2\to T^2$ and take the pullback $\mu^*M\to T^2$ of the fibration $M\to T^2$.
There is a degree $2$ unramified covering
$\nu:\mu^*M\to M$. Then $\mu^*M\simeq T^2\times S^2$, so $\Symp(\mu^*M,\nu^*\omega)$ is
Jordan by Theorem \ref{thm:main-1}, and the arguments in \cite[\S 2.3]{M1} imply, using $\nu$,
that $\Symp(M,\omega)$ is also Jordan.

Suppose now that $(M,\omega)$ is a symplectic manifold with $M$ diffeomorphic to
the product of two Riemann surfaces of genuses $g$ and $h$.
If $\chi(M)\neq 0$ then \cite{M2} implies as before that $\Symp(M,\omega)$
is Jordan. Now suppose that $\chi(M)=0$. Then $1\in\{g,h\}$, so suppose that $g=1$.
If $h=0$ then $M\simeq T^2\times S^2$, so by Theorem \ref{thm:main-1}
$\Symp(M,\omega)$ is Jordan. Finally, if $h\geq 1$ then one may find
cohomology classes $\alpha_1,\dots,\alpha_4\in H^1(M;\ZZ)$ such that
$\alpha_1\cup\dots\cup\alpha_4\neq 0$, so by \cite{M1} the diffeomorphism group
of $M$ is Jordan. Consequently, $\Symp(M,\omega)$ is Jordan in this case as well.

\appendix

\section{Automorphisms of Heisenberg groups and geometry}
Here we interpret geometrically the automorphism
$h\in\Aut(T(\RR,\RR))$ of \S\ref{ss:Z6-symmetry} in terms of iterated integrals
and the monodromy of certain fibration over the moduli space of elliptic curves. 
(See \cite{Os} for a group theoretical approach to
$\Aut(T(\ZZ,\ZZ))$ and \cite{OP} for an approach based on 
two dimensional local fields.)

It is easy to prove that
any automorphism of $\Gamma_{\RR}=T(\RR,\RR)$ lifting the automorphism of
$\Gamma_{\RR}/[\Gamma_{\RR},\Gamma_{\RR}]\simeq\RR^2$
given by $(x,y)\mapsto (-y,x+y)$ has to coincide with $h$ up to adding
linear combinations $\alpha x+\beta y$ to the third term, and all such lifts
have order $6$.
Varying $\alpha$ and $\beta$ corresponds to the action on $\Aut(\Gamma_{\RR})$
of the inner automorphisms of $\Gamma_{\RR}$, which act trivially on
$\Gamma_{\RR}/[\Gamma_{\RR},\Gamma_{\RR}]$.
In particular, the automorphism
$h'$ of $T(\RR,\RR)$ defined as
$$h'(A(x,y,z))=A\left(-y,x+y,z-xy-\frac{y^2-y}{2}\right)$$
represents the same class in $\Out(\Gamma_{\RR})$ as $h$.
%
Clearly $h'$ preserves\footnote{We remark that in Subsection
\ref{ss:Z6-symmetry} we use $h$ instead of $h'$ because $h$
preserves $T(\ZZ,n\ZZ)$ for even $n$, whereas $h'$ does not
preserve $T(\ZZ,n\ZZ)$ if $n$ is even.}
$\Gamma=T(\ZZ,\ZZ)$ 
and since $\Gamma_{\RR}=\Gamma\otimes\RR$ as nilpotent groups, one can recover $h'$ from
its restriction to $\Gamma$. The latter
belongs to $\Aut^+(\Gamma)$, the group of automorphisms of $\Gamma$ acting trivially
on $Z(\Gamma)\simeq\ZZ$.

Let $\Int^+(\Gamma)\subset\Aut^+(\Gamma)$ be the inner automorphisms.
The outer automorphism group $\Out^+(\Gamma)=\Aut^+(\Gamma)/\Int^+(\Gamma)$ maps to $\SL(2,\ZZ)$ via its action on
$\Gamma/[\Gamma,\Gamma]$, and one can prove easily that this morphism
$\eta:\Out^+(\Gamma)\to \SL(2,\ZZ)$ is injective. To prove that $\eta$ is also surjective,
consider a principal $S^1$-bundle $p:M\to T^2$ of degree $1$. For any $x_0\in M$ we have
$\pi_1(M,x_0)\simeq\Gamma$. Let
$F
\in\SL(2,\ZZ)$ and let $\phi:T^2\to T^2$
be a 
diffeomorphism whose mapping class coincides with $F$. Since
$\det F=1$, $\phi$ acts trivially on $H^2(T^2)$ and hence
admits a lift $\psi:M\to M$ which is a principal bundle
automorphism. Then $\psi$ defines an element
$\psi_*\in\Out^+(\Gamma)$ which only depends on $F$ and which
is mapped to $F$ by $\eta$. Therefore $\eta$ is surjective.

The preceding construction allows us to interpret
the quadratic terms in $h$ and $h'$ in terms of
Chen's iterated integrals (see e.g. \cite{H}). Let $u,v$ denote the standard coordinates in $\RR^2$ and let $du,dv$ be the induced $1$-forms in $T^2=\RR^2/\ZZ^2$. Denote also
for simplicity by $du,dv$ the  pullbacks to $M$.
Let $\alpha\in\Omega^1(M,\imag\RR)$ be a connection form whose curvature $d\alpha$ is equal to
$-2\pi \imag du\wedge dv$. For any smooth loop $\gamma$ in $M$ define
$$x(\gamma)=\int_{\gamma}du,\quad
y(\gamma)=\int_{\gamma}dv,\quad
z(\gamma)=\frac{-\imag}{2\pi}\int_{\gamma}\alpha+\int_{\gamma}
du\,dv.$$
$z(\gamma)$ 
is a homotopy functional by \cite[Proposition 3.1]{H}, i.e. it
only depends on the homotopy class of $\gamma$. One proves
easily that
$$\pi_1(M,x_0)\ni\gamma\mapsto A(x(\gamma),y(\gamma),z(\gamma))\in\Gamma$$
is an isomorphism of groups. Choose a lift $f\in\Aut^+(\Gamma)$ of $\psi_*$. We have
$f(A(x,y,z))=A(x',y',z+g(x,y,z)),$
where $(x',y')=F(x,y)$. Our aim is to compute $g(x,y,z)$ up to linear terms.
Suppose that $x=x(\gamma)$, $y=y(\gamma)$ and $z=z(\gamma)$. We have
\begin{align*}
g(x,y,z)
&=\frac{-\imag}{2\pi}\int_{\gamma}(\psi^*\alpha-\alpha)+\int_{\gamma} (\psi^*du\,\psi^*dv-du\,dv).
\end{align*}
Take $\phi:T^2\to T^2$ to be the map induced by the linear
transformation $F:\RR^2\to\RR^2$. Then $\phi^*(du\wedge
dv)=du\wedge dv$ because $\det F=1$, so
$\psi^*d\alpha=d\alpha$. Hence $\psi^*\alpha-\alpha$ is a
closed $1$-form, so its contribution to $g(x,y,z)$ is a linear
term on $x,y$ (closed $1$-forms only {\it see}
$\Gamma/[\Gamma,\Gamma]=H_1(M)=H_1(T)$). It follows that the
integral involving $\psi^*du\,\psi^*dv-du\,dv$ is a homotopy
functional which coincides with $g$ up to linear terms. By
naturality we have $\int_{\gamma}
(\psi^*du\,\psi^*dv-du\,dv)=\int_{p\circ \gamma}
(\psi^*du\,\psi^*dv-du\,dv),$ and since this is a homotopy
functional it only depends on the homotopy class of $\gamma$.
In particular we may assume that $\gamma$ comes from a linear
map $\RR\ni t\mapsto (\lambda t,\mu t)\in\RR^2$. Then
$\lambda=\int_{\gamma}du=x$, $\mu=\int_{\gamma}dv=y$,
and if $F=\left(\begin{array}{cc}\alpha & \beta \\ \delta & \epsilon \end{array}\right)$
a straightforward computation gives
$$\int_{p\circ \gamma} (\psi^*du\,\psi^*dv-du\,dv)=
\frac{\alpha\delta x^2+(\alpha\epsilon+\beta\delta-1)xy+\beta\epsilon y^2}{2}.$$
Taking $\alpha=0$, $\beta=-1$, $\delta=1$ and $\epsilon=1$ we obtain the quadratic terms in $h$ and $h'$.

We close this appendix constructing a morphism of groups
$\xi:\SL(2,\ZZ)\to\Aut^+(\Gamma)$ which is a section of
$\Aut^+(\Gamma)\to\Out^+(\Gamma)\simeq\SL(2,\ZZ)$. This gives a
conceptual explanation of the existence of elements of
$\Aut(\Gamma)$ of order $6$ (such as $h'$). Let $\mM=\mM_{1,1}$
be the moduli orbifold/stack of elliptic curves over $\CC$, let
$p:\cC\to\mM$ be the universal curve (all bundles here are to
be understood in the orbifold/stack sense), let
$\sigma:\mM\to\cC$ be the section corresponding to the marked
point, let $\dD=\sigma(\mM)$, let $\lL=\oO(\dD)\to\cC$, and let
$\lambda\in H^0(\lL)$ be a section transverse to the zero
section and satisfying $\lambda^{-1}(0)=\dD$. Let
$\tT=\sigma^*T^{\vert}\cC\to\mM$, where $T^{\vert}\cC$ is the
vertical tangent bundle of $\cC$. Let $\Lambda=\lL\otimes
p^*\tT^*$ and let $\Lambda^*\subset\Lambda$ be the
complementary of the zero section. Since $\lambda$ vanishes
transversely along $\dD$, its derivative defines a nonvanishing
section of $\Hom(\tT,\sigma^*\lL)$, which can be interpreted as
a section $b:\mM\to\Lambda^*$ lifting $\sigma$. Let
$r:\Lambda^*\to\mM$ be the projection and let $e_0\in\mM$ be
any point. We have $\pi_1(r^{-1}(e_0),b(e_0))\simeq\Gamma$,
and, thanks to the existence of the section $b$, the monodromy
defines a map $\SL(2,\ZZ)\simeq
\pi_1^{\orb}(\mM)\to\Aut^+(r^{-1}(e_0),b(e_0))\simeq\Aut^+(\Gamma)$
which is the desired morphism $\xi$.


\begin{thebibliography}{ABCD}\frenchspacing\smallbreak

\bibitem{AM}
M. Abreu, D. McDuff,
Topology of symplectomorphism groups of rational ruled surfaces,
{\em J. Amer. Math. Soc.} {\bf 13} (2000), no. 4, 971--1009.

\bibitem{BGV}
N. Berline, E. Getzler, M. Vergne, {\em Heat kernels and Dirac operators}, Corrected reprint of the 1992 original, Grundlehren Text Editions, Springer-Verlag, Berlin, 2004.

\bibitem{B} E. Breuillard, An exposition of Jordan's original
    proof of his theorem on finite subgroups of $\GL_n(\CC)$,
    preprint available at {\tt
    http://www.math.u-psud.fr/\~\,breuilla/Jordan.pdf}.

\bibitem{Ca}
A. Cannas da Silva,
{\em Lectures on Symplectic Geometry},
Lecture Notes in Mathematics {\bf 1764}, Springer-Verlag, 2008.


\bibitem{C}
W. Chen,
On the orders of periodic diffeomorphisms of 4-manifolds,
{\em Duke Math. J.} {\bf 156} (2011), no. 2, 273--310.


\bibitem{CK1}
W. Chen, S. Kwasik,
Symplectic symmetries of 4-manifolds,
{\em Topology} {\bf 46} (2007), no. 2, 103--128.


\bibitem{CK2}
W. Chen, S. Kwasik,
Symmetric symplectic homotopy K3 surfaces,
{\em J. Topol.} {\bf 4} (2011), no. 2, 406--430.

\bibitem{CPS} B. Csik\'os, L. Pyber, E. Szab\'o, Diffeomorphism groups of compact
$4$-manifolds are not always Jordan, preprint {\tt arXiv:1411.7524}.

\bibitem{CR}
C.W. Curtis, I. Reiner, Representation Theory of Finite Groups and Associative Algebras,
reprint of the 1962 original, AMS Chelsea Publishing, Providence, RI (2006).

\bibitem{Do} I. Dolgachev, {\em McKay correspondence}, 2009, in
    {\tt http://www.math.lsa.umich.edu/\~{}idolga/}

\bibitem{E}
C. Ehresmann, Les connexions infinit\'esimales dans un espace fibr\'e diff\'erentiable.
{\em Colloque de Topologie}, Bruxelles, 1950, pp. 29--55.

\bibitem{FK} H.M. Farkas, I. Kra, {\em Riemann Surfaces}, 2nd
    edition, Graduate Texts in Mathematics {\bf 71}, Springer
    (1992).


\bibitem{F} D. Fisher, Groups acting on manifolds: around the
    Zimmer program, {\em Geometry, rigidity, and group
    actions}, 72–157, Chicago Lectures in Math., Univ. Chicago
    Press, Chicago, IL (2011), {\tt arXiv:0809.4849}.

\bibitem{H} R.M. Hain,
The geometry of the mixed Hodge structure on the fundamental group,
in {\em Algebraic geometry, Bowdoin, 1985} (Brunswick, Maine, 1985), 247--282, Proc. Sympos. Pure Math., {\bf 46} Part 2, Amer. Math. Soc., Providence, RI, 1987.



\bibitem{HLS} H. Hoefer, V. Lizan, J.-C. Sikorav, On genericity
    for holomorphic curves in 4-dimensional almost-complex
    manifolds, {\em J. Geom. Anal.} {\bf 7} (1997) 149--159.

\bibitem{J} C. Jordan, M\'emoire sur les \'equations
    diff\'erentielles  lin\'eaires \`a int\'egrale
    alg\'ebrique, {\em J. Reine Angew. Math.} {\bf 84} (1878)
    89--215.

\bibitem{LM} F. Lalonde, D. McDuff, The classification of ruled
    symplectic 4-manifolds, {\em Math. Res. Lett.} {\bf 3}
    (1996), no. 6, 769–778.

\bibitem{LM2} F. Lalonde, D. McDuff, $J$-curves in symplectic
    $4$-manifolds and the classification of rational and ruled
    manifolds, Proc. 1994 Symplectic topology program, Newton
    Institute, ed S. Donaldson and C. Thomas, Cambridge
    University Press, 1996.

\bibitem{McD} D. McDuff, The structure of rational and ruled
    symplectic $4$-manifolds, {\em J. Amer. Math. Soc.} {\bf 3} (1990),
    679--712.

\bibitem{MS} D. McDuff and D. Salamon, {\em Introduction to
    Symplectic Topology}, Oxford University Press, 1995, 2nd edition 1998.

\bibitem{M1} I. Mundet i Riera, Jordan's theorem for the
    diffeomorphism group of some manifolds, Proc. AMS {\bf 138}
    (2010) 2253--2262.


\bibitem{M2} I. Mundet i Riera, Finite group actions on
    4-manifolds with nonzero Euler characteristic, {\em Math. Zeit.} {\bf 282}
    (1) (2016) 25--42, DOI 10.1007/s00209-015-1530-8, preprint
    {\tt arXiv:1312.3149}.


\bibitem{M3} I. Mundet i Riera,
    Finite group actions on homology spheres and manifolds
    with nonzero Euler characteristic, preprint {\tt
    arXiv:1403.0383}.


\bibitem{M4} I. Mundet i Riera,
Non Jordan groups of diffeomorphisms and actions of compact Lie groups on manifolds, to appear
in {\em Transformation Groups},
preprint {\tt arXiv:1412.6964}.

\bibitem{Os} D.V. Osipov,
The discrete Heisenberg group and its automorphism group,
{\em Mathematical Notes} {\bf 98} (1) (2015) 185--188, preprint {\tt arXiv:1505.00348v1}.

\bibitem{OP} D.V. Osipov, A.N. Parshin,
Representations of the discrete Heisenberg group on distribution spaces of two-dimensional local fields,
{\it preprint} {\tt arXiv:1510.02423}.

\bibitem{Po0} V.L. Popov, On the Makar-Limanov, Derksen
    invariants, and finite automorphism groups of algebraic
    varieties. In {\em Peter Russell's Festschrift, Proceedings of
    the conference on Affine Algebraic Geometry held in
    Professor Russell's honour}, 1–5 June 2009, McGill Univ.,
    Montreal., volume 54 of Centre de Recherches Math\'ematiques
    CRM Proc. and Lect. Notes,  289--311, 2011.

\bibitem{Po2} V.L. Popov, Jordan groups and automorphism groups
    of algebraic varieties, in Cheltsov et al. (ed.),
    {\em Automorphisms in Birational and Affine Geometry},
    Springer Proceedings in Mathematics and Statistics {\bf
    79},
    Springer (2014), 185--213.

\bibitem{Z}
Y.G. Zarhin, Theta groups and products of abelian and rational varieties,
{\em Proc. Edinb. Math. Soc.} (2) {\bf 57} (2014) 299--304.

\end{thebibliography}
\end{document}